\newcommand{\R}{\mathbb{R}}
\newcommand{\N}{\mathbb{N}}
\newcommand{\K}{\mathcal{K}}
\newcommand{\conv}{\operatorname{conv}}
\newcommand{\pos}{\operatorname{pos}}
\newcommand{\interior}{\operatorname{int}}
\newcommand{\va}{\bm{a}}
\newcommand{\vb}{\bm{b}}
\newcommand{\vc}{\bm{c}}
\newcommand{\vd}{\bm{d}}
\newcommand{\vn}{\bm{n}}
\newcommand{\vo}{\bm{o}}
\newcommand{\vp}{\bm{p}}
\newcommand{\vq}{\bm{q}}
\newcommand{\vr}{\bm{r}}
\newcommand{\vu}{\bm{u}}
\newcommand{\vx}{\bm{x}}
\newcommand{\vy}{\bm{y}}
\newcommand{\vz}{\bm{z}}
\newcommand{\vxi}{\bm{\xi}}
\newcommand{\checkK}{\check{\mathcal{K}}}
\theoremstyle{plain}
\newtheorem{theorem}{Theorem}[section]
\newtheorem{lemma}[theorem]{Lemma}
\newtheorem{proposition}[theorem]{Proposition}
\newtheorem{corollary}[theorem]{Corollary}
\theoremstyle{definition}
\newtheorem{remark}[theorem]{Remark}
\newtheorem*{conjecture}{Conjecture}
\newtheorem*{problem}{Problem}
\newcommand{\gray}[1]{{\color{gray}#1}}
\renewcommand{\Diamond}{\includegraphics[width=0.8em]{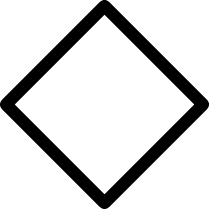}}
\newcommand{\Pentagon}{\includegraphics[width=0.8em]{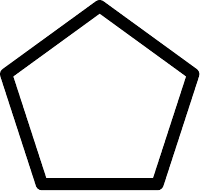}}
\begin{document}

\title{Minimal volume product of three dimensional convex bodies with various discrete symmetries}

\author{Hiroshi Iriyeh
\thanks{Graduate School of Science and Engineering, Ibaraki University, \newline \qquad 
e-mail: hiroshi.irie.math@vc.ibaraki.ac.jp}
\and 
Masataka Shibata
\thanks{Department of Mathematics, Tokyo Institute of Technology, \newline \qquad 
e-mail: shibata@math.titech.ac.jp}
}

\maketitle

\begin{abstract}
We give the sharp lower bound of the volume product of three dimensional convex bodies which are
invariant under a discrete subgroup of $O(3)$ in several cases.
We also characterize the convex bodies with the minimal volume product in each case.
In particular, this provides a new partial result of the non-symmetric version of Mahler's conjecture
in the three dimensional case.
\end{abstract}

\section{Introduction and main results}

\subsection{Mahler's conjecture and its generalization}

Let $K$ be a {\it convex body} in $\R^n$, i.e, $K$ is a compact convex set in $\R^n$ with nonempty interior. 
Then $K \subset \R^n$ is said to be {\it centrally symmetric} if it satisfies that $K=-K$.
Denote by $\K^n$ the set of all convex bodies in $\R^n$ equipped with the Hausdorff metric and by $\K^n_0$ the set of all $K \in \K^n$ which are centrally symmetric.

The interior of $K \in \K^n$ is denoted by $\interior K$.
For a point $z \in \interior K$, the {\it polar body} of $K$ with respect to $z$ is defined by
\begin{equation*}
K^z =
\left\{
y \in \R^n;
(y-z) \cdot (x-z) \leq 1 \text{ for any } x \in K
\right\},
\end{equation*}
where $\cdot$ denotes the standard inner product on $\R^n$.
Then an affine invariant
\begin{equation}
\label{eq:a}
 \mathcal{P}(K) := \min_{z \in \interior K}|K| \, |K^z|
\end{equation}
is called {\it volume product} of $K$, where $|K|$ denotes the $n$-dimensional volume of $K$ in $\R^n$.
It is known that for each $K \in \K^n$ the minimum of \eqref{eq:a} is attained at the unique point $z$ on $K$, which is called {\it Santal\'o point} of $K$ (see, e.g., \cite{MP}).
For a centrally symmetric convex body $K \in \K^n_0$, the Santal\'o point of $K$ is the origin $o$.
See \cite{Sc}*{p.\,546} for further information.
In the following, the polar of $K$ with respect to $o$ is denoted by $K^\circ$.

Mahler's conjecture \cite{Ma57} states that for any $K \in \K^n_0$,
\begin{equation}
\label{eq:b}
 \mathcal{P}(K) \geq  \frac{4^n}{n!}
\end{equation}
would be hold.
He proved it in the case where $n=2$ (\cite{Ma59}).
The three dimensional case recently proved in \cite{IS} (see also \cite{FHMRZ} for a nice simple proof of an equipartition result used in \cite{IS}).
Although the case that $n\geq4$ is still open, $n$-cube or, more generally, Hanner polytopes satisfy the equality in \eqref{eq:b} and are predicted as the minimizers of $\mathcal{P}$.
As for non-symmetric bodies there is another well-known conjecture for the lower bound of the volume product as follows.

\begin{conjecture}
Any $K \in \K^n$
satisfies that
\begin{equation*}
\mathcal{P}(K) \geq \frac{(n+1)^{n+1}}{(n!)^2}
\end{equation*}
with equality if and only if $K$ is a simplex.
\end{conjecture}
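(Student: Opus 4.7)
The plan is to reduce the conjecture to polytopes and attack it by induction on the vertex count. Since $\mathcal{P}\colon \K^n \to \R$ is continuous in the Hausdorff metric and polytopes are Hausdorff-dense in $\K^n$, it suffices to establish the inequality, together with the equality characterization, for convex polytopes; a small generic perturbation of the vertices further allows us to assume that $P$ is simplicial, with the equality case for general $K$ recovered by continuity. The base case is $P$ a simplex (so $N = n+1$ vertices), for which a direct computation starting from the regular simplex centered at the origin — the Santaló point coincides with the centroid by the symmetry group $S_{n+1}$, and the polar is again a scaled regular simplex — gives $\mathcal{P}(P) = (n+1)^{n+1}/(n!)^2$.

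For the inductive step, let $P$ be a simplicial polytope with $N \geq n+2$ vertices, choose a vertex $v$, and set $P' := \conv(\mathrm{vertices}(P) \setminus \{v\})$. Decomposing $P = P' \cup C_v$, where $C_v$ is the cone with apex $v$ over those facets of $P'$ visible from $v$, and dually decomposing $P^{z(P)}$ as $(P')^{z(P)}$ cut by the new half-space dual to $v$, one obtains explicit integral formulae for $|P| - |P'|$ and $|(P')^{z(P)}| - |P^{z(P)}|$. The aim is to establish
\begin{equation*}
  \mathcal{P}(P) \geq \mathcal{P}(P'),
\end{equation*}
with strict inequality whenever removing $v$ changes the combinatorial type, so that iteration terminates at a simplex. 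To handle the unknown location of the Santaló point $z(P)$, I would use its variational characterization (the centroid of $K^{z(K)}$ is $z(K)$) to recast $\mathcal{P}(P) - \mathcal{P}(P')$ as a single integral whose sign is dictated by a convexity property of the support function of $P'$ at $v$.

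The chief obstacle, and the reason the conjecture has stood open for $n \geq 3$, is precisely that the Santaló point $z(K)$ depends nonlinearly on $K$: the perturbation collapsing $P$ onto $P'$ also translates $z$ in a way that couples back into both factors of $|K|\,|K^{z(K)}|$. In the centrally symmetric setting, symmetry pins $z(K) = o$, which is what makes Mahler's planar proof and the three dimensional argument of \cite{IS} tractable; no such pinning is available here. A secondary difficulty is the equality characterization: existing lower bounds of the form $\mathcal{P}(K) \geq c^n / n!$ (Bourgain--Milman and Kuperberg) capture the correct asymptotic order $1/n!$ but miss the sharp constant, indicating that any successful proof must exploit the simplicial geometry very finely rather than through isomorphic reverse Santaló methods. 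I would therefore expect the decisive technical step to be a new sharp integral identity at the Santaló point producing a strictly convex one-parameter functional along vertex deformations, with the simplex as its unique critical point — and this is the step I am least confident can be achieved by the present toolkit.
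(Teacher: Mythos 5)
The statement you are trying to prove is stated in the paper as a \emph{conjecture}, explicitly noted there to be open for $n \geq 3$ (it is known only for $n=2$, by Mahler and Meyer); the paper itself offers no proof, and its main theorem establishes only a special case, namely the inequality $\mathcal{P}(K)\geq\mathcal{P}(\triangle)$ for $T$-invariant bodies $K\subset\R^3$. So there is no ``paper's own proof'' to compare against, and your proposal must be judged on its own terms --- and on those terms it is not a proof. The reduction to simplicial polytopes by density and continuity is fine for the inequality (though one must be careful that a generic perturbation can only be used for the inequality, not directly for the equality characterization, since continuity alone cannot rule out non-polytopal minimizers without a separate stability or local-minimality argument). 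The base case computation for the simplex is also correct. But the entire content of the conjecture is concentrated in your inductive step, the claim that $\mathcal{P}(P)\geq\mathcal{P}(P')$ when a vertex is deleted, and you give no argument for it beyond naming the objects involved. You correctly identify why it is hard --- the Santal\'o point moves nonlinearly as the vertex is collapsed, and it enters both factors of the product --- and you candidly state that you do not know how to produce the ``sharp integral identity'' that would settle the sign of $\mathcal{P}(P)-\mathcal{P}(P')$. That missing identity is not a technical detail; it is the theorem.

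Concretely, the closest existing technology to your proposed step is the shadow-system method (Campi--Gronchi; Meyer--Reisner; and the paper \cite{FMZ} cited in the bibliography): along a shadow system $t\mapsto K_t$ the volume $|K_t|$ is affine and $t\mapsto|K_t^{s(K_t)}|^{-1}$ is convex, which lets one push a vertex to an endpoint of a segment without increasing $\mathcal{P}$ in certain configurations. This yields the conjecture only for polytopes with very few vertices and does not give the general vertex-removal monotonicity you need; the product of an affine function with the reciprocal of a convex function need not be monotone or quasi-concave, so the induction does not close. Likewise, the local minimality of $\mathcal{P}$ at the simplex (Kim--Reisner, also cited in the paper) shows your functional has the simplex as a local critical point but says nothing global. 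If you want a provable statement in the spirit of this paper, the realistic target is the symmetry-restricted version: impose invariance under a discrete subgroup $G\subset O(n)$ large enough to pin the Santal\'o point at the origin and to reduce the estimate to a fundamental cone, which is exactly the mechanism (Lemma \ref{lem:7} combined with Proposition \ref{prop:1}) by which the paper proves its partial results such as Proposition \ref{prop:5}.
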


This was proved by Mahler for $n=2$ \cite{Ma59} (see also \cite{Me}).
Note that this conjecture remains open for $n \geq 3$, see e.g., \cite{BF}, \cite{FMZ}, \cite{KR}.

On the other hand, Barthe and Fradelizi (\cite{BF}) obtained the sharp lower bound of $\mathcal{P}(K)$ when $K$ has many reflection symmetries, in other words, when $K$ is invariant under the action of a Coxeter group.
It is worthwhile to estimate the volume product of the $G$-invariant convex body $K \subset \mathbb R^n$ from below for more general discrete subgroups $G$ of the orthogonal group $O(n)$.

\begin{problem}
Let $G$ be a discrete subgroup of $O(n)$.
Denote by $\mathcal{K}^n(G)$ the set of all convex bodies $K \in \mathcal{K}^n$ which satisfy that $K = g(K)$ for any $g \in G$.
Then, consider the minimizing problem
\begin{equation*}
\min_{K \in \mathcal{K}^{n}(G)} \mathcal{P}(K)
\end{equation*}
and determine all the minimizers $K \in \mathcal{K}^n(G)$.
\end{problem}

From this viewpoint, Mahler's conjecture corresponds to the case $G=\{E, -E\} \cong {\mathbb Z}_2$ ($E$ is the identity matrix) and the non-symmetric case corresponds to $G=\{E\}$.
Following \cite{BF} for a subset $A \subset \R^n$ we recall the subgroup of $O(n)$ defined by
\begin{equation*}
O(A)=\{
g \in O(n); g(A)=A
\}.
\end{equation*}
Similarly, we can define $SO(A) \subset SO(n)$.
For example, if $P \subset \R^n$ is a regular polytope with $o$ as the centroid, then $O(P)$ is a discrete subgroup of $O(n)$.
Indeed, Problem was solved for $G=O(P)$ for every regular polytope $P \subset \R^n$ (\cite{BF}*{Theorem 1 (i)}).
Apart from the setting of \cite{BF}, we can settle the case $G=SO(P) \subset SO(n)$ as well.

In this paper, we shall focus on the three dimensional case of the above problem and consider $G$-invariant convex bodies $K \subset \mathbb R^3$ for a discrete subgroup $G$ of $O(3)$.
Note that the classification of the discrete subgroups of $O(3)$ is well-known.
Before we explain the details, we recall the two dimensional case in which the problem has already been solved.

\subsection{The two dimensional results}

For $\ell \in \N$, we put $\xi=2 \pi/\ell$ and
\begin{equation*}
R_\ell:=
\begin{pmatrix}
\cos \xi & -\sin \xi \\
\sin \xi &  \cos \xi 
\end{pmatrix}, \quad
V:=
\begin{pmatrix}
1 & 0 \\
0 & -1 \\
\end{pmatrix}.
\end{equation*}
Up to conjugation, all the discrete subgroups of $O(2)$ are
\begin{equation*}
C_\ell=\braket{R_\ell} \text{ and } D_\ell=\braket{R_\ell, V} \quad (\ell \in \N).
\end{equation*}
Note that $\mathcal{K}^2(C_1)=\mathcal{K}^2$ and $\mathcal{K}^2(C_2)=\mathcal{K}^2_0$.
We denote by $\Pentagon_\ell$ the regular $\ell$-gon with $o$ as the centroid.
For simplicity, we also denote $\triangle:=\Pentagon_3$ and $\square:=\Pentagon_4$.

The following is a summary of the known results.

\begin{theorem}[\cites{Ma59, BF, BMMR}]
\begin{enumerate}[\upshape (i)]
 \item 
$\mathcal{P}(K) \geq \mathcal{P}(\triangle)$ holds for $K \in \mathcal{K}^2(C_1)$ with equality if and only if $K$ is the image of $\triangle$ by an affine transformation of $\R^2$.
The same inequality holds for $G=D_1$.
 \item 
$\mathcal{P}(K) \geq \mathcal{P}(\square)$ holds for $K \in \mathcal{K}^2(C_2)$ with equality if and only if $K$ is the image of $\square$ by a linear transformation of $\R^2$.
The same inequality holds for $G=D_2$.
 \item 
Assume that $\ell \geq 3$. 
Then $\mathcal{P}(K) \geq \mathcal{P}(\Pentagon_\ell)$ holds for $K \in \mathcal{K}^2(C_\ell)$ with equality if and only if $K$ is a dilation or rotation of $\Pentagon_\ell$.
The same inequality holds for $G=D_\ell$.
\end{enumerate}
\end{theorem}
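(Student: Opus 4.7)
The plan is to treat the three parts separately, in each case combining Hausdorff approximation by polygons with a vertex-reduction argument, and then identifying the extremal polytope. By continuity of $\mathcal{P}$ on $\mathcal{K}^2$ with respect to the Hausdorff metric and the fact that $G$-invariance is preserved under Hausdorff limits, it suffices to prove each inequality for $G$-invariant convex polygons; the equality cases can be read off from the final step of the reduction.

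For part (i) I would follow Mahler's 1939 argument. Place $K$ so that its Santaló point is at $o$ and approximate $K$ by a convex polygon $P$ with vertices $v_1, \ldots, v_m$. Moving a single vertex $v_i$ along the line through $v_i$ parallel to the chord $v_{i-1}v_{i+1}$ is a shadow system: $|P|$ is preserved while $|P^\circ|$ depends convexly on the parameter, so $\mathcal{P}(P)$ is minimized at an extreme of the admissible interval, where either $v_i$ becomes collinear with its neighbors (and is eliminated) or two vertices coalesce. Iterating, any $K \in \mathcal{K}^2$ can be reduced without increasing $\mathcal{P}$ to a triangle, and all triangles are affinely equivalent to $\triangle$, giving $\mathcal{P}(K) \geq \mathcal{P}(\triangle)$. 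The $D_1$-case is contained in the $C_1$-case, since any affine image of $\triangle$ can be placed $D_1$-invariantly. Part (ii) is entirely analogous, with the reduction performed equivariantly on antipodal pairs of vertices so as to preserve central symmetry; the end product is a parallelogram, and all parallelograms are linearly equivalent to $\square$.

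For part (iii) the crucial new point is that $\ell \geq 3$ forces the origin to be the unique fixed point of the $C_\ell$-action, so $o \in \interior K$ and $o$ coincides with the Santaló point of any $K \in \mathcal{K}^2(C_\ell)$ by the symmetry of the Mahler integrand. I would then decompose $\R^2$ into $\ell$ congruent angular sectors meeting at $o$, on each of which $K$ (resp.\ $K^\circ$) restricts to a congruent piece of area $|K|/\ell$ (resp.\ $|K^\circ|/\ell$). Performing the vertex-reduction equivariantly—moving an entire $C_\ell$-orbit of vertices simultaneously so as to preserve the rotational symmetry—reduces any $C_\ell$-invariant polygon to one whose vertices form a single $C_\ell$-orbit, i.e.\ a (possibly irregular-looking) $\ell$-gon inscribed in a circle centred at $o$. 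A short calculation in one angular variable and one radial variable then shows that $\Pentagon_\ell$ is the unique minimum, up to dilation and rotation. The $D_\ell$-case follows from the $C_\ell$-case since $C_\ell \subset D_\ell$; alternatively it is the Coxeter-group case of Barthe--Fradelizi.

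The main obstacle will be justifying the equivariant reduction in part (iii): one must verify that moving an entire $C_\ell$-orbit of vertices simultaneously still produces a monotone or convex dependence of $|P^\circ|$ on the parameter, and that introducing additional $C_\ell$-orbits of vertices cannot push $\mathcal{P}$ below the value at $\Pentagon_\ell$. Once this is in place, the final comparison against $\Pentagon_\ell$ is an elementary one-variable extremal problem, and the equality statement is immediate.
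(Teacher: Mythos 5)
Your parts (i) and (ii) follow the classical shadow-system route of Mahler and Meyer/Reisner and are sound in outline: for (ii) the antipodal motion $v\mapsto v+tu$, $-v\mapsto -v-tu$ is still a displacement of every point along the \emph{single} direction $u$ (with speeds $\pm t$), so it is a genuine shadow system and the convexity of the polar volume applies. Be aware, though, that the equality characterizations do not simply ``read off from the final step'': one must track exactly when each reduction step is strict, which is the delicate part of \cite{Me}. The genuine gap is in part (iii), precisely where you flagged it. A $C_\ell$-equivariant motion of a vertex orbit displaces each vertex along a \emph{rotated} copy of the chosen direction; for $\ell\geq 3$ these directions are not all parallel, so the deformation is not a shadow system and the Campi--Gronchi/Meyer--Reisner convexity of $t\mapsto\lvert P_t^{s(t)}\rvert$ gives you nothing. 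No monotonicity of $\mathcal{P}$ along the deformation is established, and reducing one vertex at a time instead destroys the symmetry and drives the infimum down to $\mathcal{P}(\triangle)<\mathcal{P}(\Pentagon_\ell)$, so that escape route is closed. (Note also that a single $C_\ell$-orbit is automatically a \emph{regular} $\ell$-gon, so your final ``one angular, one radial variable'' computation would be vacuous even if the reduction worked; all of the content of (iii) is concentrated in the unproved reduction.)

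The paper's own machinery proves (iii) without any polygonal reduction. Fix $A\in\partial K$, put $B=RA$ with $R$ the rotation by $\xi=2\pi/\ell$, choose $A^\circ\in\partial K^\circ$ with $\va\cdot\va^\circ=1$ and $B^\circ=RA^\circ$, and set $L=K\cap\pos(A,B)$, $L^\circ=K^\circ\cap\pos(A^\circ,B^\circ)$. The signed-area inequality of Lemma \ref{lem:1} produces the test point $\frac{1}{2|L|}(b_2-a_2,\,a_1-b_1)\in K^\circ$ and its dual counterpart in $K$, and pairing them yields $|L|\,|L^\circ|\geq(1-\cos\xi)/2$ (Lemma \ref{lem:2}); since the $\ell$ sectors tile $K$ and $K^\circ$ into congruent pieces of areas $|K|/\ell$ and $|K^\circ|/\ell$, multiplying by $\ell^2$ gives $\mathcal{P}(K)\geq\ell^2(1-\cos\xi)/2=\mathcal{P}(\Pentagon_\ell)$ in two lines, and Lemma \ref{lem:3} delivers the equality case. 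If you want a self-contained argument for (iii), replace the equivariant reduction by this sector-by-sector test-point estimate.
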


\subsection{The three dimensional results}
\label{sec:1.3}

We shall state known results and the main theorem in the case where $n=3$.
Let us first recall the list of all discrete subgroups of $O(3)$ by using Schoenflies' notation.
We equip $\R^3$ with the usual orthogonal $xyz$-axies and put
\begin{equation*}
R_\ell:=
\begin{pmatrix}
 \cos \xi & - \sin \xi & 0 \\
 \sin \xi & \cos \xi & 0 \\
 0 & 0 & 1
\end{pmatrix}, 
V:=
\begin{pmatrix}
 1 &  0 & 0 \\
 0 & -1 & 0 \\
 0 &  0 & 1
\end{pmatrix}, 
H:=
\begin{pmatrix}
 1 & 0 &  0 \\
 0 & 1 &  0 \\
 0 & 0 & -1
\end{pmatrix},
\end{equation*}
where $\ell \in \N$, $\xi:=2\pi/\ell$.
Here $R_\ell$, $V$, and $H$ mean the rotation through the angle $2\pi/\ell$ about the $z$-axis, the reflection with respect to the $zx$-plane, and the reflection with respect to the $xy$-plane, respectively.
It is known that, up to conjugation, the discrete subgroups of $O(3)$ are classified as the seven infinite families
\begin{equation*}
\begin{aligned}
& C_\ell:=\braket{R_\ell}, \quad
C_{\ell h}:=\braket{R_\ell,H}, \quad
C_{\ell v}:=\braket{R_\ell,V}, \quad
S_{2\ell}:=\braket{R_{2\ell}H}, \\
& D_\ell:=\braket{R_\ell,VH}, \quad
D_{\ell d}:=\braket{R_{2\ell}H,V}, \quad
D_{\ell h}:=\braket{R_\ell,V,H},
\end{aligned}
\end{equation*}
and the following seven finite groups (see e.g., \cite{CS}*{Table 3.1}):
\begin{equation*}
\begin{aligned}
T&:=\left\{g \in SO(3); g \triangle =\triangle \right\}, &
T_d&:=\left\{g \in O(3); g \triangle =\triangle \right\}, \quad
T_h:= \left\{\pm g ; g \in T\right\}, \\
O&:=\left\{g \in SO(3); g \Diamond =\Diamond \right\}, &
O_h&:=\left\{g \in O(3); g \Diamond =\Diamond \right\}= \left\{\pm g ; g \in O\right\}, \\
I&:=\left\{g \in SO(3); g \Pentagon =\Pentagon \right\}, &
I_h&:=\left\{g \in O(3); g \Pentagon =\Pentagon \right\}= \left\{\pm g ; g \in I\right\},
\end{aligned}
\end{equation*}
where $\triangle=\triangle^3$, $\Diamond=\Diamond^3$, and $\Pentagon=\Pentagon^3$ denote the regular tetrahedron (simplex), the regular octahedron, and the regular icosahedron with $o$ as their centroids, respectively.
Note that, for example, if we fix the configuration of $\triangle$, then $T_d$ is uniquely determined as the subgroup of $O(3)$ without any ambiguity of conjugations.
In other words, any discrete subgroup conjugated to $T_d$ in $O(3)$ is realized as $\left\{g \in O(3); g \triangle' =\triangle' \right\}$, where $\triangle' =k \triangle$ for some $k \in O(3)$.
Using the notation of \cite{BF}, $T_d=O(\triangle)\cong O(\triangle')$.
We also note that among the above classification $C_\ell, D_\ell, T, O, I$ are indeed subgroups of $SO(3)$, and that the Santal\'o point of $K \in \mathcal{K}^3(G)$ is the origin $o$ except only two subgroups $G=C_\ell$ or $C_{\ell v}$ $(\ell \in \N)$.

Now we recall the known results of the three dimensional case.
A convex body $K \in \mathcal{K}^3(D_{2h})$ is $1$-unconditional and $D_{2h} \cong ({\mathbb Z}_2)^3$ as groups.

\begin{theorem}[\cite{SR}, \cite{Me1986}]
\label{thm:SR}
For $K \in \mathcal{K}^3(D_{2h})$, we have
\begin{equation*}
 \mathcal{P}(K) \geq \mathcal{P}(\Diamond) =\frac{32}{3}.
\end{equation*}
The equality holds if and only if $K$ is a three dimensional Hanner polytope \textup{(\cite{Re}, \cite{Me1986})}, i.e., $K$ is the image of the regular octahedron $\Diamond$ or the cube $\Diamond^\circ$ by a linear transformation of $\R^3$ by a diagonal matrix.
\end{theorem}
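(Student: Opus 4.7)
The plan is to prove Theorem~\ref{thm:SR} by induction on the ambient dimension $n$, treating the more general class of $1$-unconditional convex bodies in $\R^n$ (i.e.\ bodies invariant under reflection in every coordinate hyperplane, which in $\R^3$ is precisely the $D_{2h}$-invariance). The base case $n=1$ is immediate, since such a body is a segment $[-a,a]$ with polar $[-1/a,1/a]$, giving $\mathcal{P}(K)=4=4^1/1!$.

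Setup for the inductive step. Let $K\subset\R^n$ be $1$-unconditional; then $K^\circ$ is also $1$-unconditional and the Santal\'o point is the origin. A crucial structural property is that the central slice $P:=K\cap\{x_n=0\}$ coincides with the orthogonal projection of $K$ onto $\{x_n=0\}$: if $(x_1,\dots,x_{n-1},t)\in K$, then $(x_1,\dots,x_{n-1},-t)\in K$ by unconditionality, and convexity places their midpoint $(x_1,\dots,x_{n-1},0)$ in $K$. A direct calculation from the definition of polarity identifies the central slice of $K^\circ$ with $P^\circ$ (polarity computed in $\R^{n-1}$). Since $P$ is itself a $1$-unconditional convex body in $\R^{n-1}$, the inductive hypothesis supplies $|P|\,|P^\circ|\geq 4^{n-1}/(n-1)!$.

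Key volume-comparison lemma. Let $a:=\max_{x\in K}x_n$; the apex $(0,\dots,0,\pm a)$ lies in $K$ by averaging the $2^{n-1}$ sign-changes of a maximizer, and the analogous computation in $K^\circ$ gives $\max_{x\in K^\circ}x_n=1/a$. Writing $|K|=2\int_0^a f(t)\,dt$ and $|K^\circ|=2\int_0^{1/a}g(s)\,ds$ with $f(t)=|K\cap\{x_n=t\}|$ and $g(s)=|K^\circ\cap\{x_n=s\}|$, the two naive Brunn--Minkowski estimates are the cylinder upper bound $f(t)\leq f(0)=|P|$ (so $|K|\leq 2a|P|$) and the cone lower bound $f(t)\geq f(0)(1-t/a)^{n-1}$ (so $|K|\geq (2a/n)|P|$), with the analogous statements for $g$. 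These alone yield only $|K|\,|K^\circ|\geq (4/n^2)|P|\,|P^\circ|$, missing a factor $n$. The heart of the proof is the Meyer / Saint-Raymond inequality
\begin{equation*}
|K|\,|K^\circ|\geq \frac{4}{n}\,|P|\,|P^\circ|,
\end{equation*}
which, combined with the inductive hypothesis, yields the desired $|K|\,|K^\circ|\geq 4^n/n!$ and in particular $32/3$ for $n=3$. To recover the missing factor I would exploit the pointwise slice inclusion
\begin{equation*}
K^\circ\cap\{x_n=s\}\subseteq (1-st)\,\bigl(K\cap\{x_n=t\}\bigr)^\circ \qquad (t,s\geq 0,\ st<1),
\end{equation*}
which holds because $(u,s)\in K^\circ$ requires $u\cdot x\leq 1-st$ for every $x\in K\cap\{x_n=t\}$. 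Coupling this inclusion with the inductive polar bound $|K_t|\,|K_t^\circ|\geq 4^{n-1}/(n-1)!$ on the $1$-unconditional slices $K_t\subset\R^{n-1}$, and integrating carefully by Fubini in the $(t,s)$-plane, is Meyer's device that recovers the sharp factor $4/n$.

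Equality and Hanner polytopes. Equality in the key lemma forces $K$ to be either a cylinder over $P$ in the $x_n$-direction (with $K^\circ$ then being a bipyramid over $P^\circ$ with apexes $\pm(1/a)e_n$) or a bipyramid over $P$ with apexes $\pm a e_n$ (with $K^\circ$ then being a cylinder over $P^\circ$); one checks directly that both cases attain $(4/n)|P||P^\circ|$. Combined with the $n=2$ equality case, which forces $P$ to be an axis-aligned rectangle, we conclude that $K\subset\R^3$ is linearly equivalent, via a diagonal matrix, either to the cube or to the regular octahedron, i.e.\ to a three-dimensional Hanner polytope. The main technical obstacle is precisely the sharp factor $4/n$ in the key lemma: the naive Brunn--Minkowski bounds fall short by a factor of $n$, so the delicate Meyer-type integration based on the slice inclusion above is indispensable.
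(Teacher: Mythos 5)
Your overall architecture (induction on the dimension, reducing a $1$-unconditional body in $\R^n$ to its central section $P=K\cap\{x_n=0\}$, which is again $1$-unconditional) is coherent, and the arithmetic $\tfrac{4}{n}\cdot\tfrac{4^{n-1}}{(n-1)!}=\tfrac{4^n}{n!}$ would indeed close the induction. Note first that the paper does not prove this theorem at all --- it is quoted from Saint-Raymond and Meyer --- and the only proof it sketches (Section 1.4) is Meyer's, which is a genuinely different decomposition: one cuts $K$ into its $2^n$ orthant pieces $K_+$, produces from the polytope $\conv\{o,P_1,\dots,P_n,P\}\subseteq K_+$ a test point in $K^\circ_+$ whose coordinates encode the $(n-1)$-volumes of the coordinate-hyperplane sections, does the same for $K^\circ_+$, and pairs the two test points; the inductive hypothesis enters through the $n$ sections $K\cap\{x_i=0\}$ simultaneously, and the pairing inequality $z^*\cdot w^*\le 1$ delivers the sharp constant $1/n!$ in one line. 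Your route slices perpendicular to a single axis instead.

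The genuine gap is in your key lemma $|K|\,|K^\circ|\ge\tfrac{4}{n}|P|\,|P^\circ|$: the two ingredients you propose to prove it with point in incompatible directions. The inclusion $K^\circ\cap\{x_n=s\}\subseteq(1-st)\,(K_t)^\circ$ gives an \emph{upper} bound $g(s)\le(1-st)^{n-1}|(K_t)^\circ|$ on the slice volumes of $K^\circ$, while the inductive bound $|K_t|\,|(K_t)^\circ|\ge 4^{n-1}/(n-1)!$ gives a \emph{lower} bound on $|(K_t)^\circ|$; chaining an upper bound against a lower bound on the same quantity yields nothing, and no Fubini rearrangement of $\int_0^a f\int_0^{1/a}g$ can manufacture the missing lower-bound coupling from them. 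The coupling that is actually needed is of the form ``if $f$ is close to its cone lower bound $f(0)(1-t/a)^{n-1}$ then $g$ must be close to its cylinder upper bound $g(0)$,'' and the natural candidate $\bigl(f(t)/f(0)\bigr)^{1/(n-1)}\bigl(g(s)/g(0)\bigr)^{1/(n-1)}\le 1-st$ reduces via your inclusion to $|K_t|\,|(K_t)^\circ|\le|P|\,|P^\circ|$, which is false in general (a non-central slice can have larger volume product than the central one). So the heart of the proof is missing, not merely left as routine. Separately, your equality analysis is too coarse: in $\R^2$ the unconditional minimizers are rectangles \emph{and} coordinate diamonds, and taking cylinders/bipyramids over both produces all four three-dimensional Hanner types (including the diamond prism and the square bipyramid), which are $D_{2h}$-invariant minimizers that are \emph{not} diagonal images of $\Diamond$ or $\Diamond^\circ$ --- so the characterization you (and, as literally worded, the theorem statement) end with cannot be the output of a correct equality argument.
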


\begin{theorem}[\cite{BF}*{Theorem 1}]
\label{thm:BF}
\begin{enumerate}[{\upshape (i)}]
\item 
$\mathcal{P}(K) \geq \mathcal{P}(\triangle)$ holds for $K \in \mathcal{K}^3(T_d)$.
The equality holds if and only if $K$ is a dilation of the regular tetrahedron $\triangle$ or  $\triangle^\circ$.
 \item 
$\mathcal{P}(K) \geq \mathcal{P}(\Diamond)$ holds for $K \in \mathcal{K}^3(O_h)$.
The equality holds if and only if $K$ is a dilation of the regular octahedron $\Diamond$ or the cube $\Diamond^\circ$.
 \item 
$\mathcal{P}(K) \geq \mathcal{P}(\Pentagon)$ holds for $K \in \mathcal{K}^3(I_h)$.
The equality holds if and only if $K$ is a dilation of the regular icosahedron $\Pentagon$ or the regular dodecahedron $\Pentagon^\circ$.
 \item 
Assume that $\ell \geq 3$. 
Then $\mathcal{P}(K) \geq \mathcal{P}(P_\ell)$ holds for $K \in \mathcal{K}^3(D_{\ell h})$,
where $P_\ell$ is the $\ell$-regular right prism defined by
\begin{equation*}
P_\ell :=
\conv
\left\{
\begin{pmatrix}
\cos k\xi \\
\sin k\xi \\
1
\end{pmatrix},
\begin{pmatrix}
\cos k\xi \\
\sin k\xi \\
-1
\end{pmatrix}; k=0,\ldots,l-1
\right\}.
\end{equation*}
Here, we denote by $\mathrm{conv}\,S$ the convex hull of a set $S$.
\end{enumerate}
\end{theorem}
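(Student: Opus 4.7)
The plan is to exploit the fact that each of $T_d$, $O_h$, $I_h$, and $D_{\ell h}$ is a finite real reflection group acting irreducibly (reducibly in (iv)) on $\R^3$, so in every case the fundamental chamber $D$ is a simplicial cone with apex $o$ bounded by three mirror planes, and the translates $\{gD\}_{g\in G}$ tile $\R^3$ up to boundary overlap. By uniqueness of the Santal\'o point together with averaging the functional $z\mapsto|K^z|$ over $G$, the Santal\'o point of any $K\in\mathcal{K}^3(G)$ is the origin, so $\mathcal{P}(K)=|K|\,|K^\circ|$, and the tiling gives
\begin{equation*}
\mathcal{P}(K)=|G|^2\,|K\cap D|\cdot|K^\circ\cap D|.
\end{equation*}

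The core step is then to prove a sharp \emph{chamber Mahler inequality}
\begin{equation*}
|K\cap D|\cdot|K^\circ\cap D|\geq \alpha_G,
\end{equation*}
where $\alpha_G$ depends only on the three edge directions $\hat v_1,\hat v_2,\hat v_3$ of $D$ and the three inward wall normals $\hat u_1,\hat u_2,\hat u_3$. The intuition is that $K\cap D\supset\conv\{o,\rho_K(\hat v_i)\hat v_i\}$ and $K^\circ\cap D\supset\conv\{o,\hat u_i/h_K(\hat u_i)\}$, so one pairs these lower bounds through the duality $\rho_{K^\circ}=1/h_K$ and a Mahler-type inequality for convex bodies in a simplicial cone pinned at the apex. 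The extremizer is a tetrahedron $\conv\{o,t_1\hat v_1,t_2\hat v_2,t_3\hat v_3\}$; unfolding it by the $G$-action recovers either the convex hull of a single $G$-orbit (the ``inscribed'' polytope) or its polar. In cases (i)--(iii) this transitive orbit gives $\triangle$, $\Diamond$, $\Pentagon$ (and their polars $\triangle^\circ$, $\Diamond^\circ$, $\Pentagon^\circ$), while computing $\alpha_G$ and multiplying by $|G|^2$ recovers the values $\mathcal{P}(\triangle)$, $\mathcal{P}(\Diamond)$, $\mathcal{P}(\Pentagon)$.

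For case (iv), $D_{\ell h}\cong I_2(\ell)\times A_1$ is a \emph{reducible} reflection group and its chamber factors as a planar wedge times a half-line. The chamber inequality then factors along this decomposition: the planar part reduces to the two-dimensional $D_\ell$-invariant result $\mathcal{P}(\Pentagon_\ell)$ (Theorem 1.2) applied to horizontal sections, while the vertical part is a trivial one-dimensional Mahler-type estimate on an interval, producing the prism $P_\ell$ as the extremal body. The main obstacle is establishing the sharp chamber inequality with its rigidity. The icosahedral case (iii) is the most delicate: since $H_3$ is non-crystallographic, no tensor or product decomposition of $D$ is available, and one must carry out the slicing argument directly using the precise $H_3$ dihedral angles to verify that the tetrahedral extremizer on $D$ indeed achieves the sharp constant $\alpha_{I_h}$.
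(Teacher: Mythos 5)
First, a point of reference: the paper does not prove Theorem \ref{thm:BF} at all --- it is quoted from Barthe--Fradelizi \cite{BF}*{Theorem 1}, and the only account of its proof here is the sketch in the introduction (Meyer's volume estimate generalized to truncated cones over a Weyl chamber). Measured against that argument, your global strategy is the right one: the Santal\'o point is the origin, the chamber tiling gives $\mathcal{P}(K)=|G|^2\,|K\cap D|\,|K^\circ\cap D|$, and everything reduces to a sharp lower bound for $|K\cap D|\,|K^\circ\cap D|$ over the simplicial fundamental cone $D$.

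The gap is that this reduction is where the work begins, not where it ends, and you do not prove the ``chamber Mahler inequality'': you defer it to ``a Mahler-type inequality for convex bodies in a simplicial cone pinned at the apex,'' which is precisely the statement to be established. Worse, the one concrete mechanism you offer fails. You claim $K^\circ\cap D\supset\conv\{o,\hat u_i/h_K(\hat u_i)\}$ for the inward wall normals $\hat u_i$ of $D$; but for a Weyl chamber the inward normals are (up to normalization) the simple roots, which have pairwise non-positive inner products and hence do \emph{not} lie in $D$ --- e.g.\ for $O_h$ the chamber $\{x_1\ge x_2\ge x_3\ge 0\}$ has wall normal $e_1-e_2\notin D$ --- so that inclusion is false in general. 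Moreover, even if both inclusions held, inscribed-simplex lower bounds on $|K\cap D|$ and $|K^\circ\cap D|$ separately cannot give the sharp constant for the \emph{product}: at equality one of the two truncated cones equals its inscribed simplex while the other is strictly larger, so one needs an inequality that trades volume between $K\cap D$ and $K^\circ\cap D$. That trade is exactly what Meyer's test-point argument supplies (the three-dimensional analogue of Lemma \ref{lem:1}: from $|\conv\{o,P_1,P_2,P_3,x\}|\le|K\cap D|$ for all $x\in K$ one extracts a point of $K^\circ$ which, by the reflection symmetries, lies in $K^\circ\cap D$, and then pairs it with the corresponding test point for $K^\circ$), and it uses crucially that the edges of $D$ are symmetry axes, so $\partial K$ and $\partial K^\circ$ meet each edge $\R_{+}v_i$ at reciprocal radii. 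None of this machinery appears in your proposal, and without it neither the inequality nor the rigidity statement is established; your remarks on the reducible case $D_{\ell h}$ and on the non-crystallographic group $I_h$ are reasonable heuristics, but they sit on top of the same unproven core.
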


Furthermore, the solution of Mahler's conjecture for $n=3$ \cite{IS}*{Theorem 1} is corresponding to 
the case $G=S_2 (\cong {\mathbb Z}_2)$, which immediately implies

\begin{corollary}
\label{cor:1}
Assume that $G=C_{2h}$, $T_h$, $S_6$, $D_{3d}$, or $S_2$.
For $K \in \mathcal{K}^3(G)$, we have
\begin{equation*}
 \mathcal{P}(K) \geq \mathcal{P}(\Diamond) =\frac{32}{3}
\end{equation*}
with equality if and only if $K$ is a linear image of $\Diamond$ or $\Diamond^\circ$ which is invariant under the group $G$.
\end{corollary}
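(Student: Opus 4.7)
The plan is to reduce the corollary to the three-dimensional Mahler conjecture (Theorem 1 of \cite{IS}) by observing that every group $G$ in the list contains the central inversion $-E$. Once that is established, any $K \in \mathcal{K}^3(G)$ automatically satisfies $K = -K$, i.e.\ $\mathcal{K}^3(G) \subset \mathcal{K}^3_0$, and the quoted 3D Mahler theorem gives both the sharp lower bound and the equality characterization for free.

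The first step is thus a direct verification that $-E \in G$ in each of the five cases. For $S_2 = \{E,-E\}$ this is by definition. For $T_h = \{\pm g : g \in T\}$ this is also by definition. For $C_{2h} = \braket{R_2, H}$, observe that $R_2 = \mathrm{diag}(-1,-1,1)$ and $H = \mathrm{diag}(1,1,-1)$, so $R_2 H = -E$. For $S_6 = \braket{R_6 H}$, the matrices $R_6$ and $H$ commute, $R_6^3 = R_2$ and $H^3 = H$, whence
\begin{equation*}
(R_6 H)^3 = R_6^3 H^3 = R_2 H = -E.
\end{equation*}
Finally, $D_{3d} = \braket{R_6 H, V}$ contains $R_6 H$, so the same computation shows $-E \in D_{3d}$.

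Having verified $-E \in G$, the inclusion $\mathcal{K}^3(G) \subset \mathcal{K}^3_0$ is immediate, and \cite{IS}*{Theorem 1} then yields $\mathcal{P}(K) \geq \mathcal{P}(\Diamond) = 32/3$ for every $K \in \mathcal{K}^3(G)$. The equality case of that theorem describes the minimizers as three-dimensional Hanner polytopes; in dimension three these are exactly the linear images of the octahedron $\Diamond$ or of the cube $\Diamond^\circ$. Intersecting this class with the $G$-invariance condition produces precisely the characterization claimed in the statement.

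The proof is essentially bookkeeping: there is no genuine analytic obstacle once \cite{IS}*{Theorem 1} is in hand, and all the work lies in recognizing that these five subgroups of $O(3)$ are exactly the ones (not already covered by Theorems \ref{thm:SR} and \ref{thm:BF}) that contain the antipodal map. What one should be mildly careful about is making the group-theoretic computation explicit for $S_6$ and $D_{3d}$, where $-E$ appears only as a higher power of the generator $R_6 H$ rather than as a generator itself.
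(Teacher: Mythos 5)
Your proposal is correct and follows essentially the same route as the paper: both reduce the corollary to \cite{IS}*{Theorem 1} by observing that $-E\in G$ (equivalently $S_2\subset G$), so that $\mathcal{K}^3(G)\subset\mathcal{K}^3_0$, and then read off the equality case from the parallelepiped characterization there. The paper additionally packages this as a chain of subgroup inclusions terminating in $O_h$ (resp.\ the conjugate $gO_hg^{-1}$) in order to exhibit $\Diamond$ (resp.\ $g\Diamond$) as an explicit $G$-invariant minimizer, a point of attainment that your write-up leaves implicit but that is not needed for the literal if-and-only-if statement.
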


The following is the main result of this paper.

\begin{theorem}
\label{thm:2}
\begin{enumerate}[{\upshape (i)}]
\item 
$\mathcal{P}(K) \geq \mathcal{P}(\triangle)$ holds for $K \in \mathcal{K}^3(T)$.
The equality holds if and only if $K$ is a dilation of the regular tetrahedron $\triangle$ or $\triangle^\circ$.
 \item 
$\mathcal{P}(K) \geq \mathcal{P}(\Diamond)$ holds for $K \in \mathcal{K}^3(O)$.
The equality holds if and only if $K$ is a dilation of the regular octahedron $\Diamond$ or the cube $\Diamond^\circ$.
 \item 
$\mathcal{P}(K) \geq \mathcal{P}(\Pentagon)$ holds for $K \in \mathcal{K}^3(I)$.
The equality holds if and only if $K$ is a dilation of the regular icosahedron $\Pentagon$ or the regular dodecahedron $\Pentagon^\circ$.
 \item
Assume that $\ell \geq 3$. 
Then $\mathcal{P}(K) \geq \mathcal{P}(P_\ell)$ holds for $K \in \mathcal{K}^3(C_{\ell h})$,
where $P_\ell$ is the $\ell$-regular right prism defined in Theorem \ref{thm:BF}.
The equality holds if and only if
$K$ coincides with $P_\ell$ or $P_\ell^\circ$ up to a linear transformation in the abelian subgroup of $GL(3,\mathbb R)$ defined by
\begin{equation*}
\mathcal{G}:=
\left\{
\begin{pmatrix}
a\cos\theta & -a\sin\theta & 0 \\
a\sin\theta & a\cos\theta & 0 \\
0 & 0 & b 
\end{pmatrix}; a,b>0
\right\}.
\end{equation*}
The same inequality holds for $K \in \mathcal{K}^3(D_\ell)$.
The equality holds if and only if
$K$ coincides with $P_\ell$ or $P_\ell^\circ$ up to a linear transformation in 
\begin{equation*}
\mathcal{G'}:=
\left\{
\begin{pmatrix}
a & 0 & 0 \\
0 & a & 0 \\
0 & 0 & b 
\end{pmatrix}; a,b>0, \theta \in \R
\right\}.
\end{equation*}
\end{enumerate}
\end{theorem}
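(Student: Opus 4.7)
All four parts share a common framework, based on observing that each group $G$ in the statement sits as an index-$2$ normal subgroup of a reflection group $G^*$ already covered by Theorem \ref{thm:BF}: $T \triangleleft T_d$, $O \triangleleft O_h$, $I \triangleleft I_h$, $C_{\ell h} \triangleleft D_{\ell h}$, and $D_\ell \triangleleft D_{\ell h}$. An inspection of the generators in each case shows that the only $G$-fixed point in $\R^3$ is the origin, so for every $K \in \mathcal{K}^3(G)$ the Santal\'o point equals $o$ and $\mathcal{P}(K) = |K|\,|K^\circ|$. Fix any reflection $\sigma \in G^* \setminus G$ (a facet mirror of the defining regular polytope in (i)--(iii), and $\sigma = V$ in (iv)); normality gives $\sigma K \in \mathcal{K}^3(G)$ and $\mathcal{P}(\sigma K) = \mathcal{P}(K)$.

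The plan is to build the two $G^*$-invariant companions
\begin{equation*}
L := K \cap \sigma K, \qquad M := \conv(K \cup \sigma K),
\end{equation*}
related by the polar identities $L^\circ = \conv(K^\circ \cup \sigma K^\circ)$ and $M^\circ = K^\circ \cap \sigma K^\circ$, and to feed them into Theorem \ref{thm:BF}: this gives $\mathcal{P}(L), \mathcal{P}(M) \geq \mathcal{P}(P)$, where $P$ is the extremal polytope in the corresponding line of that theorem ($\triangle$, $\Diamond$, $\Pentagon$, or $P_\ell$). The problem then reduces to a transfer step deducing $\mathcal{P}(K) \geq \mathcal{P}(P)$ from the two $G^*$-invariant bounds. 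For case (iv) a more concrete route is available: every $K \in \mathcal{K}^3(C_{\ell h})$ is determined by its horizontal sections $K_z$, each a $C_\ell$-invariant planar body for which the sharp two-dimensional bound $\mathcal{P}_2(K_z) \geq \mathcal{P}_2(\Pentagon_\ell)$ is already known, and a Fubini-plus-polar-slice computation should reduce the bound on $\mathcal{P}(K)$ to a one-variable optimisation over the $z$-profile (the $D_\ell$ case is analogous after using $VH$ to identify $K_z$ with the reflection of $K_{-z}$).

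The main obstacle lies in the transfer step for (i)--(iii). None of the standard symmetrisations ($L$, $M$, or $\tfrac12(K + \sigma K)$) monotonically decreases the volume product, so one cannot simply substitute a $G^*$-invariant body into Theorem \ref{thm:BF}. A workable transfer must be quantitative: for example, an inequality of the form $\mathcal{P}(K)^2 \geq \mathcal{P}(L)\,\mathcal{P}(M)$ would close the argument via $\mathcal{P}(L), \mathcal{P}(M) \geq \mathcal{P}(P)$, but establishing such a bound likely requires averaging over the full set of reflections in $G^* \setminus G$ rather than a single $\sigma$, combined with a polar-dual Brunn--Minkowski or Pr\'ekopa--Leindler inequality adapted to this averaging. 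Identifying this inequality, with its correct equality case, is the core technical step.

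The equality case is then traced back through the chain: $\mathcal{P}(K) = \mathcal{P}(P)$ forces equality at every step, in particular forcing $K = \sigma K$ and hence $K \in \mathcal{K}^3(G^*)$, which reduces the classification to the equality statement of Theorem \ref{thm:BF}. In (iv) the wider continuous equality set is accounted for by the centraliser of $C_{\ell h}$ (respectively $D_\ell$) in $GL(3, \R)$ being exactly $\mathcal{G}$ (respectively $\mathcal{G}'$), which preserve both the group invariance of $P_\ell$ and the value of $\mathcal{P}$.
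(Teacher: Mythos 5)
Your proposal is not a proof: its central step is missing, and you say so yourself. The whole argument funnels through a ``transfer inequality'' such as $\mathcal{P}(K)^2 \geq \mathcal{P}(K\cap\sigma K)\,\mathcal{P}(\conv(K\cup\sigma K))$, which you leave as ``the core technical step'' to be identified. No such inequality is established, and none is known; this is exactly the obstruction that makes symmetrization approaches to Mahler-type problems fail (the volume product is not monotone under $K\mapsto K\cap\sigma K$, $K\mapsto\conv(K\cup\sigma K)$, or Minkowski averaging, and the elementary identities $|L|+|M|\geq 2|K|$, $|L^\circ|+|M^\circ|\geq 2|K^\circ|$ give bounds in the wrong direction for the product). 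The Fubini route you sketch for (iv) has an analogous defect: polarity does not commute with horizontal slicing (the polar of a slice $K\cap\{z=c\}$ is a projection of $K^\circ$, not a slice of it), so ``$\mathcal{P}_2(K_z)\geq\mathcal{P}_2(\Pentagon_\ell)$ plus Fubini'' does not assemble into a bound on $|K|\,|K^\circ|$ without a substantial new argument. There is also a smaller error in your equality analysis: for $G=C_{\ell h}$ the minimizers include all rotations of $P_\ell$ about the $z$-axis, which are not invariant under the fixed reflection $V$, so equality cannot force $K=\sigma K$ for a prescribed $\sigma\in G^*\setminus G$.

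The paper takes an entirely different route that never invokes Theorem \ref{thm:BF} as a black box. After a smooth approximation (Proposition \ref{prop:sch}), it cuts $K$ and $K^\circ$ into fundamental domains of the $G$-action, which are cones $o*\mathcal{S}_K(A,B,C)$ over curved ``triangles'' on $\partial K$, and bounds $|o*\mathcal{S}_K(\mathcal{C})|\,|o*\mathcal{S}_{K^\circ}(\Lambda(\mathcal{C}))|$ from below by $\tfrac19\,\overline{\mathcal{C}}\cdot\overline{\Lambda(\mathcal{C})}$ (Lemma \ref{lem:7}, the signed volume estimate from \cite{IS}). The group symmetry is then used algebraically, via $\overline{g\mathcal{C}}=g\overline{\mathcal{C}}$ for rotations, to reduce $\overline{\mathcal{C}}\cdot\overline{\Lambda(\mathcal{C})}$ to the contribution of a single edge, which is controlled by the planar estimate of Lemma \ref{lem:1}/Lemma \ref{lem:2}. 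This is precisely how the paper handles groups like $T$, $O$, $I$, $C_{\ell h}$, $D_\ell$ that are not generated by reflections, where the polar fundamental domain need not be convex and the argument of \cite{BF} does not apply. If you want to salvage your outline, you would need to either prove the transfer inequality (with its equality case) or replace it by the fundamental-domain machinery above.
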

In particular, Theorem \ref{thm:2} (i) provides a new partial result of the non-symmetric version of Mahler's conjecture in the three dimensional case (cf.\ Theorem \ref{thm:BF} (i)).
The results from Theorem \ref{thm:SR} to Theorem \ref{thm:2} are summarized as follows.

\begin{center}
\begin{tabular}{|c||c|c|c|c|c|c|c|}\hline
 $\ell$ & $1$ & $2$ & $3$ & $4$ & $5$ & $6$ & $\cdots$ \\ \hline
 $C_{\ell}$ & $\{E\}$ & $C_{2}$ & $C_{3}$ & $C_{4}$ & $C_{5}$ & $C_{6}$
 & $\cdots$ \\ \hline
 $C_{\ell v}$ & $C_{1v}$ & $C_{2v}$ & $C_{3v}$ & $C_{4v}$ & $C_{5v}$ &
 $C_{6v}$ & $\cdots$ \\ \hline
 $C_{\ell h}$ & \gray{$C_{1v}$} & {$\bullet C_{2h}$} & {$\circ C_{3h}$}
 & {$\circ C_{4h}$} & {$\circ C_{5h}$} & {$\circ C_{6h}$} & {$\circ
 \cdots$} \\ \hline
 $S_{\ell}$ & \gray{$C_{1v}$} & {$\bullet S_{2}$} & \gray{$C_{3h}$} &
 $S_{4}$ & \gray{$C_{5h}$} & $\bullet S_{6}$ & $\cdots$ \\ \hline
 $D_{\ell}$ & \gray{$C_{2}$} & $D_{2}$ & {$\circ D_{3}$} & {$\circ
 D_{4}$} & {$\circ D_{5}$} & {$\circ D_{6}$} & {$\circ \cdots$} \\
 \hline
 $D_{\ell h}$ & \gray{$C_{2v}$} & {$\ddagger D_{2h}$} & {$\dagger
 D_{3h}$} & {$\dagger D_{4h}$} & {$\dagger D_{5h}$} & {$\dagger
 D_{6h}$} & {$\dagger \cdots$} \\ \hline
 $D_{\ell d}$ & \gray{$C_{2h}$} & $D_{2d}$ & {$\bullet D_{3d}$} &
 $D_{4d}$ & $D_{5d}$ & $D_{6d}$ & $\cdots$ \\ \hline\hline
  & {$\circ T$} & {$\dagger T_d$} & {$\bullet T_h$} & {$\circ O$} &
 {$\dagger O_h$} & {$\circ I$} & {$\dagger I_h$} \\ \hline
\end{tabular}

\medskip

\begin{tabular}{ll}
$\dagger$: The results in \cite{BF} & $\ddagger$: The result in \cite{SR} \\
\multicolumn{2}{l}{$\bullet$: Results deduced from \cite{IS} (The case where $S_2 \subset G$ and $\Diamond$ is a minimizer)} \\
$\circ$: New results (Theorem \ref{thm:2}) & \gray{Gray}: Duplicates \\
\end{tabular}
\end{center}

\subsection{Method of the proof and organization of this paper}

The proof of results in this paper is based on a simple inequality \cite{IS}*{Proposition 3.2} that gives rise to an effective method to estimate the volume product $\mathcal{P}(K)$ from below, which is a natural extension of the volume estimate by M. Meyer.
He gave in \cite{Me1986} an elegant proof of the result by Saint-Raymond \cite{SR} (see Theorem \ref{thm:SR} above for $n=3$) for $1$-unconditional bodies $K \subset \R^n$ as follows. 
An $1$-unconditional body $K$ is determined by the part $K_1$ in the first octant:
\begin{equation*}
K_1 := K \cap \{ (x_1,\ldots,x_n) \in \R^n; x_1 \geq 0,\cdots,x_n \geq 0 \}.
\end{equation*}
We denote the intersection of $\partial K$ with each positive part of coordinate axes by $P_1,\ldots,P_n$, respectively.
For any point $P$ in $K_1$, we can make the polytope which is the convex full of vertices $o$, $P_1,\ldots,P_n$, and $P$.
The volume of this polytope is less than or equal to that of $K_1$.
This inequality yields a test point contained in the dual (truncated) cone of $K_1$.
Interchanging the role of $K_1$ and the dual cone, we get another test point in $K_1$.
By paring these two test points, we obtain the sharp lower bound estimate of $\mathcal{P}(K)$ for $1$-unconditional bodies $K$.

Meyer's estimate for the above $K_1$ can be generalized to the case of a convex truncated cone and it was used in \cite{BF}.
Actually, $n$-dimensional convex bodies with many hyperplane symmetries were treated.
In the argument in \cite{BF}, the hyperplane symmetries (reflections) are essential.
For instance, for an $1$-unconditional body $K$, the symmetry yields that the polar body $K^\circ$ is also $1$-unconditional.

In general, $K \in \mathcal{K}^3(G)$ has a fundamental domain $\hat{K}$ with respect to the discrete group action by $G$.
Then $\hat{K}$ is a convex truncated cone.
If $G \subset O(n)$ is not a Coxeter group, then the corresponding region $\hat{K}^\circ$ for the polar $K^\circ$ of $K$ is not necessarily convex.
Nevertheless, by using the inequality \cite{IS}*{Proposition 3.2}, in some cases it is possible to obtain the sharp estimate of $\mathcal{P}(K)$ even for convex bodies $K$ without enough hyperplane symmetries.
In this paper we focus on the three dimensional case.

This paper is organized as follows.
In Section \ref{sec:2}, we review necessary facts about two dimensional bodies.
Though all facts here may be well-known, we give short proofs for the self-containedness.
In Section \ref{sec:3}, we give a detailed exposition of the ``signed volume estimate'' for three dimensional $G$-invariant convex bodies.
The principal estimate is the inequality in Lemma \ref{lem:7}, which is frequently used in the arguments in Section \ref{sec:4}.
In Section \ref{sec:4}, we prove the main result (Theorem \ref{thm:2}).
We apply the estimations arranged in Section \ref{sec:3} to the convex bodies in $\mathcal{K}^3(G)$ for each discrete subgroup $G \subset O(3)$.
By means of the inequality in Lemma \ref{lem:7}, the estimation is reduced to the two dimensional result prepared in Sections \ref{sec:2} and \ref{sec:3}.
We also characterize the equality condition for each case in Section \ref{sec:5}.

\section{The two dimensional case}
\label{sec:2}

In this section, we prepare necessary facts about two dimensional bodies.
Although they are essentially proved in \cite{BMMR}, we give proofs of them for the sake of completeness.
The estimate of the volume product $\mathcal{P}(K)$ of a three dimensional convex body $K$ is eventually deduced to that case.
In what follows, we denote by 
\begin{equation*}
\vo, \va, \vb, \vp, \va^\circ, \text{etc.}
\end{equation*}
the position vectors of points $o, A, B, P, A^\circ$, etc.\ in $\R^2$ or $\R^3$, respectively.
And $\va \parallel \vb$ means that the two vectors $\va$ and $\vb$ are parallel.
We denote the {\it positive hull} generated by points
$A_1,\cdots,A_m \in \R^n$ by
\begin{equation*}
\pos (A_1,\dots,A_m)=
\left\{
t_1 \va_1 + \cdots + t_m \va_m \in \R^n; t_1, \dots, t_m \geq 0
\right\}.
\end{equation*}

\subsection{An estimate for two dimensional convex truncated cones}

The following formula estimates the volume product of the intersection of a convex body $K \subset \R^2$ and a positive hull.
\begin{lemma}
\label{lem:1}
Let $K \subset \R^2$ be a convex body. Assume that $A, B \in \partial K$ and $A^\circ, B^\circ \in \partial K^\circ$ with $\va \cdot \va^\circ=\vb \cdot \vb^\circ=1$.
We put $L:= K \cap \pos(A,B)$ and $L^\circ:= K^\circ \cap \pos(A^\circ,B^\circ)$.
Then we have $|L| \, |L^\circ| \geq (\va-\vb)\cdot (\va^\circ-\vb^\circ)/4$.
\end{lemma}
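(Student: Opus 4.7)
The plan is to select a single test point $P_0 \in \partial K$ on the portion of $\partial K$ between $A$ and $B$ contained in $\pos(A,B)$, together with an outward normal $\vq_0$ of $K$ at $P_0$ normalized by $\vp_0 \cdot \vq_0 = 1$. The estimate then follows by combining two convex-quadrilateral lower bounds for $|L|$ and $|L^\circ|$ with the two-dimensional Lagrange identity
\begin{equation*}
(\vu \wedge \vp)(\vu^\circ \wedge \vq) \;=\; (\vu \cdot \vu^\circ)(\vp \cdot \vq) \;-\; (\vu \cdot \vq)(\vu^\circ \cdot \vp),
\end{equation*}
where I write $\vu := \va - \vb$, $\vu^\circ := \va^\circ - \vb^\circ$ and $\vx \wedge \vy := x_1 y_2 - x_2 y_1$.

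Existence of $P_0$ comes from the intermediate value theorem. As $\vp$ traverses the arc from $\va$ to $\vb$, the continuous function $\vp \mapsto \vu^\circ \cdot \vp$ has endpoint values
\begin{equation*}
\vu^\circ \cdot \va \;=\; 1 - \va \cdot \vb^\circ \;\geq\; 0, \qquad \vu^\circ \cdot \vb \;=\; \vb \cdot \va^\circ - 1 \;\leq\; 0,
\end{equation*}
the inequalities coming from $A, B \in K$ and $A^\circ, B^\circ \in K^\circ$ together with $\va \cdot \va^\circ = \vb \cdot \vb^\circ = 1$. Hence some $P_0$ on the arc satisfies $\vu^\circ \cdot \vp_0 = 0$. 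I then take $\vq_0$ to be any outward normal of $K$ at $P_0$ rescaled to $\vp_0 \cdot \vq_0 = 1$; the corresponding $Q_0$ lies on the arc of $\partial K^\circ$ from $A^\circ$ to $B^\circ$.

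Now $L$ is convex and contains $o, A, P_0, B$ in angular order around $o$, so it contains the convex quadrilateral $oAP_0B$, whose area the shoelace formula gives as $\tfrac{1}{2}(\va \wedge \vp_0 + \vp_0 \wedge \vb) = \tfrac{1}{2}\,\vu \wedge \vp_0$. Hence $|L| \geq \tfrac{1}{2}\,\vu \wedge \vp_0$, and the same argument applied to $L^\circ$ and the quadrilateral $oA^\circ Q_0 B^\circ$ produces $|L^\circ| \geq \tfrac{1}{2}\,\vu^\circ \wedge \vq_0$. Multiplying these bounds and substituting into the Lagrange identity with $\vp_0 \cdot \vq_0 = 1$ and $\vu^\circ \cdot \vp_0 = 0$, the right-hand side of the identity collapses to $\vu \cdot \vu^\circ = (\va - \vb) \cdot (\va^\circ - \vb^\circ)$, which delivers the claim.

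The main subtlety I expect is that $P_0$ may be a vertex of $\partial K$, where the outward normal is not unique; but because $\vu^\circ \cdot \vp_0 = 0$, the term $(\vu \cdot \vq_0)(\vu^\circ \cdot \vp_0)$ in Lagrange's identity vanishes for \emph{every} admissible choice of $\vq_0$, so the argument is robust. One also needs the CCW ordering of $o, A, P_0, B$ around $o$ for the two wedges to be nonnegative; this is automatic since $P_0$ lies on the arc of $\partial K$ between $A$ and $B$ inside the cone $\pos(A,B)$.
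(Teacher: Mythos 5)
Your argument takes a genuinely different route from the paper's, and most of it is sound (the intermediate value construction of $P_0$, the bound $2|L|\ge\vu\wedge\vp_0\ge0$ obtained from the two triangles $oAP_0$ and $oP_0B$, and the Lagrange identity), but one step is not justified: the inequality $2|L^\circ|\ge\vu^\circ\wedge\vq_0$. You derive it from the inscribed quadrilateral $oA^\circ Q_0B^\circ$, which requires $Q_0\in L^\circ=K^\circ\cap\pos(A^\circ,B^\circ)$, and you assert without proof that $Q_0$ lies on the arc of $\partial K^\circ$ from $A^\circ$ to $B^\circ$. For $P_0$ strictly between $A$ and $B$ this is true, but it relies on the monotonicity of the dual correspondence $P\mapsto\{\vq\in\partial K^\circ;\ \vp\cdot\vq=1\}$ along $\partial K$, which should be stated. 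More seriously, your intermediate value argument can force $P_0=A$ (exactly when $\va\cdot\vb^\circ=1$) or $P_0=B$, and then ``any outward normal'' fails: for $K=[-1,1]^2$, $A=(1,1)$, $B=(1,0)$, $\vb^\circ=(1,0)$, $\va^\circ=(1/2,1/2)$, the only zero of $\vu^\circ\cdot\vp_0$ on the arc is at $P_0=A$, whose normalized normals sweep the whole edge of the diamond $K^\circ$ from $(1,0)$ to $(0,1)$, and the admissible choice $\vq_0=(0,1)$ lies outside $\pos(A^\circ,B^\circ)$. Your robustness remark about vertices only covers the Lagrange identity (where the offending term indeed vanishes for every $\vq_0$), not this containment, which is the step that actually depends on the choice of $\vq_0$. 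Two repairs: (a) in the boundary cases take $\vq_0:=\va^\circ$ (resp.\ $\vb^\circ$), which satisfies $\vp_0\cdot\vq_0=1$ and lies in $\pos(A^\circ,B^\circ)$ trivially; or (b) prove that $\vu^\circ\wedge\vq\le2|L^\circ|$ for \emph{every} $\vq\in K^\circ$, by writing $\tfrac12\,\vu^\circ\wedge\vq$ as the sum of the signed areas of $oA^\circ Q$ and $oQB^\circ$ and comparing with $|L^\circ|=\tfrac12\int\vr\wedge d\vr$ taken over the arc from $A^\circ$ to $B^\circ$; this removes any need to locate $Q_0$ at all.

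Repair (b) is in essence the paper's proof, which dispenses with $P_0$ altogether: from $\vu\wedge\vp\le2|L|$ for all $\vp\in K$ one reads off the single test point $\vu^\perp/(2|L|)\in K^\circ$ (with $\vu^\perp$ the rotation of $\vu$ by a right angle), symmetrically $(\vu^\circ)^\perp/(2|L^\circ|)\in K$, and pairs them through $\vx\cdot\vy\le1$ together with $\vu^\perp\cdot(\vu^\circ)^\perp=\vu\cdot\vu^\circ$; no case analysis, no intermediate value theorem, no monotonicity of the duality map. What your route buys is a more visibly geometric picture---two inscribed quadrilaterals whose areas multiply to the desired quantity via Lagrange's identity---at the cost of having to control where the pair $(P_0,Q_0)$ sits, which is exactly where the gap lies. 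You should also fix the orientation convention once at the outset (say $\va\wedge\vb>0$, which forces $\va^\circ\wedge\vb^\circ\ge0$), since the signs of all the wedges depend on it.
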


\begin{center}
\includegraphics[height=10em]{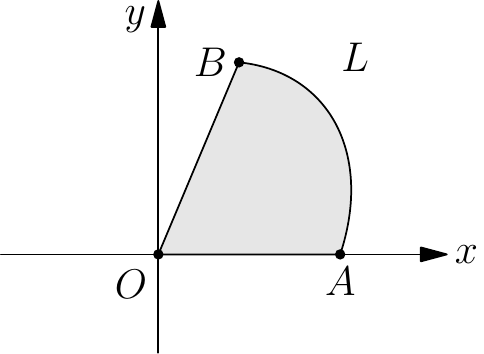}
\includegraphics[height=10em]{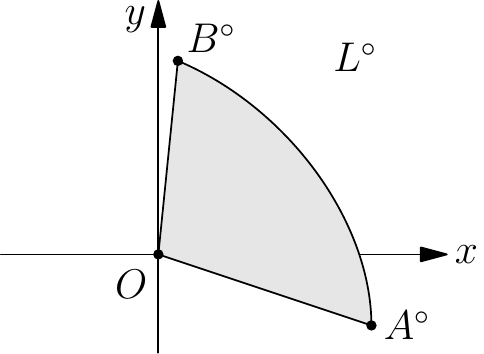}
\end{center}

\begin{proof}
We put
\begin{equation*}
\va=
\begin{pmatrix}
a_1 \\ a_2
\end{pmatrix}, \quad
\vb=
\begin{pmatrix}
b_1 \\ b_2
\end{pmatrix}, \quad
\va^\circ=
\begin{pmatrix}
a^\circ_1 \\ a^\circ_2
\end{pmatrix}, \quad
\vb^\circ=
\begin{pmatrix}
b^\circ_1 \\ b^\circ_2
\end{pmatrix}.
\end{equation*}
Since $\va \cdot \va^\circ = \vb \cdot \vb^\circ=1$, we have
$a_1 a^\circ_1 + a_2 a^\circ_2 =1$ and $b_1 b^\circ_1 + b_2 b^\circ_2 =1$.
For any point $P(x,y)$ in $K$, since the sum of the signed area of the triangle $OAP$ and that of $OPB$ is less than or equal to $|L|$, we have
\begin{equation*}
\frac{1}{2}
\begin{vmatrix}
a_1 & a_2 \\
x & y 
\end{vmatrix}
+
\frac{1}{2}
\begin{vmatrix}
x & y \\
b_1 & b_2
\end{vmatrix}
\leq |L| \text{ for any } 
\begin{pmatrix}
x \\y
\end{pmatrix}
\in K,
\end{equation*}
which means that
\begin{equation*}
\frac{1}{2|L|}
\begin{pmatrix}
 -a_2+b_2 \\ a_1-b_1
\end{pmatrix}
\in K^\circ.
\end{equation*}
Similarly, we obtain
\begin{equation*}
\frac{1}{2|L^\circ|}
\begin{pmatrix}
-a_2^\circ+b_2^\circ \\ a_1^\circ-b_1^\circ 
\end{pmatrix}
 \in K.
\end{equation*}
These test points yields that
\begin{equation*}
4 |L| |L^\circ| 
\geq (a_1-b_1)(a_1^\circ-b_1^\circ) + (a_2-b_2)(a_2^\circ-b_2^\circ)
= (\va-\vb)\cdot(\va^\circ-\vb^\circ).
\end{equation*}
\end{proof}
\begin{remark}
\label{rem:2.2}
We can easily check that the above two test points contained in $L^\circ$ and $L$, respectively.
Lemma \ref{lem:1} is closely related with the second proof of \cite{BMMR}*{Lemma 7}.
\end{remark}

\subsection{The case of cyclically symmetric bodies}

The next lemma is useful for, especially, the case of cyclically symmetric bodies in $\R^2$.

\begin{lemma}
\label{lem:2}
Under the same assumptions in Lemma \ref{lem:1}, assume that $B=R(A)$ and $B^\circ=R(A^\circ)$, where $R$ denotes the rotation of angle $\xi \in (0,\pi)$ around $o$.
Then
\begin{equation}
\label{eq:c}
 |L| \, |L^\circ| \geq \frac{1- \cos \xi}{2}
\end{equation}
holds.
\end{lemma}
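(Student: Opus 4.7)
The plan is to reduce directly to Lemma \ref{lem:1}. Under the rotational hypothesis $\vb = R\va$ and $\vb^\circ = R\va^\circ$, the two differences appearing in Lemma \ref{lem:1} factor as
\begin{equation*}
\va-\vb = (I-R)\va, \qquad \va^\circ-\vb^\circ = (I-R)\va^\circ,
\end{equation*}
so the right-hand side of the inequality in Lemma \ref{lem:1} becomes
\begin{equation*}
(\va-\vb)\cdot(\va^\circ-\vb^\circ) = \va \cdot (I-R)^{T}(I-R)\va^\circ.
\end{equation*}

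Next I would compute the matrix $(I-R)^{T}(I-R)$ explicitly. Using $R^{T}R = I$ together with the fact that for a planar rotation by angle $\xi$ one has $R+R^{T} = 2\cos\xi \cdot I$, I obtain
\begin{equation*}
(I-R)^{T}(I-R) = 2I - (R+R^{T}) = 2(1-\cos\xi)\, I.
\end{equation*}
Combining this identity with the normalization $\va\cdot\va^\circ = 1$ assumed in Lemma \ref{lem:1} gives
\begin{equation*}
(\va-\vb)\cdot(\va^\circ-\vb^\circ) = 2(1-\cos\xi)\,\va\cdot\va^\circ = 2(1-\cos\xi).
\end{equation*}

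Substituting this value into the conclusion $|L|\,|L^\circ| \geq \frac{1}{4}(\va-\vb)\cdot(\va^\circ-\vb^\circ)$ of Lemma \ref{lem:1} yields \eqref{eq:c} immediately. There is no serious obstacle in this argument; the only small point to keep in mind is that the assumption $\xi\in(0,\pi)$ guarantees $1-\cos\xi>0$, so that $L$ and $L^\circ$ are genuine two-dimensional regions to which Lemma \ref{lem:1} applies, and the estimate is non-trivial. The lemma can thus be viewed as a rotationally symmetric specialization of Lemma \ref{lem:1}, tailored for the cyclic groups $C_\ell$ that will play a central role in the three dimensional arguments of later sections.
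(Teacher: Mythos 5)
Your proof is correct and follows essentially the same route as the paper: both reduce to Lemma \ref{lem:1} and evaluate $(\va-\vb)\cdot(\va^\circ-\vb^\circ)=2(1-\cos\xi)$ using $\va\cdot\va^\circ=1$. The only difference is cosmetic — you use the coordinate-free identity $(I-R)^{T}(I-R)=2(1-\cos\xi)I$, whereas the paper normalizes coordinates and computes the same dot product explicitly.
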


\begin{proof}
We may assume that
\begin{equation*}
\begin{aligned}
\va&=
\begin{pmatrix}
 a_1 \\ a_2
\end{pmatrix}
=
\begin{pmatrix}
1 \\ 0
\end{pmatrix}, &
\vb&=
\begin{pmatrix}
 b_1 \\ b_2
\end{pmatrix}
=
\begin{pmatrix}
 \cos \xi \\ \sin \xi
\end{pmatrix}, \\
 \va^\circ &= 
\begin{pmatrix}
 a_1^\circ \\ a_2^\circ
\end{pmatrix}
=
\begin{pmatrix}
 1 \\ a_2^\circ
\end{pmatrix}, &
\vb^\circ&=
\begin{pmatrix}
b_1^\circ \\ b_2^\circ 
\end{pmatrix}
=
\begin{pmatrix}
 \cos \xi-a_2^\circ \sin \xi \\ \sin \xi + a_2^\circ \cos \xi
\end{pmatrix}
\end{aligned}
\end{equation*}
Then we obtain
$4 |L| |L^\circ| \geq 
(1-\cos \xi)(1- \cos \xi + a_2^\circ \sin \xi)
- \sin \xi (a_2^\circ-\sin \xi - a_2^\circ \cos \xi)
=2(1-\cos \xi)$.
\end{proof}

The following is the same equality condition as in the result \cite{BF}*{Corollary 3} or \cite{BMMR}*{Lemma 7}.
\begin{lemma}
\label{lem:3}
Under the same assumptions in Lemma \ref{lem:2}, if $\va \parallel \va^\circ$ and the equality of \eqref{eq:c} holds, then either {\upshape (i)} or {\upshape (ii)} below holds.
\begin{enumerate}[\upshape (i)]
 \item 
$L=\conv \left\{o, A, C, B\right\}$ and
$L^\circ=\conv \left\{o, A^\circ, B^\circ\right\}$, where $\vc=(\va+\vb)/(1+\cos \xi)$.
 \item 
$L=\conv \left\{o, A, B\right\}$ and
$L^\circ=\conv \left\{o, A^\circ, C^\circ, B^\circ\right\}$, where $\vc^\circ=(\va^\circ+\vb^\circ)/(1+\cos \xi)$. 
\end{enumerate}
\end{lemma}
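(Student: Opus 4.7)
The plan is to re-enter the proof of Lemma 2 and track which inequalities must become equalities. After the normalization in Lemma 2, we have $\va=\va^\circ=(1,0)$ and $\vb=\vb^\circ=(\cos\xi,\sin\xi)$: indeed $\va\parallel\va^\circ$ together with $\va\cdot\va^\circ=1$ forces $\va^\circ=\va$, and then the rotation $R$ transports this to $\vb^\circ=\vb$. The key observation is that in these coordinates $\vu:=(\sin\xi,1-\cos\xi)=\tan(\xi/2)(\va+\vb)$, so $\vu$ lies along the angle bisector of $\angle AoB$, and $|\vu|^2=2(1-\cos\xi)$.

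Next, I would identify what equality $|L|\,|L^\circ|=(1-\cos\xi)/2$ forces on the test points used in the proof of Lemma 1. The points $\vp:=\vu/(2|L^\circ|)\in K$ and $\vp^\circ:=\vu/(2|L|)\in K^\circ$ both lie on the bisector ray, and $\vp\cdot\vp^\circ=|\vu|^2/(4|L|\,|L^\circ|)$ must equal $1$ at equality. This forces $\vp\in\partial K$, $\vp^\circ\in\partial K^\circ$, with $\vp^\circ$ an outer normal direction to $K$ at $\vp$. Reading this back into the signed-area inequality from Lemma 1---that $\vu\cdot\vx/2\leq|L|$ for every $\vx\in K$---equality must be attained at some $\vx_0\in K$. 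Interpreting $\vu\cdot\vx/2$ as the signed area of the polygon with vertices $o, A, \vx, B$, which at $\vx=\vx_0$ is contained in $L$ and has area $|L|$, I would conclude $L=\conv\{o,A,\vx_0,B\}$, and symmetrically $L^\circ=\conv\{o,A,\vy_0,B\}$ for some $\vy_0\in K^\circ$.

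Then I would case-split on the shapes of $L$ and $L^\circ$: each is either the triangle $\conv\{o,A,B\}$ (when $\vx_0$ or $\vy_0$ lies on segment $AB$) or a proper quadrilateral with a unique apex above segment $AB$. The triangle--triangle case yields $|L|\,|L^\circ|=\sin^2\xi/4$, which equals $(1-\cos\xi)/2$ only when $\cos\xi=1$, ruled out since $\xi\in(0,\pi)$. In the quadrilateral--triangle case, the test point $\vp$ must simultaneously lie on $\partial L$, on the bisector ray, and satisfy $\vu\cdot\vp=2|L|$; these three conditions single out the apex $C$ of $L$. Hence $\vc=\alpha(\va+\vb)$ lies on the bisector, and matching $|L|=\alpha\sin\xi$ and $|L^\circ|=\sin\xi/2$ against $(1-\cos\xi)/2$ pins down $\alpha=1/(1+\cos\xi)$, giving case (i); the symmetric argument yields case (ii).

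The hard part will be ruling out the quadrilateral--quadrilateral case, in which both apexes $C, C^\circ$ must lie on the bisector with $\alpha\beta=1/(2(1+\cos\xi))$ and $\alpha,\beta>1/2$. Algebraically such $(\alpha,\beta)$ pairs exist, but I expect geometric incompatibility: if $L=K\cap\pos(A,B)$ is a proper quadrilateral with $\vc=\alpha(\va+\vb)$, then $\partial K$ contains the edges $AC$ and $CB$, whose dual vertices of $K^\circ$ sit strictly inside $\pos(A^\circ,B^\circ)$ and differ from $A,B$ unless $\alpha=1/(1+\cos\xi)$. These extra vertices would force $L^\circ$ to have more than four vertices, so a proper quadrilateral $L^\circ=\conv\{o,A,C^\circ,B\}$ cannot occur except in the degenerate limit $\alpha=1/(1+\cos\xi)$, already covered by case (i). Thus (i) and (ii) are the only equality cases.
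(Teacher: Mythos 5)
Your proposal follows the same skeleton as the paper's proof: normalize so that $\va^\circ=\va$ and $\vb^\circ=\vb$, observe that the two test points from Lemma \ref{lem:1} lie on the bisector ray, and deduce from the equality that $L$ and $L^\circ$ are exactly the (possibly degenerate) quadrilaterals with those test points as apexes. Your identities $\vu=\tan(\xi/2)(\va+\vb)$, $|\vu|^2=2(1-\cos\xi)$, $\vp\cdot\vp^\circ=1$, the area computation $|L|=\alpha\sin\xi$ leading to $\alpha=1/(1+\cos\xi)$ in the quadrilateral--triangle case, and the exclusion of the triangle--triangle case are all correct. The one place where you stop short of a proof is exactly the place you flag: the quadrilateral--quadrilateral case, which is a genuine case that must be excluded and which you only conjecture away. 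Your mechanism is right and can be closed as follows. Writing $\vc=\alpha(\va+\vb)$ and $\vc^\circ=\beta(\va+\vb)$ with $\alpha,\beta>1/2$ and $\alpha\beta=1/(2(1+\cos\xi))$, the constraint $\beta>1/2$ forces $\alpha<1/(1+\cos\xi)$, i.e.\ $\vc\cdot\va^\circ<1$; this is the missing justification for your claim that the poles of the edges $AC$ and $CB$ lie \emph{strictly inside} the cone (for $\alpha>1/(1+\cos\xi)$ they would lie outside, but that range is already excluded by $\va^\circ\in K^\circ$). The pole $\vy$ of the line through $A$ and $C$ then satisfies $\vy\in K^\circ\cap\pos(A,B)=\conv\{o,\va^\circ,\vc^\circ,\vb^\circ\}$ and $\vy\cdot\va=1$; since the functional $y\mapsto y\cdot\va$ is at most $1$ on that quadrilateral with equality only at $\va^\circ$, we get $\vy=\va^\circ$, contradicting $\vy\cdot\vc=1>\va^\circ\cdot\vc$. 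The paper's endgame avoids this case split altogether: since $C$ is a vertex of $K$, its dual face is an edge of $K^\circ$ through $\vc^\circ$ which must contain $A^\circ C^\circ$ or $C^\circ B^\circ$, so $\vc\cdot\va^\circ=1$ or $\vc\cdot\vb^\circ=1$; combined with $\vc\cdot\vc^\circ=1$ this pins down $\alpha=1/(1+\cos\xi)$ and $\beta=1/2$ in one stroke, so the quadrilateral--quadrilateral configuration never has to be excluded separately.
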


\begin{proof}
Under the setting of Lemma \ref{lem:2}, the assumption $\va \parallel \va^\circ$ implies that $a^\circ_2=0$.
Hence, we can take the test points in the proof of Lemma \ref{lem:1} as
\begin{equation*}
\begin{aligned}
\vc^\circ&:=
\frac{1}{2|L|}
\begin{pmatrix}
 -a_2+b_2 \\ a_1-b_1
\end{pmatrix}
=
\frac{1}{2|L|}
\begin{pmatrix}
\sin \xi \\ 1-\cos \xi
\end{pmatrix}
\in L^\circ, \\
\vc&:=
\frac{1}{2|L^\circ|}
\begin{pmatrix}
-a_2^\circ+b_2^\circ \\ a_1^\circ-b_1^\circ 
\end{pmatrix}
=
\frac{1}{2|L^\circ|}
\begin{pmatrix}
\sin \xi \\
1-\cos \xi
\end{pmatrix}
\in L
\end{aligned}
\end{equation*}
(see Remark \ref{rem:2.2}).
Then we have
\begin{equation*}
\begin{aligned}
|L| &\geq \text{the area of the quadrilateral $oACB$} = \frac{1}{2|L^\circ|}(1-\cos \xi), \\
|L^\circ| &\geq \text{the area of the quadrilateral $o A^\circ C^\circ B^\circ$} = \frac{1}{2|L|}(1-\cos \xi).
\end{aligned}
\end{equation*}
Hence, the equality holds if and only if $L$ and $L^\circ$ coincide with the quadrilaterals $oACB$ and $o A^\circ C^\circ B^\circ$, respectively.
Here note that there exists a constant $\alpha$ such that
\begin{equation*}
\vc=
\frac{1}{2|L^\circ|}
\begin{pmatrix}
\sin \xi \\
1-\cos \xi
\end{pmatrix}
= \frac{\sin (\xi/2)}{|L^\circ|}
\begin{pmatrix}
\cos (\xi/2) \\ \sin(\xi/2)
\end{pmatrix}
=\alpha \frac{\va+\vb}{2}.
\end{equation*}
Since the quadrilateral $oACB$ is convex, we have $\alpha \geq 1$.
Similarly, we obtain
\begin{equation*}
\vc^\circ = \alpha^\circ \frac{\va^\circ+\vb^\circ}{2}, \quad \alpha^\circ \geq 1.
\end{equation*}

\paragraph{Case $\alpha=1$;}
In this case, $L$ coincides with the triangle $oAB$ and the dual face of the edge $AB$ is the vertex $C^\circ$.
That is, $\va \cdot \vc^\circ =1 (=\vb \cdot \vc^\circ)$ holds and
\begin{equation*}
\alpha^\circ= \frac{2}{1+ \cos \xi}
\end{equation*}
Therefore, we obtain
\begin{equation*}
 \vc^\circ = \frac{2}{1+ \cos \xi} \frac{\va^\circ + \vb^\circ}{2} = \frac{\va^\circ + \vb^\circ}{1+\cos \xi},
\end{equation*}
which means that the condition (ii) holds.

\paragraph{Case $\alpha>1$;}
Since $C$ is a vertex of $L$, its dual face is an edge of $L^\circ$.
The edge contains the segment $A^\circ C^\circ$ or $C^\circ B^\circ$.
From $\vc \cdot \vc^\circ=1$, we have $\vc \cdot \va^\circ=1$ or $\vc \cdot \vb^\circ=1$.
It follows that
\begin{equation*}
 \alpha = \frac{2}{1+\cos \xi}, \quad \vc=\frac{\va+\vb}{1+\cos \xi}.
\end{equation*}
Then we have
\begin{equation*}
1= \vc \cdot \vc^\circ=
\frac{\alpha \alpha^\circ}{2} (1+\cos \xi) = \alpha^\circ.
\end{equation*}
Hence, $L^\circ$ coincides with the triangle $o A^\circ B^\circ$, that is, the condition (i) holds.
\end{proof}

\section{Preliminaries for the three dimensional case}
\label{sec:3}

In this section, we recall the method of ``signed volume estimate'' introduced in \cite{IS}.
The exposition here is simpler than that of \cite{IS} and applicable to various truncated cones.
Although the method can be extended to the higher dimensional case, from now on, we concentrate on the three dimensional case.

\subsection{Signed volume of the cone of a ruled surface}
\label{sec:3.1}

Given a convex body $K \in \mathcal{K}^3$ whose interior contains the origin $o$, for any $g \in O(3) \subset GL(3,\R)$, we have
\begin{equation*}
 (g K)^\circ = ({}^t\!g)^{-1} K^\circ = g K^\circ.
\end{equation*}
Hence, $K \in \mathcal{K}^3(G)$ implies that  $K^\circ \in \mathcal{K}^3(G)$ for each subgroup $G \subset O(3)$.

Let $\mathcal C$ be an oriented piecewise $C^1$-curve in $\R^3$ and $\vr(t)$ $(0 \leq t \leq 1)$ a parametrization of $\mathcal C$.
Then we define a vector $\overline{\mathcal{C}} \in \R^3$ by
\begin{equation*}
\overline{\mathcal{C}} := \frac{1}{2} \int_\mathcal{C} \vr \times d \vr = \frac{1}{2} \int_0^1 \vr(t) \times \vr'(t) \,dt,
\end{equation*}
which is independent of the choice of a parametrization of $\mathcal C$.
If the curve $\mathcal C$ is on a plane in $\R^3$ passing through the origin $o$, then $\overline{\mathcal C}$ is a normal vector of the plane.
Let us consider the ruled surface
$o*{\mathcal C}:= \{ \lambda \vu \in \R^3; \vu \in {\mathcal C}, 0 \leq \lambda \leq 1 \}.$
For any $\vx \in \R^3$, the {\it signed volume} of the solid
\begin{equation*}
\vx*(o*{\mathcal C})
:=\{ (1-\nu)\vx+\nu\vxi \in \R^3; \vxi \in o*{\mathcal C}, 0 \leq \nu \leq 1 \}
\end{equation*}
is defined by
\begin{equation*}
 \frac{1}{3} \vx \cdot \overline{\mathcal C}
= \frac{1}{6} \int_0^1 \det(\vx \ \vr(t) \ \vr'(t)) \,dt.
\end{equation*}
Note that this quantity is essential for the signed volume estimate of truncated cones explained later (see Lemma \ref{lem:4}).

Next, we examine the behavior of $\overline{\mathcal C}$ under the action of $O(3)$.
For a rotation $g \in SO(3)$, we have
\begin{equation*}
\overline{g \mathcal{C}} = g \overline{\mathcal C}.
\end{equation*}
In general, this formula does not necessarily hold for $g \in O(3)$.
However, we obtain
\begin{equation*}
\overline{V\mathcal{C}} = - V \overline{\mathcal C}, \quad
\overline{H\mathcal{C}} = - H \overline{\mathcal C},
\end{equation*}
where $V$ and $H$ are the elements of $O(3)$ defined in Section \ref{sec:1.3}.
Indeed, for a parametrization $\vr(t)$ of $\mathcal{C}$, since
\begin{equation*}
\overline{V\mathcal{C}}
= \frac{1}{2} \int_0^1 (V \vr(t)) \times (V \vr'(t)) \,dt
= \frac{1}{2} \int_0^1
\begin{pmatrix}
r_1(t) \\ -r_2(t) \\ r_3(t)
\end{pmatrix}
\times
\begin{pmatrix}
r_1'(t) \\ -r_2'(t) \\ r_3'(t)
\end{pmatrix}
\,dt,
\end{equation*}
we get the first equality.
The second one is also easily verified.

We denote by $-\mathcal{C}$ the curve in $\R^3$ with the same image of the curve $\mathcal C$ but with the opposite direction.
(Note that the notation $-\mathcal{C}$ here is different from the one used in \cite{IS}.)
Then we obtain the formula
\begin{equation*}
 \overline{-\mathcal{C}}=-\overline{\mathcal C}.
\end{equation*}

\subsection{Curves on $\partial K$ and their polars}

Let $K \in \mathcal{K}^3$ with $o \in \interior K$.
For any two points $A, B \in \partial K$ with $\va \nparallel \vb$, let us introduce an oriented curve from $A$ to $B$ on the boundary $\partial K$ defined by
\begin{equation*}
\mathcal{C}(A,B)=
\mathcal{C}_K(A,B):= \rho_K((1-t)\va + t\vb) ((1-t)\va+t\vb):  0 \leq t \leq 1,
\end{equation*}
where $\rho_K$ is the {\it radial function} of $K$ defined by
$\rho_K(\vx) := \max\{ \lambda \geq 0; \lambda \vx \in K \}$
for $\vx \in \R^3 \setminus \{ \vo \}$.
For any points $A_1,\dots,A_m \in \partial K$ with $\va_i \nparallel \va_{i+1}$, we denote by $\mathcal{C}_K(A_1,\dots,A_m)$ the oriented curve on $\partial K$ consists of successive oriented curves $\mathcal{C}(A_i,A_{i+1})$, $i=1,\ldots,m-1$;
\begin{equation*}
\mathcal{C}_K(A_1,\dots,A_m):=\mathcal{C}(A_1,A_2) \cup \cdots \cup \mathcal{C}(A_{m-1},A_m).
\end{equation*}
In particular, if $\mathcal{C}_K(A_1,\dots,A_m)$ is a simple closed curve, that is, $A_m=A_1$, then we denote by $\mathcal{S}_K(A_1,\dots,A_{m-1})$ the part of $\partial K$ enclosed by the curve such that the orientation of the part is compatible with that of the curve.

Let $K \in \K^3$ with $o \in \interior K$ and denote by $\mu_K$ its Minkowski gauge.
Note that $\rho_K(\vx)=1/\mu_K(\vx)$.
Assume that $\partial K$ is a $C^2$-hypersurface in $\R^3$.
From now on, we consider the following class of convex bodies.
\begin{equation*}
\checkK^3:=\left\{
K \in \mathcal{K}^3; o \in \interior K, \text{$K$ is strongly convex, $\partial K$ is of class $C^2$}
\right\},
\end{equation*}
where $K$ is said to be {\it strongly convex} if the Hessian matrix $D^2(\mu_K^2/2)(\vx)$ of $C^2$-function $\mu_K^2/2$ on $\R^3$ is positive definite for each $\vx \in \R^3$ with $|\vx|=1$.
For a convex body $K \in \checkK^3$, we define a $C^2$-map 
$\Lambda=\Lambda_K: \partial K \to \partial K^\circ$ by
\begin{equation*}
\Lambda(\vx)=\nabla \mu_K(\vx) \quad (\vx \in \partial K).
\end{equation*}
If $K$ is strongly convex with the boundary $\partial K$ of class $C^2$, then $K^\circ$ is strongly convex with $\partial K^\circ$ of class $C^1$, and the map $\Lambda: \partial K \to \partial K^\circ$ is a $C^1$-diffeomorphism satisfying that $\vx \cdot \Lambda(\vx)=1$ (see \cite{Sc}*{Section 1.7.2} and \cite{IS}*{Section 3.1}).
Note that the curve $\mathcal{C}(A,B)$ is in the plane passing through the three points $o$, $A$, and $B$, but its image $\Lambda(\mathcal{C}(A,B))$ into $\partial K^\circ$ is not necessarily contained in a plane of $\R^3$.

\begin{lemma}
\label{lem:Lambda}
Let $K \in \checkK^3$ be a convex body and $A, B \in \partial K$ be two points with $\va \nparallel \vb$.
Let $H \subset \R^3$ be the plane passing through the three points $o$, $A$, $B$, and $\pi_H$ denotes the orthogonal projection onto $H$.
Then $\pi_H \circ \Lambda_K = \Lambda_{K \cap H}$ holds on $\partial(K \cap H)$.
\end{lemma}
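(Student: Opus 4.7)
The plan is to reduce the lemma to two elementary facts: the Minkowski gauge of $K \cap H$ is just the restriction of $\mu_K$ to $H$, and the intrinsic gradient on a linear subspace of the restriction of a smooth function agrees with the orthogonal projection of the ambient gradient. Since $H$ is a linear plane passing through $o \in \interior K$, the set $K \cap H$ is a convex body in $H$ with $o$ in its relative interior, and $\partial(K \cap H) = \partial K \cap H$. For any $\vy \in H$ and $t > 0$ we have $\vy/t \in H$, so $\vy/t \in K \cap H$ if and only if $\vy/t \in K$; this gives $\mu_{K \cap H}(\vy) = \mu_K(\vy)$ for all $\vy \in H$, i.e.\ $\mu_{K \cap H} = \mu_K|_H$.

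Before invoking $\Lambda_{K\cap H}$ I would quickly verify that $K \cap H$ belongs to the two-dimensional analogue of $\checkK^3$. Strong convexity passes to sections because restricting the positive definite quadratic form $D^2(\mu_K^2/2)(\vx)$ to vectors tangent to $H$ yields a positive definite form, so $D^2(\mu_{K\cap H}^2/2)$ is positive definite on $H$. The $C^2$-smoothness of $\partial(K\cap H)$ inside $H$ follows from transversality of $\partial K$ and $H$: since $o \in \interior K$, no tangent plane of $\partial K$ contains $o$, and in particular $H$ is transverse to $\partial K$ along every intersection point. Hence $\Lambda_{K\cap H} \colon \partial(K\cap H) \to \partial(K\cap H)^{\circ_H}$ is well defined, where $\circ_H$ denotes polarity inside the plane $H$.

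The final step is the calculus fact that for a $C^1$ function $f$ on $\R^3$ and a linear subspace $H$, the intrinsic gradient $\nabla_H(f|_H)(\vx)$ at $\vx \in H$ equals $\pi_H(\nabla f(\vx))$; this is immediate from pairing both sides with an arbitrary $\vh \in H$ and using the chain rule, since $\nabla f(\vx) \cdot \vh = \pi_H(\nabla f(\vx)) \cdot \vh$ when $\vh \in H$. Applying this to $f = \mu_K$ at $\vx \in \partial(K\cap H)$ and combining with $\mu_{K\cap H} = \mu_K|_H$ gives
\[
\Lambda_{K\cap H}(\vx) \;=\; \nabla_H \mu_{K\cap H}(\vx) \;=\; \pi_H(\nabla \mu_K(\vx)) \;=\; \pi_H(\Lambda_K(\vx)),
\]
which is the claim. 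No genuine obstacle arises; the only point requiring care is keeping the intrinsic gradient on $H$ distinct from the ambient gradient, a distinction that the projection $\pi_H$ records exactly.
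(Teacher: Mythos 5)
Your proof is correct and follows essentially the same route as the paper: both arguments rest on the identity $\mu_{K\cap H}=\mu_K|_H$ together with the fact that the intrinsic gradient of a restriction to a linear subspace is the orthogonal projection of the ambient gradient (the paper carries this out in coordinates after rotating $H$ to the $xy$-plane, while you phrase it invariantly). Your extra verification that $K\cap H$ inherits strong convexity and $C^2$ boundary is a point the paper asserts without proof, so including it is a mild improvement rather than a divergence.
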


\begin{proof}
It suffices to consider the case that the points $A, B$ are in the $xy$-plane, i.e.,
$H=\{\vx=(x,y,z) \in \R^3; z=0\}$.
By definition, $\Lambda_K = \nabla \mu_K|_{\partial K}$ is a $C^1$-diffeomorphism from $\partial K$ to $\partial K^\circ$, where $\nabla \mu_K = (\partial_x \mu_K, \partial_y \mu_K, \partial_z \mu_K)$.
Setting $L:=K \cap H$, then $L \subset H$ is strongly convex with its boundary of class $C^2$, $o \in \interior L \subset H$, and $\mu_L = \mu_K|_H$.
Since $\nabla \mu_L = (\partial_x(\mu_K|_H), \partial_y(\mu_K|_H)): H \to H$ and $\Lambda_L := \nabla \mu_L|_{\partial L}: \partial L \to \Lambda_L(\partial L)$, we have
\begin{equation*}
\pi_H \circ \Lambda_K|_{\partial L} = (\partial_x \mu_K, \partial_y \mu_K)|_{\partial L} = \Lambda_L
\end{equation*}
on $\partial L$.
\end{proof}
We next turn to the case of $G$-invariant convex bodies in $\checkK^3$.
For a subgroup $G$ of $O(3)$,
let us introduce the class
\begin{equation*}
\checkK^3(G):= \left\{K \in \checkK^3; gK=K \text{ for all } g \in G\right\}.
\end{equation*}
For a convex body $K$ in this class, the map $\Lambda_K$ behaves $G$-equivariantly as follows.

\begin{lemma}
\label{lem:6}
Let $K \in \checkK^3(G)$.
For any $\vx \in \partial K$ and $g \in G$, we obtain
\begin{equation}
\label{eq:16}
g \Lambda_K(\vx)
= \Lambda_K(g \vx)
\end{equation}
\end{lemma}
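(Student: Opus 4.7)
The plan is to exploit the fact that the Minkowski gauge $\mu_K$ is itself $G$-invariant, and then differentiate. Since $K = gK$ for every $g \in G$, the definition $\mu_K(\vx) = \inf\{\lambda > 0 : \vx \in \lambda K\}$ gives $\mu_K(g\vx) = \mu_K(\vx)$ for all $\vx \in \R^3$. This identity extends from $\partial K$ to all of $\R^3$ (in fact $\mu_K$ is positively $1$-homogeneous), so we may freely differentiate it on the ambient space.

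Next I would apply the chain rule to $\mu_K \circ g = \mu_K$, viewing $g \in O(3)$ as a linear map with matrix $g$. Differentiating at $\vx$ yields
\begin{equation*}
(\nabla \mu_K)(g\vx)^{T} g = (\nabla \mu_K)(\vx)^{T},
\end{equation*}
equivalently $g^{T} (\nabla \mu_K)(g\vx) = (\nabla \mu_K)(\vx)$.

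Finally I would use the orthogonality $g^{T} = g^{-1}$ to rewrite this as $(\nabla \mu_K)(g\vx) = g\, (\nabla \mu_K)(\vx)$. Since $\Lambda_K = \nabla \mu_K$ on $\partial K$, this is exactly \eqref{eq:16}. (Note $g \vx \in \partial K$ whenever $\vx \in \partial K$, because $g$ preserves $K$ and hence its boundary, so both sides are well-defined as elements of $\partial K^{\circ}$.)

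There is no real obstacle: the only thing to check carefully is the direction of the matrix multiplication in the chain rule and the use of $g^{T} = g^{-1}$, which converts the covariant transformation of $\nabla \mu_K$ into the contravariant transformation (in fact the same transformation, since $g$ is orthogonal) of $\Lambda_K$. Smoothness of $\mu_K$ at $\vx \neq \vo$ is guaranteed by the hypothesis $K \in \checkK^3$, so differentiation is legitimate.
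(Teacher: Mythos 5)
Your proposal is correct and follows essentially the same route as the paper: both exploit the $G$-invariance of the Minkowski gauge ($\mu_K(g\vx)=\mu_K(\vx)$, which the paper phrases as $\mu_K(\vx)=\mu_{gK}(g\vx)$ specialized to $gK=K$), differentiate via the chain rule to get $g^{T}\nabla\mu_K(g\vx)=\nabla\mu_K(\vx)$, and then use $g^{T}=g^{-1}$ to conclude. The chain-rule direction and the restriction of $\Lambda_K=\nabla\mu_K$ to $\partial K$ are handled exactly as in the paper's argument.
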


\begin{proof}
Let $K \in \checkK^3$.
We first suppose that $\vx \in \R^3 \setminus \{0\}$ and $g \in O(3)$.
By the definition of $\mu_K$, we have $\mu_K(\vx)=\mu_{gK}(g\vx)$.
By differentiating it,
\begin{equation*}
\nabla \mu_K(\vx) = {}^t\!g  \nabla \mu_{gK} (g \vx) = 
g^{-1} \nabla \mu_{gK} (g \vx)
\end{equation*}
holds, because $g \in O(3)$.
Here, assume that $K \in \checkK^3(G)$, $\vx \in \partial K$, and $g \in G$.
Then $gK=K$ and $g \vx \in \partial K$ hold.
Since $\Lambda_K = \nabla \mu_K$ on $\partial K$, we obtain the equality \eqref{eq:16}.
\end{proof}

Not every $G$-invariant convex body $K$ is equipped with the map $\Lambda_K$.
However, owing to the following approximation result, which is a special case of \cite{Sc2}*{pp.\,438}, it suffices to consider only the class $\checkK^3(G)$ for the purpose of this paper.

\begin{proposition}[Schneider]
\label{prop:sch}
Let $G$ be a discrete subgroup of $O(3)$.
Let $K \in \K^3(G)$ be a $G$-invariant convex body.
Then, for any $\varepsilon > 0$ there exists a $G$-invariant convex body $K_\epsilon \in \checkK^3(G)$ having the property that $\delta(K,K_\epsilon) < \varepsilon$, where $\delta$ denotes the Hausdorff distance on $\K^3$.
\end{proposition}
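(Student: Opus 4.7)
The plan is to first approximate $K$ by a smooth, strongly convex body ignoring the $G$-action, and then to symmetrize that approximation by Minkowski averaging over $G$. Since $O(3)$ is compact, every discrete subgroup $G \subset O(3)$ is automatically finite, so the averaging is a well-defined finite operation.

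I would first invoke the classical (non-equivariant) approximation theorem cited from \cite{Sc2} to obtain $\tilde{K} \in \checkK^3$ with $\delta(K,\tilde{K}) < \varepsilon$; since $o \in \interior K$, shrinking $\varepsilon$ ensures $o \in \interior \tilde{K}$ as well. Then I would set
\begin{equation*}
K_\varepsilon := \frac{1}{|G|} \sum_{g \in G} g\tilde{K}
\end{equation*}
(Minkowski average). The $G$-invariance is immediate, since for any $h \in G$ the map $g \mapsto hg$ permutes $G$ and the Minkowski sum commutes with the linear action of $h$.

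To check that $K_\varepsilon \in \checkK^3$, note first that each $g\tilde{K}$ belongs to $\checkK^3$ because $g \in O(3)$ is an isometry. On the level of support functions, Minkowski summation corresponds to addition: $h_{K_\varepsilon} = (1/|G|)\sum_{g \in G} h_{g\tilde{K}}$. Each summand is $C^2$ on $\R^3 \setminus \{o\}$ with positive-definite spherical Hessian, and both properties pass to finite convex combinations; via the standard equivalence between smoothness/positive curvature of $\partial K_\varepsilon$ and of the support function, this yields $\partial K_\varepsilon \in C^2$ with positive Gauss curvature, i.e., strong convexity in the sense of the paper. The origin stays interior since it lies in the interior of every summand. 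For the Hausdorff estimate, $\delta$ is $O(3)$-invariant and non-expansive under Minkowski averaging; using $K = (1/|G|)\sum_g gK$ by $G$-invariance of $K$, we obtain
\begin{equation*}
\delta(K_\varepsilon, K) \leq \max_{g \in G} \delta(g\tilde{K}, gK) = \delta(\tilde{K}, K) < \varepsilon.
\end{equation*}

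The main point to verify is that $\checkK^3$ is closed under Minkowski averaging, which amounts to translating between the gauge-based definition of strong convexity used in the paper and the equivalent condition on the support function. This is classical, so no genuine obstacle arises beyond citing the base approximation result and writing out this routine verification.
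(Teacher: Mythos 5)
Your argument is correct, but it is genuinely different from what the paper does: the paper offers no proof at all, simply citing Schneider's symmetry-preserving approximation theorem (\cite{Sc2}, p.~438), whose construction smooths the support function by a convolution-type mean over small rigid motions and observes that this operation preserves every symmetry of $K$ simultaneously. You instead invoke only the plain, non-equivariant approximation and then restore $G$-invariance by the Minkowski average $\frac{1}{|G|}\sum_{g\in G} g\tilde K$, using that a discrete subgroup of the compact group $O(3)$ is finite. All the steps you list check out: $G$-invariance of the average, the identity $\frac{1}{|G|}\sum_g gK = K$ (which uses convexity of $K$, so that the sum of $|G|$ copies of $K$ is $|G|K$), the estimate $\delta(K_\varepsilon,K)\le\delta(\tilde K,K)$ via additivity of support functions, and closure of the class $C^2_+$ under Minkowski addition read off from the spherical Hessians of the support functions. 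The one point you correctly flag as needing verification -- that the paper's gauge-based notion of strong convexity (positive definiteness of $D^2(\mu_K^2/2)$ on the unit sphere) together with $\partial K\in C^2$ and $o\in\interior K$ is equivalent to membership in $C^2_+$ -- is indeed classical: since $D^2(\mu_K^2/2)$ splits as a block sum of the radial part $\nabla\mu_K\otimes\nabla\mu_K$ and the tangential part $\mu_K D^2\mu_K$, its positive definiteness amounts exactly to positivity of all principal curvatures of $\partial K$. What each approach buys: Schneider's operator works for arbitrary (even infinite, e.g.\ continuous) symmetry groups and all dimensions in one stroke, whereas your averaging crucially needs $|G|<\infty$; in exchange, your route is self-contained modulo the standard non-equivariant approximation theorem and avoids having to inspect Schneider's construction to confirm equivariance. (As a side remark, both routes -- and indeed the statement itself -- implicitly assume $o\in\interior K$, since $\checkK^3$ requires the origin to be interior; this is how the proposition is used throughout Section \ref{sec:4}.)
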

In Section \ref{sec:4}, we frequently use Proposition \ref{prop:sch} for various discrete subgroups of $O(3)$.
\begin{remark}
 Note that Lemmas \ref{lem:Lambda} and \ref{lem:6}, and Proposition \ref{prop:sch} hold for $n$-dimensional case by the same argument.
\end{remark}

\subsection{The signed area estimate}

Let $K \in \checkK^3$.
If we apply the method of signed volume estimate to $K$ (see Section \ref{sec:3.4}), then the lower bound estimate of the volume product $\mathcal{P}(K)$ is reduced to the estimation of some two dimensional situation.
Here we prepare such an estimate.

\begin{proposition}
\label{prop:1}
Let $K \in \checkK^3$.
For any two points $A, B \in \partial K$ with $\va \nparallel \vb$,
it holds that
\begin{equation*}
 \overline{\mathcal{C}(A,B)} \cdot \overline{\Lambda(\mathcal{C}(A,B))} \geq (\va-\vb) \cdot \frac{\Lambda(\va) - \Lambda(\vb)}{4}.
\end{equation*}
Moreover, if $|\va|=|\vb|$, $\va \parallel \Lambda(\va)$, and $\vb \parallel \Lambda(\vb)$ hold, then we obtain
\begin{equation*}
 \overline{\mathcal{C}(A,B)} \cdot \overline{\Lambda(\mathcal{C}(A,B))} \geq \frac{1}{2} (1-\cos \xi),
\end{equation*}
where $\xi$ is the angle between $\va$ and $\vb$.
\end{proposition}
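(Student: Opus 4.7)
The plan is to slice $K$ by the plane $H$ through $o$, $A$, and $B$, and reduce the three-dimensional inner product $\overline{\mathcal{C}(A,B)} \cdot \overline{\Lambda(\mathcal{C}(A,B))}$ to the two-dimensional product $|L|\,|L^\circ|$, on which Lemma \ref{lem:1} (and Lemma \ref{lem:2} under the extra hypotheses) can be applied directly inside $H$.

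The first step is to note that $\mathcal{C}(A,B)$ lies in $H$, so $\overline{\mathcal{C}(A,B)}$ must be a scalar multiple of the unit normal $\vn := (\va \times \vb)/|\va \times \vb|$. A direct computation with the parametrization $\vr(t) = \rho_K(\vu(t))\vu(t)$, $\vu(t) = (1-t)\va+t\vb$, gives $\vr(t) \times \vr'(t) = \rho_K(\vu(t))^2(\va \times \vb)$, and a standard radial change of variables identifies the resulting integral with the area of $L := K \cap H \cap \pos(A,B)$, yielding $\overline{\mathcal{C}(A,B)} = |L|\,\vn$.

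The second step evaluates the $\vn$-component of $\overline{\Lambda(\mathcal{C}(A,B))}$, which is all that matters after pairing with $|L|\,\vn$. Writing $\vs(t) := \Lambda_K(\vr(t))$, the integrand $\det(\vn, \vs, \vs')$ is unchanged upon replacing $\vs$ by its orthogonal projection $\pi_H \vs$, since $\vn$-components produce a vanishing column. By Lemma \ref{lem:Lambda}, $\pi_H \vs = \Lambda_{K \cap H}(\vr)$, which is a curve lying in $H$ on the boundary of the planar polar $(K \cap H)^\circ$ (taken within $H$) running from $A_H^\circ := \pi_H(\Lambda(\va))$ to $B_H^\circ := \pi_H(\Lambda(\vb))$. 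Repeating the radial-area computation inside $H$, and using that the two-dimensional gradient map $\Lambda_{K \cap H}$ of a smooth strongly convex planar body preserves boundary orientation, one obtains $\vn \cdot \overline{\Lambda(\mathcal{C}(A,B))} = |L^\circ|$, where $L^\circ := (K \cap H)^\circ \cap \pos(A_H^\circ, B_H^\circ)$. Combining the two steps gives the key identity $\overline{\mathcal{C}(A,B)} \cdot \overline{\Lambda(\mathcal{C}(A,B))} = |L|\,|L^\circ|$.

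Finally, since $\va, \vb \in H$ one has $\va \cdot A_H^\circ = \va \cdot \Lambda(\va) = 1$ and $\vb \cdot B_H^\circ = 1$, so Lemma \ref{lem:1} applied inside $H$ to the pair $(K \cap H, (K \cap H)^\circ)$ gives $|L|\,|L^\circ| \geq \tfrac{1}{4}(\va-\vb) \cdot (A_H^\circ - B_H^\circ)$, and since $\va-\vb \in H$ the projection $\pi_H$ may be dropped on the right, proving the first inequality. Under the extra hypotheses, $\va \parallel \Lambda(\va)$ combined with $\va \cdot \Lambda(\va) = 1$ forces $\Lambda(\va) = \va/|\va|^2 \in H$, and similarly for $\vb$; since $|\va| = |\vb|$, the rotation in $H$ by angle $\xi$ sends $A$ to $B$ and simultaneously sends $A_H^\circ = \Lambda(\va)$ to $B_H^\circ = \Lambda(\vb)$, so Lemma \ref{lem:2} delivers $|L|\,|L^\circ| \geq (1-\cos \xi)/2$. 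The main obstacle I anticipate is purely bookkeeping: checking signs and orientations consistently so that the same unit normal $\vn$ produces $+|L|$ and $+|L^\circ|$ in the two radial-area computations, which rests on the orientation-preserving character of the planar polarity map $\Lambda$ for a $C^2$ strongly convex body.
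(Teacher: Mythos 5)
Your proposal is correct and follows essentially the same route as the paper's proof: reduce to the plane $H$ through $o$, $A$, $B$, use Lemma \ref{lem:Lambda} to identify $\pi_H\circ\Lambda_K$ with the planar polar map $\Lambda_{K\cap H}$, establish the identity $\overline{\mathcal{C}(A,B)}\cdot\overline{\Lambda(\mathcal{C}(A,B))}=|L|\,|L^\circ|$ (the paper phrases the $\vn$-component of $\overline{\Lambda(\mathcal{C}(A,B))}$ as the area of the projected surface, which is your determinant observation), and then invoke Lemmas \ref{lem:1} and \ref{lem:2}. The orientation/sign bookkeeping you flag is indeed the only delicate point, and it is handled implicitly in the paper as well.
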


\begin{proof}
Let $H \subset \R^3$ be the plane that contains the curve $\mathcal{C}(A,B)$ and the origin $o$.
By Lemma \ref{lem:Lambda}, we have
\begin{equation*}
\begin{aligned}
o*\pi_H(\Lambda_K(\mathcal{C}(A,B)))
&= o*\Lambda_{K \cap H}(\mathcal{C}(A,B)) \\
&= (K \cap H)^{o_H} \cap \pos(\Lambda_{K \cap H}(A), \Lambda_{K \cap H}(B)),
\end{aligned}
\end{equation*}
where $(K \cap H)^{o_H} := \left\{ y \in H; y \cdot x \leq 1 \text{ for any } x \in K \cap H \right\}$.
Putting $L := (K \cap H) \cap \pos(A, B)$, the convex set $o*\pi_H(\Lambda_K(\mathcal{C}(A,B)))$ is the corresponding $L^\circ$ in Lemma \ref{lem:1}.
Thus,
\begin{equation*}
 |L| \, |L^\circ| \geq (\va-\vb) \cdot \frac{\Lambda(\va) - \Lambda(\vb)}{4}
\end{equation*}
holds.
Moreover, if $\va \parallel \Lambda(\va)$ and $\vb \parallel \Lambda(\vb)$ hold, then we further get $o*\pi_H(\Lambda_K(\mathcal{C}(A,B))) = (K \cap H)^{o_H} \cap \pos(A, B)$.
In this case, by means of Lemma \ref{lem:2}, we can replace the right-hand side of the above inequality by $(1-\cos \xi)/2$.

Finally, we have to check that $\overline{\mathcal{C}(A,B)} \cdot \overline{\Lambda(\mathcal{C}(A,B))} = |L| \, |L^\circ|$.
Since $K$ has its boundary of class $C^2$, then the plane curve $\mathcal{C}(A,B)$ on $\partial K$ is of class $C^2$ and, by definition, we can represent the vector $\overline{\mathcal{C}(A,B)} \in \R^3$ as
\begin{equation}
\label{eq:d}
\overline{\mathcal{C}(A,B)} = |o*\mathcal{C}(A,B)| \vn, \quad
\vn := \frac{\va \times \vb}{|\va \times \vb|}.
\end{equation}
Note that $o*\mathcal{C}(A,B) = K \cap \pos(A, B) = L$ and $\vn$ is the unit normal vector of the plane $H$.
On the other hand, let $\vr(t)$ be a parametrization of the $C^1$-curve $\Lambda(\mathcal{C}(A,B))$ on $\partial K^\circ$ with $\vr(0)=\Lambda(\va)$ and $\vr(1)=\Lambda(\vb)$.
Then we have
\begin{equation*}
\overline{\Lambda(\mathcal{C}(A,B))} \cdot \vn
=
\frac{1}{2} \int_0^1 (\vr(t) \times \vr'(t)) \cdot \vn \,dt.
\end{equation*}
This quantity is nothing but the area of the projection image of the surface $o*\Lambda(\mathcal{C}(A,B))$ to $H$, which is a convex set in $H$.
Consequently,
\begin{equation*}
\overline{\mathcal{C}(A,B)} \cdot \overline{\Lambda(\mathcal{C}(A,B))}
=
|o*\mathcal{C}(A,B)| \vn \cdot \overline{\Lambda(\mathcal{C}(A,B))}
=
|L| \, |o*\pi_H(\Lambda(\mathcal{C}(A,B)))|
=
|L| \, |L^\circ|.
\end{equation*}
\end{proof}

Furthermore, from Lemma \ref{lem:3} we obtain the following
\begin{proposition}
\label{prop:2}
Let $K \in \mathcal{K}^3$.
Let $\va, \vb \in \partial K$ with $\va \nparallel \vb$ and $|\va|=|\vb|$.
Assume that $\va^\circ, \vb^\circ \in \partial K^\circ$ satisfies
$\va \cdot \va^\circ = \vb \cdot \vb^\circ =1$,
$\va \parallel \va^\circ$, and $\vb \parallel \vb^\circ$.
Let $H \subset \R^3$ be the plane passing through the three points $o$, $A$, $B$, and $\pi_H$ denotes the orthogonal projection onto $H$.
Then, setting
\begin{equation*}
L:=K \cap \pos(A,B), \quad
L^\circ:=\pi_H(K^\circ) \cap \pos(A,B),
\end{equation*}
we have
\begin{equation*}
|L| \, |L^\circ| \geq \frac{1}{2} (1-\cos \xi),
\end{equation*}
where $\xi$ is the angle between $\va$ and $\vb$.
The equality holds if and only if either the following {\upshape (i)} or {\upshape (ii)} is satisfied$:$
\begin{enumerate}[\upshape (i)]
 \item 
$L=\conv \left\{o, A, C, B\right\}$ and
$L^\circ= \conv \left\{o, A^\circ, B^\circ\right\}$, where $\vc=(\va+\vb)/(1+\cos \xi)$.
 \item 
$L=\conv \left\{o, A, B\right\}$ and
$L^\circ= \conv \left\{o, A^\circ, C^\circ, B^\circ\right\}$, where $\vc^\circ=(\va^\circ+\vb^\circ)/(1+\cos \xi)$. 
\end{enumerate}
\end{proposition}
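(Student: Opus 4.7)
The plan is to reduce Proposition \ref{prop:2} to the two-dimensional Lemmas \ref{lem:2} and \ref{lem:3} applied inside the plane $H$. First, I would observe that $\va \parallel \va^\circ$ together with $\va \cdot \va^\circ = 1$ forces $\va^\circ = \va/|\va|^2$, and similarly $\vb^\circ = \vb/|\vb|^2$. Since $\va, \vb \in H$, this places $\va^\circ, \vb^\circ$ in $H$ as well. Combined with $|\va|=|\vb|$, if $R$ denotes the rotation of $H$ about $o$ by the angle $\xi$ sending $A$ to $B$, then automatically $R(A^\circ)=B^\circ$. All four distinguished points thus live in $H$ and are related by a common planar rotation.

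Second, I would identify $L^\circ$ as the planar polar truncated cone associated with $K\cap H$. Writing $(K\cap H)^{o_H}$ for polarity inside $H$, the classical duality between sections and projections of polars gives $\pi_H(K^\circ) = (K\cap H)^{o_H}$, hence
\begin{equation*}
 L^\circ = (K\cap H)^{o_H} \cap \pos(A,B).
\end{equation*}
The inclusion $\pi_H(K^\circ) \subseteq (K\cap H)^{o_H}$ is immediate from $\pi_H(z)\cdot x = z\cdot x$ for $x\in H$; for the reverse inclusion, given $y \in (K\cap H)^{o_H}$, the map $h\mapsto \sup_{x\in K}(y+h)\cdot x$ on $H^\perp$ is convex and coercive (since $K$ has nonempty interior), its minimum is attained, and a subgradient optimality check at the minimizer produces an active point in $K\cap H$ showing the minimum is at most $1$, which yields a lift $y+h \in K^\circ$. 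With this identification, both $L$ and $L^\circ$ become two-dimensional convex sets in $H$ whose $H$-areas coincide with the volumes $|L|$ and $|L^\circ|$ appearing in the statement.

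Third, with all the data now intrinsic to $H$, I would apply Lemma \ref{lem:2} to the planar body $K\cap H$ and the points $A,B,A^\circ,B^\circ$ paired by the planar rotation $R$, which immediately yields $|L|\,|L^\circ|\geq (1-\cos\xi)/2$. For the equality characterization, the assumption $\va \parallel \va^\circ$ is precisely the extra hypothesis of Lemma \ref{lem:3}, so equality forces one of its two alternatives, and these transcribe verbatim into cases (i) and (ii) of Proposition \ref{prop:2}.

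The main obstacle is the duality $\pi_H(K^\circ)=(K\cap H)^{o_H}$ for a general $K \in \mathcal{K}^3$ without smoothness assumptions: unlike the smooth pointwise version provided by Lemma \ref{lem:Lambda} and used in Proposition \ref{prop:1}, the general case must be obtained via the compactness/separation argument sketched above (or equivalently by approximation via Proposition \ref{prop:sch}, though one then has to track the equality case through the limit). Once this lifting is in hand, the rest of the proof is a clean transfer of the two-dimensional lemmas to three dimensions.
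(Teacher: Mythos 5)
Your proposal is correct and follows essentially the same route as the paper, which derives Proposition \ref{prop:2} directly from Lemma \ref{lem:3} after placing $A,B,A^\circ,B^\circ$ in the plane $H$ and identifying $L^\circ$ with the planar polar truncated cone. The only difference is that you make explicit the section--projection duality $\pi_H(K^\circ)=(K\cap H)^{o_H}$ for general (non-smooth) $K$, which the paper leaves implicit (it only proves the smooth pointwise version in Lemma \ref{lem:Lambda}); your coercivity/subgradient argument for the reverse inclusion is a valid way to supply that standard fact.
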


\begin{remark}
In the setting of Proposition \ref{prop:2}, $\pos (A,B)=\pos(A^\circ, B^\circ)$ holds.
\end{remark}

\subsection{The signed volume estimate}
\label{sec:3.4}

Now we are in position to state the ``signed volume estimate'', which is a natural generalization of the standard volume estimate used in \cite{Me1986}*{I.2.\,Th\'eor\`eme} and \cite{BF}*{Lemma 11}.

\begin{lemma}[\cite{IS}*{Proposition 3.2}]
\label{lem:4}
Let $K \in \checkK^3$.
Let $\mathcal C$ be a piecewise $C^1$, oriented, simple closed curve on $\partial K$.
Let $\mathcal{S}_K(\mathcal{C}) \subset \partial K$ be a piece of surface enclosed by the curve $\mathcal C$ such that the orientation of the surface is compatible with that of $\mathcal C$.
Then for any point $\vx \in K$, the inequality
\begin{equation*}
\frac{\vx \cdot \overline{\mathcal C}}{3}  \leq |o*\mathcal{S}_K(\mathcal{C})|
\end{equation*} 
holds.
\end{lemma}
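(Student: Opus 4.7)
The plan is to rewrite both sides of the claimed inequality as surface integrals over $\mathcal{S}_K(\mathcal{C})$ and then reduce the problem to a pointwise convexity estimate. The key ingredients are Stokes' theorem, the divergence theorem, and the supporting hyperplane property of the convex body $K$.

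First, I would establish the vector area identity
\[
\overline{\mathcal{C}} \;=\; \int_{\mathcal{S}_K(\mathcal{C})} \vn(\vy)\,dA(\vy),
\]
where $\vn$ denotes the outward unit normal of $\partial K$ along $\mathcal{S}_K(\mathcal{C})$. This is Stokes' theorem applied componentwise to the vector-valued $1$-form $\tfrac{1}{2}\vr\times d\vr$, whose exterior derivative is the vector of coordinate $2$-forms $(dy\wedge dz,\,dz\wedge dx,\,dx\wedge dy)$ whose integral over $\mathcal{S}_K(\mathcal{C})$ equals $\int \vn\,dA$. The regularity hypotheses ($\mathcal{C}$ piecewise $C^1$, $\partial K$ of class $C^2$) make Stokes applicable, and the compatibility-of-orientation assumption guarantees that $\vn$ is the outward normal. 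Second, I would compute $|o*\mathcal{S}_K(\mathcal{C})|$ via the divergence theorem applied to the region $\Omega := o*\mathcal{S}_K(\mathcal{C})$. Its boundary decomposes as $\mathcal{S}_K(\mathcal{C})\cup(o*\mathcal{C})$, and on the ruled piece $o*\mathcal{C}$ every point $\vy = s\vc(t)$ is tangent to the surface along the ruling, so $\vy\cdot\vn = 0$ there. Applying the divergence theorem to the field $\vy\mapsto\vy$ (divergence $3$) therefore yields
\[
3\,|o*\mathcal{S}_K(\mathcal{C})| \;=\; \int_{\mathcal{S}_K(\mathcal{C})}\vy\cdot\vn(\vy)\,dA.
\]

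Combining the two identities, for any $\vx\in K$ one obtains
\[
|o*\mathcal{S}_K(\mathcal{C})| - \frac{\vx\cdot\overline{\mathcal{C}}}{3}
\;=\; \frac{1}{3}\int_{\mathcal{S}_K(\mathcal{C})}\bigl(\vy - \vx\bigr)\cdot\vn(\vy)\,dA.
\]
Since $\vn(\vy)$ is the outward normal of the convex body $K$ at $\vy\in\partial K$ and $\vx\in K$, the supporting hyperplane property gives $(\vy-\vx)\cdot\vn(\vy)\ge 0$ pointwise on $\mathcal{S}_K(\mathcal{C})$, so the integral is non-negative and the desired inequality follows.

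The step I expect to be most delicate is not analytic but bookkeeping: one must verify that the normal $\vn$ produced by Stokes in the vector area identity coincides with the outward normal to $K$ used by the divergence theorem in the cone volume identity. This is exactly the content of the ``orientation compatible with that of $\mathcal{C}$'' clause in the hypothesis, but it has to be tracked explicitly (for instance by orienting $\mathcal{C}$ so that it winds counterclockwise as seen from outside $K$ through $\mathcal{S}_K(\mathcal{C})$). Once the orientation convention is fixed, the remainder of the argument is a direct transcription of the two classical integral theorems together with convexity.
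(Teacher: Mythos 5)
Your proof is correct: the vector-area identity $\overline{\mathcal{C}}=\int_{\mathcal{S}_K(\mathcal{C})}\vn\,dA$ from Stokes, the cone-volume identity $3\,|o*\mathcal{S}_K(\mathcal{C})|=\int_{\mathcal{S}_K(\mathcal{C})}\vy\cdot\vn\,dA$ from the divergence theorem (the radial field being tangent to the ruled lateral face), and the supporting-hyperplane inequality $(\vy-\vx)\cdot\vn(\vy)\ge 0$ for $\vx\in K$ combine to give exactly the claimed bound, and the regularity assumed in $\checkK^3$ together with piecewise $C^1$ smoothness of $\mathcal{C}$ suffices for both integral theorems. The paper does not reprove this lemma but defers to \cite{IS}*{Proposition 3.2}, whose signed-volume argument is essentially the computation you carried out, so your write-up matches the intended proof.
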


\begin{remark}
Although in \cite{IS}*{Proposition 3.2} the boundary $\partial K$ is assumed to be $C^\infty$, the proof works without any changes under the assumptions that $K \in \checkK^3$ and the curve $\mathcal C$ is piecewise $C^1$.
\end{remark}

\begin{lemma}
\label{lem:7}
Under the same assumptions as Lemma \ref{lem:4}, the following inequality holds:
\begin{equation*}
|o*\mathcal{S}_K(\mathcal{C})| \, |o*\mathcal{S}_{K^\circ}(\Lambda(\mathcal{C}))|
\geq \frac{1}{9} \overline{\mathcal{C}} \cdot \overline{\Lambda(\mathcal{C})}.
\end{equation*} 
\end{lemma}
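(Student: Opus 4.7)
The plan is to apply Lemma \ref{lem:4} twice and combine the two estimates via the biduality identity $K^{\circ\circ}=K$: once to $(K,\mathcal{C})$ itself, and once to the polar pair $(K^\circ,\Lambda(\mathcal{C}))$. Before doing this, I would first verify the hypotheses for the second application. By classical regularity results in convex geometry, $K\in\checkK^3$ forces $K^\circ$ also to be strongly convex with $C^2$ boundary, so $K^\circ\in\checkK^3$; and since $\Lambda=\Lambda_K\colon\partial K\to\partial K^\circ$ is a $C^1$-diffeomorphism, the image $\Lambda(\mathcal{C})$ is a piecewise $C^1$, simple closed curve on $\partial K^\circ$. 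One should also check that, with the orientation inherited from $\mathcal{C}$ through $\Lambda$, the enclosed surface $\mathcal{S}_{K^\circ}(\Lambda(\mathcal{C}))$ is compatibly oriented, so that the conclusion of Lemma \ref{lem:4} has the expected sign.

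Assuming this, applying Lemma \ref{lem:4} to $(K^\circ,\Lambda(\mathcal{C}))$ yields, for every $\vy\in K^\circ$,
\[
\frac{\vy\cdot\overline{\Lambda(\mathcal{C})}}{3}\leq |o*\mathcal{S}_{K^\circ}(\Lambda(\mathcal{C}))|.
\]
Setting
\[
\vp:=\frac{\overline{\Lambda(\mathcal{C})}}{3\,|o*\mathcal{S}_{K^\circ}(\Lambda(\mathcal{C}))|},
\]
this inequality rewrites as $\vy\cdot\vp\leq 1$ for all $\vy\in K^\circ$, hence by the bipolar theorem $\vp\in K^{\circ\circ}=K$. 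Now use $\vp$ as the test point in Lemma \ref{lem:4} applied to $(K,\mathcal{C})$:
\[
\frac{\vp\cdot\overline{\mathcal{C}}}{3}\leq|o*\mathcal{S}_K(\mathcal{C})|.
\]
Substituting the definition of $\vp$ and multiplying through by $|o*\mathcal{S}_{K^\circ}(\Lambda(\mathcal{C}))|$ gives exactly
\[
|o*\mathcal{S}_K(\mathcal{C})|\,|o*\mathcal{S}_{K^\circ}(\Lambda(\mathcal{C}))|\geq\frac{1}{9}\,\overline{\mathcal{C}}\cdot\overline{\Lambda(\mathcal{C})},
\]
which is the claim.

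The whole argument is thus a short duality trick, with Lemma \ref{lem:4} doing all the nontrivial work. The main obstacle I would anticipate is purely a bookkeeping one at the preparatory step: checking that $K^\circ$ genuinely satisfies the $C^2$ regularity required by Lemma \ref{lem:4}, and that the image curve $\Lambda(\mathcal{C})$ and the surface $\mathcal{S}_{K^\circ}(\Lambda(\mathcal{C}))$ receive the correct compatible orientation so that the sign of $\overline{\Lambda(\mathcal{C})}$ matches the convention in Lemma \ref{lem:4}. Both facts are standard but worth stating explicitly before the short computation above.
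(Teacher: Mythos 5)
Your argument is correct and is essentially the paper's own proof: both apply Lemma \ref{lem:4} twice, feeding the normalized test vector produced by one application into the other, the only (immaterial) difference being that you extract the test point from the $(K^\circ,\Lambda(\mathcal{C}))$ side first while the paper starts from $(K,\mathcal{C})$ and uses $\overline{\mathcal{C}}/(3|o*\mathcal{S}_K(\mathcal{C})|)\in K^\circ$. Your preliminary remarks on the regularity of $K^\circ$ and the orientation of $\Lambda(\mathcal{C})$ address exactly the hypotheses the paper uses implicitly, so nothing is missing.
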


\begin{proof}
By Lemma \ref{lem:4}, we have
$\vx \cdot \overline{\mathcal{C}}/3 \leq |o*\mathcal{S}_K(\mathcal{C})|$ for any $\vx \in K$.
Hence we get a test vector $\overline{\mathcal{C}}/(3 |o*\mathcal{S}_K(\mathcal{C})|) \in K^\circ$.
By the assumption, the $C^1$-map $\Lambda: \partial K \to \partial K^\circ$ can be defined.
By applying Lemma \ref{lem:4} to a piece of surface
$\mathcal{S}_{K^\circ}(\Lambda(\mathcal{C})) \subset \partial K^\circ$, we obtain
\begin{equation*}
 \frac{\overline{\mathcal{C}}}{3 |o*\mathcal{S}_K(\mathcal{C})|} \cdot 
\frac{\overline{\Lambda(\mathcal{C})}}{3} \leq |o*\mathcal{S}_{K^\circ}(\Lambda(\mathcal{C}))|,
\end{equation*}
as claimed.
\end{proof}

In Section \ref{sec:3.1}, we defined the vector $\overline{\mathcal{C}} \in \R^3$ for a piecewise $C^1$-curve $\mathcal{C} \subset \R^3$ by means of line integral.
Let $K \in \mathcal{K}^3$ be a convex body.
Let $A, B \in \partial K$ with $\va \nparallel \vb$.
As we explained in the proof of Proposition \ref{prop:1}, if $\partial K$ is of class $C^1$, then the vector $\overline{\mathcal{C}(A,B)}$ is given by the formula \eqref{eq:d}.
Note that the right-hand side of \eqref{eq:d} is defined whenever the curve $\mathcal{C}(A,B)$ is at least continuous.
Hence, we define the vector $\overline{\mathcal{C}(A,B)} \in \R^3$ by the formula \eqref{eq:d} for such a non-smooth case.

Then we have the following fact, which is necessary for determining the equality conditions of Theorem \ref{thm:2} (see Section \ref{sec:5}).

\begin{lemma}[cf. \cite{IS}*{Lemma 6.2}]
\label{lem:5}
Let $K \in \mathcal{K}^3$ and $A_1, A_2, A_3 \in \partial K$.
Assume that $\mathcal{C}(A_1,A_2,A_3,A_1)$ is a simple closed curve on $\partial K$, that is,
$\mathcal{S}_K(A_1, A_2, A_3)$ is a ``triangle'' on $\partial K$.
Then, for any $\vx \in K$ we have 
\begin{equation*}
 \frac{\vx \cdot \left(
\overline{\mathcal{C}(A_1, A_2)}+\overline{\mathcal{C}(A_2, A_3)}+\overline{\mathcal{C}(A_3, A_1)}
\right)}{3} \leq |o*\mathcal{S}_K(A_1, A_2, A_3)|
\end{equation*}
with equality if and only if $o*\mathcal{S}_K(A_1, A_2, A_3) =
\vx_0 * (o*\mathcal{C}(A_1, A_2, A_3, A_1))$
for some $\vx_0 \in \mathcal{S}_K(A_1, A_2, A_3)$.
\end{lemma}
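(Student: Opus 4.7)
The plan is to apply Lemma \ref{lem:4} to the piecewise $C^1$ simple closed curve $\mathcal{C} := \mathcal{C}(A_1,A_2,A_3,A_1)$ on $\partial K$, and then to handle general $K \in \mathcal{K}^3$ by an approximation argument. The key elementary ingredient is the additivity
\begin{equation*}
\overline{\mathcal{C}} = \overline{\mathcal{C}(A_1,A_2)} + \overline{\mathcal{C}(A_2,A_3)} + \overline{\mathcal{C}(A_3,A_1)},
\end{equation*}
which follows from the line-integral definition of $\overline{\cdot}$ when the arcs are smooth, and from the intrinsic formula \eqref{eq:d} in the non-smooth case. Thus, for $K \in \checkK^3$, Lemma \ref{lem:4} gives the claimed inequality directly.

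To remove the regularity assumption, I would invoke Proposition \ref{prop:sch} with $G = \{E\}$ to produce $K_n \in \checkK^3$ converging to $K$ in the Hausdorff metric. Setting $A_i^{(n)} := \rho_{K_n}(\va_i)\,\va_i/|\va_i|$ preserves the radial directions, so $\pos(A_i^{(n)}, A_{i+1}^{(n)}) = \pos(A_i, A_{i+1})$ and formula \eqref{eq:d} yields
\begin{equation*}
\overline{\mathcal{C}_{K_n}(A_i^{(n)}, A_{i+1}^{(n)})} = |K_n \cap \pos(A_i, A_{i+1})|\, \vn_i \longrightarrow |K \cap \pos(A_i, A_{i+1})|\, \vn_i = \overline{\mathcal{C}(A_i, A_{i+1})}
\end{equation*}
by Hausdorff-continuity of the volume functional on convex sets, with the common unit normal $\vn_i = (\va_i \times \va_{i+1})/|\va_i \times \va_{i+1}|$. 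Analogously, $|o*\mathcal{S}_{K_n}(A_1^{(n)},A_2^{(n)},A_3^{(n)})| = |K_n \cap \pos(A_1,A_2,A_3)| \to |o*\mathcal{S}_K(A_1,A_2,A_3)|$. Combined with the inequality for each $K_n$ and a limit argument on $\vx$ (first for $\vx \in \interior K$, which belongs to $K_n$ eventually, and then by density for $\vx \in \partial K$), this transfers the inequality to general $K$.

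For the equality case, the underlying exact identity, obtained by applying the divergence theorem to the vector field $\vy \mapsto (\vy - \vx)/3$ on the solid $o*\mathcal{S}_K$ (the contribution of each flat face $o*\mathcal{C}(A_i,A_{i+1})$ simplifies since it lies in a plane through $o$), reads
\begin{equation*}
|o*\mathcal{S}_K(A_1,A_2,A_3)| - \frac{1}{3}\vx \cdot \overline{\mathcal{C}} = \frac{1}{3}\int_{\mathcal{S}_K} (\vy - \vx) \cdot \vn^{\mathrm{out}}(\vy)\, dS(\vy),
\end{equation*}
with $\vn^{\mathrm{out}}$ the outer unit normal to $\partial K$. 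For $\vx \in K$ the integrand is non-negative by convexity, so equality amounts to $(\vy - \vx)\cdot \vn^{\mathrm{out}}(\vy) = 0$ almost everywhere on $\mathcal{S}_K$. Geometrically this forces every segment $[\vx, \vy]$, with $\vy \in \mathcal{S}_K$, to lie in a supporting hyperplane of $K$ at $\vy$, hence on $\partial K$. Consequently $\vx \in \mathcal{S}_K$ and $\mathcal{S}_K = \vx * \mathcal{C}$, which upon setting $\vx_0 := \vx$ is equivalent to $o*\mathcal{S}_K(A_1,A_2,A_3) = \vx_0 * (o*\mathcal{C}(A_1,A_2,A_3,A_1))$. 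The converse implication is immediate: if such a $\vx_0 \in \mathcal{S}_K$ exists, substituting $\vx = \vx_0$ makes both sides equal to the volume of the (now genuine) cone $\vx_0 * (o*\mathcal{C})$.

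The hard part will be the last paragraph: the divergence-theorem identity is naturally available only for $K \in \checkK^3$, so the equality characterization for general $K \in \mathcal{K}^3$ must be recovered via the same approximation, carefully tracking the candidate apexes $\vx_0^{(n)} \in \mathcal{S}_{K_n}$ and verifying both that the limit $\vx_0 = \lim \vx_0^{(n)}$ still lies in $\mathcal{S}_K$ (not escaping onto the bounding curve $\mathcal{C}$) and that the ruled-cone structure $\mathcal{S}_K = \vx_0 * \mathcal{C}$ survives the passage to the limit.
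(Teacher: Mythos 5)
Your treatment of the inequality is sound: the additivity of $\overline{\mathcal{C}}$ over the three arcs, Lemma \ref{lem:4} for $K\in\checkK^3$, and the Hausdorff-continuity of $|K_n\cap\pos(A_i,A_{i+1})|$ and $|K_n\cap\pos(A_1,A_2,A_3)|$ under radial normalization of the approximants all work, and this reproves by approximation what the paper simply imports from \cite{IS}*{Lemma 6.2}. The problem is the equality characterization. The approximation scheme you defer to in your last paragraph cannot be repaired as described: the bodies $K_n\in\checkK^3$ are strongly convex with $C^2$ boundary, so no piece of $\partial K_n$ is ever a cone and equality \emph{never} holds for any $K_n$; there are no ``candidate apexes $\vx_0^{(n)}\in\mathcal{S}_{K_n}$'' to track, and equality for the limit body $K$ induces no equality-like structure on the approximants without a quantitative stability estimate that you do not supply. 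Since the equality case is only ever relevant for non-smooth bodies (polytopes), running the analysis on smooth approximants and trying to pass to the limit is exactly backwards.

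Two ways out. First, your divergence-theorem identity is in fact valid for an arbitrary convex body (a Lipschitz domain with outer normal defined $\mathcal{H}^2$-a.e.), so the restriction to $\checkK^3$ is self-imposed; if you argue directly on $K$ you avoid the approximation entirely, though you then still owe a careful argument that ``$(\vy-\vx)\cdot\vn^{\mathrm{out}}(\vy)=0$ a.e.\ on $\mathcal{S}_K$'' really yields $\mathcal{S}_K=\vx*\mathcal{C}$ (that every point of $\mathcal{S}_K$ lies on a segment from $\vx$ to $\mathcal{C}$ contained in $\partial K$). Second, and this is what the paper does, no differential identity is needed at all: since each $\overline{\mathcal{C}(A_i,A_{i+1})}$ is defined for general $K$ by formula \eqref{eq:d} as $|L_i|\,\vn_i$ with $L_i=K\cap\pos(A_i,A_{i+1})$ planar, the left-hand side of the inequality is, for $\vx\in o*\mathcal{S}_K(A_1,A_2,A_3)$, exactly the volume of the cone $\vx*(o*\mathcal{C}(A_1,A_2,A_3,A_1))$; this cone is contained in the convex set $o*\mathcal{S}_K(A_1,A_2,A_3)$, so equality of volumes forces the two closed solids to coincide, whence $\vx\in\mathcal{S}_K(A_1,A_2,A_3)$ and $\vx_0=\vx$. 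That containment argument is smoothness-free and replaces your entire last paragraph.
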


\begin{proof}
For the inequality part, see \cite{IS}*{Lemma 6.2}.

Assume that the equality holds.
By the construction, if $\vx \notin o*\mathcal{S}_K(A_1, A_2, A_3)$, then the equality does not hold.
Hence, $\vx \in o*\mathcal{S}_K(A_1, A_2, A_3)$.
Then the left-hand side is nothing but the volume of the cone over $o*\mathcal{C}(A_1, A_2, A_3, A_1)$ with the vertex $\vx$.
Since the cone is contained in $o*\mathcal{S}_K(A_1, A_2, A_3)$, the assumption implies that the two solids coincide and $\vx \in \mathcal{S}_K(A_1, A_2, A_3)$.
The converse is obvious.
\end{proof}

\section{Proof of Theorem \ref{thm:2}: inequality}
\label{sec:4}

In this section, we prove the inequalities in Theorem \ref{thm:2} by case analysis.
We start with the case $G=T$.

\subsection{The case $G=T$}
\label{sec:4.1}

\begin{proposition}
\label{prop:5}
The inequality $\mathcal{P}(K) \geq \mathcal{P}(\triangle)$ holds for any $T$-invariant convex body $K \in \K^3(T)$.
\end{proposition}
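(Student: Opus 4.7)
The plan is to apply the signed volume estimate (Lemma~\ref{lem:7}) to a closed curve on $\partial K$ bounding a fundamental domain for the $T$-action, and to reduce the resulting lower bound to the two-dimensional estimates of Propositions~\ref{prop:1} and~\ref{prop:2}. First, by Proposition~\ref{prop:sch} I may assume $K\in\checkK^3(T)$, so that the $C^1$-diffeomorphism $\Lambda_K\colon\partial K\to\partial K^\circ$ is available. Let $V_1,\ldots,V_4$ be the unit vertices of $\triangle$. The three-fold rotations in $T$ about the axes through $\pm V_i$ fix the radial hits
\[
A_i:=\rho_K(V_i)V_i,\qquad B_i:=\rho_K(-V_i)(-V_i),\qquad i=1,\ldots,4,
\]
and by the $T$-equivariance of $\Lambda_K$ (Lemma~\ref{lem:6}) their images satisfy $\Lambda_K(A_i)\parallel V_i$ and $\Lambda_K(B_i)\parallel -V_i$. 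Transitivity of $T$ on $\{V_i\}$ and on $\{-V_i\}$ forces common values $a:=|A_i|$ and $b:=|B_i|$, with $|\Lambda_K(A_i)|=1/a$ and $|\Lambda_K(B_i)|=1/b$.

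A convenient fundamental domain of $T$ on $S^2$ is the spherical triangle with corners $\hat V_1,\hat V_2,-\hat V_4$, whose three edges lie on the planes through the tetrahedral edges $V_1V_2$, $V_2V_4$, $V_1V_4$ (these being reflection planes of the enclosing group $T_d$). The corresponding region on $\partial K$ is enclosed by the simple closed curve $\mathcal C=\mathcal C_{12}\cup\mathcal C_{24}\cup\mathcal C_{41}$, where $\mathcal C_{12}:=\mathcal C_K(A_1,A_2)$, $\mathcal C_{24}:=\mathcal C_K(A_2,B_4)$, $\mathcal C_{41}:=\mathcal C_K(B_4,A_1)$ are the three planar arcs in those planes. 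Since $\Lambda_K$ is $T$-equivariant, $\mathcal S_K(\mathcal C)$ and its $\Lambda$-image are fundamental domains on $\partial K$ and $\partial K^\circ$ respectively, so $|o*\mathcal S_K(\mathcal C)|=|K|/12$ and $|o*\mathcal S_{K^\circ}(\Lambda(\mathcal C))|=|K^\circ|/12$. Lemma~\ref{lem:7} then yields
\[
|K|\,|K^\circ|\;\ge\;16\,\overline{\mathcal C}\cdot\overline{\Lambda(\mathcal C)},
\]
so, since $\mathcal P(\triangle)=4^{4}/(3!)^{2}=64/9$, the proof reduces to the inequality $\overline{\mathcal C}\cdot\overline{\Lambda(\mathcal C)}\ge 4/9$.

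For the last inequality I would expand $\overline{\mathcal C}=\overline{\mathcal C_{12}}+\overline{\mathcal C_{24}}+\overline{\mathcal C_{41}}$ and decompose $\overline{\Lambda(\mathcal C)}$ analogously. The three diagonal products coincide (by the computation in the proof of Proposition~\ref{prop:1}) with the two-dimensional products $|L_{ij}|\,|L_{ij}^\circ|$ of the sliced cones in the respective planes, and are bounded from below by Proposition~\ref{prop:2} for the symmetric arc $\mathcal C_{12}$ (whose endpoints are both at radial distance $a$ with parallel polars) and by Proposition~\ref{prop:1} for the mixed arcs $\mathcal C_{24}$ and $\mathcal C_{41}$ (joining a vertex-direction point with the face-direction point $B_4$). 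The principal obstacle is the six cross terms $\overline{\mathcal C_{ij}}\cdot\overline{\Lambda(\mathcal C_{k\ell})}$: each $\overline{\mathcal C_{ij}}$ is normal to $\mathrm{span}(V_i,V_j)$, but $\Lambda(\mathcal C_{k\ell})$ need not be planar, so there is no immediate 2D projection argument. My plan is to exploit the 3-fold rotational symmetry about the $V_1$- and $-V_4$-axes by summing the contributions of the three fundamental domains meeting at $A_1$ (respectively at $B_4$); the off-diagonal contributions collapse under this averaging to vectors parallel to $V_1$ (respectively $-V_4$), reducing the estimate to two one-parameter problems in the ratio $a/b$ analogous to the situation of Lemma~\ref{lem:3}, with minimum $4/9$ realized precisely at $b/a=1/3$ (giving $K=\triangle$) or $a/b=1/3$ (giving $K=\triangle^\circ$).
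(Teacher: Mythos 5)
Your setup is sound up to the reduction: passing to $\checkK^3(T)$ via Proposition~\ref{prop:sch}, identifying $\Lambda_K(A_i)\parallel V_i$ from the $3$-fold stabilizers, and the bookkeeping $|K|\,|K^\circ|\ge 16\,\overline{\mathcal C}\cdot\overline{\Lambda(\mathcal C)}$ for a true fundamental domain are all correct, as is the target value $4/9$. But the proof of the inequality $\overline{\mathcal C}\cdot\overline{\Lambda(\mathcal C)}\ge 4/9$ is missing, and you have correctly located why: the six cross terms $\overline{\mathcal C_{ij}}\cdot\overline{\Lambda(\mathcal C_{k\ell})}$ have no sign and no two-dimensional interpretation, since $\Lambda(\mathcal C_{k\ell})$ is not planar. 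The averaging you propose does not repair this. Summing the contributions of the three fundamental domains meeting at $B_4$ only multiplies both sides of the same (unproved) inequality by $3$, because the pairing $\overline{\mathcal C}\cdot\overline{\Lambda(\mathcal C)}$ is bilinear, not linear: for rotated copies $u_k=R^ku_0$, $v_k=R^kv_0$ one has $\sum_k u_k\cdot v_k=3\,u_0\cdot v_0$, which is not the cross-term-free quantity $\bigl(\sum_k u_k\bigr)\cdot\bigl(\sum_k v_k\bigr)$. The claimed reduction to ``two one-parameter problems in $a/b$ with minimum $4/9$'' is asserted, not derived, so the argument does not close.

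The fix --- and what the paper actually does --- is to apply Lemma~\ref{lem:7} once to the \emph{union} of the three domains meeting at $B_4$, i.e.\ to $\tilde K:=o*\mathcal S_K(A_1,A_2,A_3)$ with $|K|=4|\tilde K|$. The interior arcs through $B_4$ then disappear from the boundary curve, which becomes $\mathcal C(A_1,A_2,A_3,A_1)$ with $\mathcal C(A_2,A_3)=R_{D}^2(\mathcal C(A_1,A_2))$ etc.; hence $\overline{\mathcal C}=(E+R_D+R_D^2)\overline{\mathcal C(A_1,A_2)}$ and likewise for $\overline{\Lambda(\mathcal C)}$ by Lemma~\ref{lem:6}. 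Both vectors are then parallel to the $V_4$-axis (for $\overline{\Lambda(\mathcal C(A_1,A_2))}$ one first kills one coordinate using the order-two rotation reversing $\mathcal C(A_1,A_2)$), so the product of the sums collapses to $6\,\overline{\mathcal C(A_1,A_2)}\cdot\overline{\Lambda(\mathcal C(A_1,A_2))}$ with \emph{no} residual cross terms, and a single application of Proposition~\ref{prop:1} gives $\overline{\mathcal C(A_1,A_2)}\cdot\overline{\Lambda(\mathcal C(A_1,A_2))}\ge\tfrac14(\va_1-\va_2)\cdot(\va_1-\va_2)=2/3$, whence $|K|\,|K^\circ|\ge\tfrac{16}{9}\cdot 6\cdot\tfrac23=\tfrac{64}{9}$. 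Note this route never needs the face-direction points $B_i$ or the second parameter $b$ at all.
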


\begin{proof}
We put
\begin{equation*}
 \va=\frac{1}{\sqrt{3}}
\begin{pmatrix}
1 \\ 1 \\ 1
\end{pmatrix}, \quad
 \vb=\frac{1}{\sqrt{3}}
\begin{pmatrix}
1 \\ -1 \\ -1
\end{pmatrix}, \quad
 \vc=\frac{1}{\sqrt{3}}
\begin{pmatrix}
-1 \\ 1 \\ -1
\end{pmatrix}, \quad
 \vd=\frac{1}{\sqrt{3}}
\begin{pmatrix}
-1 \\ -1 \\ 1
\end{pmatrix}.
\end{equation*}
\begin{center}
\includegraphics[height=10em]{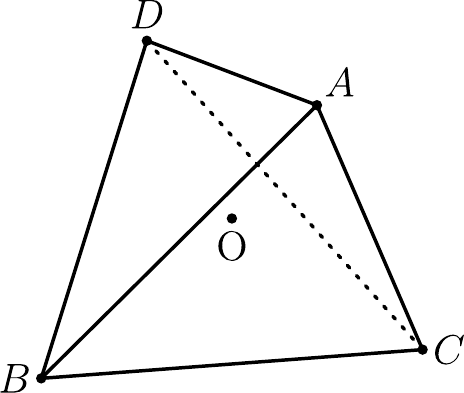}
\end{center}
Then the regular tetrahedron $\triangle$ is represented as $\triangle=\conv\{ A,B,C,D \}$, and we can easily compute that $\mathcal{P}(\triangle)=64/9$.
We denote by $R_A$, $R_B$, $R_C$, and $R_D$ the rotations through the angle $2\pi/3$ about the axes $oA$, $oB$, $oC$, and $oD$, respectively.
That is,
\begin{equation*}
R_A=
\begin{pmatrix}
 0 & 0 & 1 \\
 1 & 0 & 0 \\
 0 & 1 & 0 \\
\end{pmatrix}, \ 
R_B=
\begin{pmatrix}
 0 & 0 & -1 \\
 -1 & 0 & 0 \\
 0 & 1 & 0 \\
\end{pmatrix}, \ 
R_C=
\begin{pmatrix}
 0 & 0 & 1 \\
 -1 & 0 & 0 \\
 0 & -1 & 0 \\
\end{pmatrix}, \ 
R_D=
\begin{pmatrix}
 0 & 0 & -1 \\
 1 & 0 & 0 \\
 0 & -1 & 0 \\
\end{pmatrix}.
\end{equation*}
Under this setting, we see that these four elements generate the group $T$:
\begin{equation*}
 T= \braket{R_A, R_B, R_C, R_D} \subset SO(3).
\end{equation*}
Let $K \in \K^3(T)$.
Since the volume product $\mathcal{P}$ is continuous with respect to the Hausdorff distance on $\K^3$, by Proposition \ref{prop:sch}, it suffices to consider the case that $K \in \checkK^3(T)$.
By a dilation of $K$, 
we may assume that $A \in \partial K$.
Then, by the $T$-symmetry, $B,C,D \in \partial K$ also hold.
Let us consider fundamental domains
$\tilde{K}:=o*\mathcal{S}_{K}(A,B,C)$ and 
$\tilde{K}^\circ:=o*\Lambda(\mathcal{S}_{K}(A,B,C))$
of $K$ and $K^\circ$, respectively.
Then, we have
\begin{equation*}
 |K|=4 |\tilde{K}|, \quad
 |K^\circ|=4 |\tilde{K}^\circ|.
\end{equation*}
By applying Lemma \ref{lem:7} to the curve $\mathcal{C}=\mathcal{C}_K(A,B,C,A)$, we obtain
\begin{equation}
\label{eq:3}
\begin{aligned}
\frac{9}{16}
|K| \, |K^\circ| &\geq 9|\tilde{K}| \, |\tilde{K}^\circ| \\
=&
(\overline{\mathcal{C}(A,B)}+\overline{\mathcal{C}(B,C)}+\overline{\mathcal{C}(C,A)})
\cdot
(\overline{\Lambda(\mathcal{C}(A,B))}+\overline{\Lambda(\mathcal{C}(B,C))}+\overline{\Lambda(\mathcal{C}(C,A))}).
\end{aligned}
\end{equation}

Next, let us compute the right-hand side of \eqref{eq:3}.
Since 
\begin{equation}
\label{eq:e}
\mathcal{C}(B,C)=R_D^2(\mathcal{C}(A,B)), \quad
\mathcal{C}(C,A)=R_D(\mathcal{C}(A,B)),
\end{equation}
we have 
\begin{equation*}
\overline{\mathcal{C}(A,B)}+\overline{\mathcal{C}(B,C)}+\overline{\mathcal{C}(C,A)}
=(E+R_D+R_D^2) \overline{\mathcal{C}(A,B)}.
\end{equation*}
Note that $\overline{\mathcal{C}(A,B)} = |\overline{\mathcal{C}(A,B)}| \,{}^t(0, 1/\sqrt{2}, -1/\sqrt{2})$ from \eqref{eq:d}. Hence,
\begin{equation*}
(E+R_D+R_D^2) \overline{\mathcal{C}(A,B)}
=
\frac{|\overline{\mathcal{C}(A,B)}|}{\sqrt{2}}
\begin{pmatrix}
 1 &  1 & -1 \\
 1 &  1 & -1 \\
-1 & -1 &  1
\end{pmatrix}
\begin{pmatrix}
0 \\ 1 \\ -1
\end{pmatrix}
=
\sqrt{2} |\overline{\mathcal{C}(A,B)}|
\begin{pmatrix}
1 \\ 1 \\ -1
\end{pmatrix}.
\end{equation*}
On the other hand, by Lemma \ref{lem:6}, we have
$\Lambda(\mathcal{C}(B,C))=R_D^2(\Lambda(\mathcal{C}(A,B)))$ and
$\Lambda(\mathcal{C}(C,A))=R_D(\Lambda(\mathcal{C}(A,B)))$
from \eqref{eq:e}, so that
\begin{equation*}
\overline{\Lambda(\mathcal{C}(A,B))}+\overline{\Lambda(\mathcal{C}(B,C))}+\overline{\Lambda(\mathcal{C}(C,A))}
=(E+R_D+R_D^2) \overline{\Lambda(\mathcal{C}(A,B))}
\end{equation*}
holds.
Putting ${}^t(x_1, x_2, x_3):=\overline{\Lambda(\mathcal{C}(A,B))}$, we get
\begin{equation*}
\overline{\mathcal{C}(A,B)} \cdot \overline{\Lambda(\mathcal{C}(A,B))}
=
\frac{1}{\sqrt{2}} (x_2-x_3)|\overline{\mathcal{C}(A,B)}|.
\end{equation*}
Since
\begin{equation*}
R_B^2 R_C = 
\begin{pmatrix}
 1 & 0 & 0 \\
 0 & -1 & 0 \\
 0 & 0 & -1 \\
\end{pmatrix}, \quad R_B^2 R_C(\mathcal{C}(A,B)) = \mathcal{C}(B,A),
\end{equation*}
we obtain
\begin{equation*}
-
\begin{pmatrix}
x_1 \\ x_2 \\ x_3
\end{pmatrix}
=
\overline{\Lambda(\mathcal{C}(B,A))}
=
\overline{R_B^2 R_C(\Lambda(\mathcal{C}(A,B)))}
= 
R_B^2 R_C \overline{\Lambda(\mathcal{C}(A,B))}
=
\begin{pmatrix}
x_1 \\ -x_2 \\ -x_3 
\end{pmatrix},
\end{equation*}
which implies that $x_1=0$. Therefore, 
\begin{equation*}
(E+R_D+R_D^2) \overline{\Lambda(\mathcal{C}(A,B))}
=
\begin{pmatrix}
 1 &  1 & -1 \\
 1 &  1 & -1 \\
-1 & -1 &  1
\end{pmatrix}
\begin{pmatrix}
0 \\ x_2 \\ x_3
\end{pmatrix}
=(x_2-x_3)
\begin{pmatrix}
1 \\ 1 \\ -1
\end{pmatrix}
\end{equation*}
holds.
Thus, the right-hand side of \eqref{eq:3} is computed as
\begin{equation*}
\sqrt{2} |\overline{\mathcal{C}(A,B)}|
\begin{pmatrix}
1 \\ 1 \\ -1
\end{pmatrix}
\cdot
(x_2-x_3)
\begin{pmatrix}
1 \\ 1 \\ -1
\end{pmatrix}
=
3\sqrt{2} (x_2-x_3)|\overline{\mathcal{C}(A,B)}|
=
6 \,\overline{\mathcal{C}(A,B)} \cdot \overline{\Lambda(\mathcal{C}(A,B))}.
\end{equation*}

Finally, we estimate the right-hand side of the above equality from below.
Putting ${}^t(y_1, y_2, y_3):=\Lambda(\va)$, we have
\begin{equation*}
\va \cdot \Lambda(\va) = \frac{1}{\sqrt{3}} (y_1+y_2+y_3)=1, \quad
\begin{pmatrix}
y_1 \\ y_2 \\ y_3
\end{pmatrix}
=
\Lambda(\va) = \Lambda(R_A \va)=R_A \Lambda(\va)
=
\begin{pmatrix}
y_3 \\ y_1 \\ y_2
\end{pmatrix},
\end{equation*}
which implies that $\Lambda(\va)=\va$.
Similarly, $\Lambda(\vb)=\vb$ holds. 
By Proposition \ref{prop:1}, we get
\begin{equation*}
\overline{\mathcal{C}(A,B)} \cdot \overline{\Lambda(\mathcal{C}(A,B))}
\geq 
\frac{1}{4}
(\va-\vb)\cdot
(\Lambda(\va)-\Lambda(\vb))
=\frac{2}{3}.
\end{equation*}
Consequently, we obtain
\begin{equation*}
|K| \, |K^\circ| 
\geq 
\frac{16}{9} \,6 \,\overline{\mathcal{C}(A,B)} \cdot \overline{\Lambda(\mathcal{C}(A,B))}
\geq \frac{64}{9},
\end{equation*}
which completes the proof.
\end{proof}

\subsection{The case $G=O$}
\label{sec:4.2}

\begin{proposition}
\label{prop:6}
The inequality $\mathcal{P}(K) \geq \mathcal{P}(\Diamond)$ holds for any $O$-invariant convex body $K \in \K^3(O)$.
\end{proposition}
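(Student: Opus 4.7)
The plan is to mirror the proof of Proposition \ref{prop:5} for $G=T$ with the tetrahedral face replaced by a triangular face of the octahedron. By continuity of $\mathcal{P}$ on $\mathcal{K}^3$ together with Proposition \ref{prop:sch}, I may reduce to $K \in \checkK^3(O)$. After a suitable dilation of $K$, the points $A, B, C \in \partial K$ have position vectors $\va = \ve_1$, $\vb = \ve_2$, $\vc = \ve_3$ (by $O$-invariance, placing one such point on $\partial K$ places all three). The three-fold rotation $R \in O$ about the axis $(1,1,1)/\sqrt{3}$ cyclically permutes $A, B, C$, and the stabilizer of $A$ in $O$, of order four, is generated by the $\pi/2$-rotation $R_x$ about the $x$-axis. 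Because $O$ acts transitively on the $8$ octahedral faces, the truncated cone $\tilde K := o * \mathcal{S}_K(A,B,C)$ satisfies $|K| = 8|\tilde K|$ and $|K^\circ| = 8|\tilde K^\circ|$.

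Applying Lemma \ref{lem:7} to the piecewise $C^1$ simple closed curve $\mathcal{C} := \mathcal{C}_K(A,B,C,A)$ on $\partial K$ gives $9|\tilde K|\,|\tilde K^\circ| \geq \overline{\mathcal{C}} \cdot \overline{\Lambda(\mathcal{C})}$. Using $R$-equivariance of $\Lambda$ (Lemma \ref{lem:6}), both $\overline{\mathcal{C}}$ and $\overline{\Lambda(\mathcal{C})}$ become $(E + R + R^2)$ applied to $\overline{\mathcal{C}(A,B)}$ and $\overline{\Lambda(\mathcal{C}(A,B))}$ respectively. Since $\mathcal{C}(A,B)$ lies in the $xy$-plane, \eqref{eq:d} gives $\overline{\mathcal{C}(A,B)} = \lambda \ve_3$ with $\lambda := |K \cap \pos(A,B)| > 0$, and a direct matrix computation yields $\overline{\mathcal{C}} = \lambda(\ve_1 + \ve_2 + \ve_3)$. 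To restrict $\overline{\Lambda(\mathcal{C}(A,B))}$, I next use the $\pi$-rotation $R_{AB}$ about $(1,1,0)/\sqrt{2}$, which lies in $O$, swaps $A$ and $B$, and reverses $\mathcal{C}(A,B)$. Combining Lemma \ref{lem:6} with $\overline{-\mathcal{C}(A,B)} = -\overline{\mathcal{C}(A,B)}$ forces $\overline{\Lambda(\mathcal{C}(A,B))} = x_1 \ve_1 - x_1 \ve_2 + x_3 \ve_3$ for some scalars $x_1, x_3$, and another short computation then gives $\overline{\Lambda(\mathcal{C})} = x_3 (\ve_1 + \ve_2 + \ve_3)$. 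Consequently,
\begin{equation*}
\overline{\mathcal{C}} \cdot \overline{\Lambda(\mathcal{C})} = 3 \lambda x_3 = 3\,\overline{\mathcal{C}(A,B)} \cdot \overline{\Lambda(\mathcal{C}(A,B))}.
\end{equation*}

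Finally, the $\pi/2$-rotation $R_x$ stabilizes $A$; Lemma \ref{lem:6} and the normalization $\va \cdot \Lambda(\va) = 1$ then force $\Lambda(\va) = \va$, and similarly $\Lambda(\vb) = \vb$. With $|\va| = |\vb| = 1$ and angle $\pi/2$ between them, Proposition \ref{prop:1} yields $\overline{\mathcal{C}(A,B)} \cdot \overline{\Lambda(\mathcal{C}(A,B))} \geq (1 - \cos(\pi/2))/2 = 1/2$. Putting everything together,
\begin{equation*}
|K|\,|K^\circ| = 64\,|\tilde K|\,|\tilde K^\circ| \geq \frac{64}{9} \cdot \frac{3}{2} = \frac{32}{3} = \mathcal{P}(\Diamond).
\end{equation*}
The principal subtlety is the choice of the second symmetry: the rotation $R$ alone leaves a $2$-parameter family of possibilities for $\overline{\Lambda(\mathcal{C}(A,B))}$, and one must invoke the two-fold rotation $R_{AB}$ to reduce this family enough that $(E+R+R^2)\,\overline{\Lambda(\mathcal{C}(A,B))}$ becomes a scalar multiple of $\ve_1 + \ve_2 + \ve_3$ and thus parallel to $\overline{\mathcal{C}}$, producing a nontrivial inner product.
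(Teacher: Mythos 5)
Your proposal is correct and follows essentially the same route as the paper: the same fundamental domain $o*\mathcal{S}_K(A,B,C)$ with the factor $64$, the same application of Lemma \ref{lem:7}, the same reduction of $\overline{\Lambda(\mathcal{C}(A,B))}$ via the two-fold symmetry swapping $A$ and $B$ (your $R_{AB}$ is exactly the paper's product $RR_A$), and the same endpoint normalization $\Lambda(\va)=\va$, $\Lambda(\vb)=\vb$ feeding into Proposition \ref{prop:1}. The only cosmetic difference is that you invoke the second inequality of Proposition \ref{prop:1} (the $(1-\cos\xi)/2$ form) where the paper uses the first, which yields the same bound $1/2$.
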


\begin{proof}
Let $\Diamond$ be the regular octahedron with vertices
$\{\pm \va, \pm \vb, \pm \vc\}$, where
\begin{equation*}
\va:=
\begin{pmatrix}
1 \\ 0 \\ 0
\end{pmatrix}, \quad
\vb:=
\begin{pmatrix}
0 \\ 1 \\ 0
\end{pmatrix}, \quad
\vc:=
\begin{pmatrix}
0 \\ 0 \\ 1
\end{pmatrix}.
\end{equation*}
Then, all the elements of $O_h$ are described as
\begin{equation*}
\begin{aligned}
\begin{pmatrix}
\pm 1 & 0 & 0 \\
0 & \pm 1 & 0 \\
0 & 0 & \pm 1 \\
\end{pmatrix}, \quad
\begin{pmatrix}
\pm 1 & 0 & 0 \\
0 & 0 & \pm 1 \\
0 & \pm 1 & 0 \\
\end{pmatrix}, \quad
\begin{pmatrix}
0 & \pm 1 & 0 \\
\pm 1 & 0 & 0 \\
0 & 0 & \pm 1 \\
\end{pmatrix}, \\
\begin{pmatrix}
0 & \pm 1 & 0 \\
0 & 0 & \pm 1 \\
\pm 1 & 0 & 0 \\
\end{pmatrix}, \quad 
\begin{pmatrix}
0 & 0 & \pm 1 \\
\pm 1 & 0 & 0 \\
0 & \pm 1 & 0 \\
\end{pmatrix},\quad
\begin{pmatrix}
0 & 0 & \pm 1 \\
0 & \pm 1 & 0 \\
\pm 1 & 0 & 0 \\
\end{pmatrix}.
\end{aligned}
\end{equation*}
In particular, $O=\{g \in O_h; \det g=1\}$.
And we see that $\mathcal{P}(\Diamond)=32/3$.
Let $K \in \K^3(O)$.
In order to examine the volume product of $K$, by Proposition \ref{prop:sch}, we may assume that $K \in \checkK^3(O)$.
Putting $\tilde{K}:=o*\mathcal{S}_{K}(A,B,C)$ and $\tilde{K}^\circ:=o*\Lambda(\mathcal{S}_{K}(A,B,C))$, by the $O$-symmetry, we have
\begin{equation*}
 |K| \, |K^\circ| 
= 64 |\tilde{K}| \, |\tilde{K}^\circ|.
\end{equation*}
By Lemma \ref{lem:7}, we obtain
\begin{equation}
\label{eq:10}
9 |\tilde{K}| \, |\tilde{K}^\circ|
\geq 
 \left(
 \overline{\mathcal{C}(A,B)}
+\overline{\mathcal{C}(B,C)}
+\overline{\mathcal{C}(C,A)}
\right) \cdot
\left(
 \overline{\Lambda(\mathcal{C}(A,B))}
+\overline{\Lambda(\mathcal{C}(B,C))}
+\overline{\Lambda(\mathcal{C}(C,A))}
\right).
\end{equation}

We put
\begin{equation*}
R:=
\begin{pmatrix}
0 & 0 & 1 \\ 
1 & 0 & 0 \\ 
0 & 1 & 0 \\ 
\end{pmatrix}, \
R_A:=
\begin{pmatrix}
1 & 0 & 0 \\ 
0 & 0 & -1 \\ 
0 & 1 & 0 \\ 
\end{pmatrix}, \
R_B:=
\begin{pmatrix}
0 & 0 & 1 \\ 
0 & 1 & 0 \\ 
-1 & 0 & 0 \\ 
\end{pmatrix}
\in O,
\end{equation*}
where $R$ is the rotation through the angle $2\pi/3$ about the axis passing through the origin $o$ and the point $(1,1,1)$, and $R_A$ and $R_B$ are rotations through the angle $\pi/2$ about the axes $oA$ and $oB$, respectively.
Since
\begin{equation*}
\mathcal{C}(B,C)=R(\mathcal{C}(A,B)), \quad 
\mathcal{C}(C,A)=R^2(\mathcal{C}(A,B)),
\end{equation*}
the right-hand side of \eqref{eq:10} becomes
\begin{equation*}
(E+R+R^2)
 \overline{\mathcal{C}(A,B)}
\cdot
(E+R+R^2)
 \overline{\Lambda(\mathcal{C}(A,B))}.
\end{equation*}
Note that $\overline{\mathcal{C}(A,B)}=|\overline{\mathcal{C}(A,B)}| \,{}^t(0,0,1)$.
Here we put ${}^t(x_1,x_2,x_3):=\overline{\Lambda(\mathcal{C}(A,B))}$.
Then, the right-hand side of \eqref{eq:10} equals
\begin{equation*}
|\overline{\mathcal{C}(A,B)}|
\begin{pmatrix}
1 \\ 1 \\ 1
\end{pmatrix}
\cdot
\begin{pmatrix}
 x_1 + x_2 + x_3 \\
 x_1 + x_2 + x_3 \\
 x_1 + x_2 + x_3
\end{pmatrix}
=
3 |\overline{\mathcal{C}(A,B)}| (x_1+x_2+x_3).
\end{equation*}
On the other hand, since $R R_A(\mathcal{C}(A,B)) = R(\mathcal{C}(A,C)) = \mathcal{C}(B,A)$,
we obtain
\begin{equation*}
\begin{pmatrix}
x_2 \\ x_1 \\ -x_3
\end{pmatrix}
=
R R_A \overline{\Lambda(\mathcal{C}(A,B))} = -\overline{\Lambda(\mathcal{C}(A,B))}
=
-
\begin{pmatrix}
x_1 \\ x_2 \\ x_3
\end{pmatrix},
\end{equation*}
hence $x_2=-x_1$. Thus, the right-hand side of \eqref{eq:10} equals
\begin{equation*}
3 |\overline{\mathcal{C}(A,B)}| (x_1+x_2+x_3)
=
3 |\overline{\mathcal{C}(A,B)}| x_3
=
3 \overline{\mathcal{C}(A,B)} \cdot \overline{\Lambda(\mathcal{C}(A,B))}.
\end{equation*}

Since $R_A \va=\va$ and $R_B \vb=\vb$ hold, by a similar argument of the case $G=T$,
we obtain $\Lambda(\va)=\va$ and $\Lambda(\vb)=\vb$.
Therefore, Proposition \ref{prop:1} yields that
\begin{equation*}
\overline{\mathcal{C}(A,B)} \cdot \overline{\Lambda(\mathcal{C}(A,B))} \geq \frac{1}{4} (\va-\vb) \cdot (\Lambda(\va)-\Lambda(\vb)) = \frac{1}{2},
\end{equation*}
so that we obtain $|K| \, |K^\circ| \geq 32/3$, as claimed.
\end{proof}

\begin{remark}
In this case, the minimum of $\mathcal{P}$ is the same as the centrally symmetric case, that is, $S_2=\braket{R_{2}H}=\braket{-E} \cong \mathbb{Z}_2$.
However, the generator $-E$ of $S_2$ is not an element of $O$.
This means that Proposition \ref{prop:6} and \cite{IS}*{Theorem 1} are independent results.
\end{remark}

\subsection{The case $G=I$}

\begin{proposition}
\label{prop:7}
The inequality $\mathcal{P}(K) \geq \mathcal{P}(\Pentagon)$ holds for any $I$-invariant convex body $K \in \K^3(I)$.
\end{proposition}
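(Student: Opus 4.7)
The plan is to follow the template used for $G = T$ and $G = O$. By Proposition \ref{prop:sch} it suffices to treat $K \in \checkK^3(I)$, and after a dilation I may assume $A \in \partial K$ for one vertex $A$ of $\Pentagon$; by $I$-invariance all $12$ vertices then lie on $\partial K$. Fix a face $ABC$ of $\Pentagon$, so that $|\va| = |\vb| = |\vc| = 1$. Since $I$ acts transitively on the $20$ faces of $\Pentagon$ with face-stabilizer of order $3$, the fundamental domains $\tilde K := o * \mathcal{S}_K(A, B, C)$ and $\tilde K^\circ := o * \Lambda(\mathcal{S}_K(A, B, C))$ satisfy $|K|\,|K^\circ| = 400\,|\tilde K|\,|\tilde K^\circ|$. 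Applying Lemma \ref{lem:7} to $\mathcal{C} := \mathcal{C}_K(A, B, C, A)$ reduces the problem to bounding $\overline{\mathcal{C}} \cdot \overline{\Lambda(\mathcal{C})}$ from below.

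The face rotation $R \in I$ of angle $2\pi/3$ about $\hat{\vu} := (\va + \vb + \vc)/|\va + \vb + \vc|$ cyclically permutes $A \to B \to C$. By Lemma \ref{lem:6}, $\overline{\mathcal{C}} = (E + R + R^2)\overline{\mathcal{C}(A, B)}$ and $\overline{\Lambda(\mathcal{C})} = (E + R + R^2)\overline{\Lambda(\mathcal{C}(A, B))}$; the identity $(E + R + R^2)\vv = 3(\vv \cdot \hat{\vu})\hat{\vu}$, valid for any order-three rotation about $\hat{\vu}$, then gives
\begin{equation*}
\overline{\mathcal{C}} \cdot \overline{\Lambda(\mathcal{C})} = 9\,\bigl(\overline{\mathcal{C}(A, B)} \cdot \hat{\vu}\bigr)\bigl(\overline{\Lambda(\mathcal{C}(A, B))} \cdot \hat{\vu}\bigr).
\end{equation*}

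Next, the edge $AB$ is stabilized in $I$ by the order-two rotation $S$ about the axis through $o$ along $\va + \vb$; since $S$ swaps $A$ and $B$, one has $S\mathcal{C}(A,B) = \mathcal{C}(B,A)$, whence $S\overline{\mathcal{C}(A, B)} = -\overline{\mathcal{C}(A, B)}$ and, via Lemma \ref{lem:6}, $S\overline{\Lambda(\mathcal{C}(A, B))} = -\overline{\Lambda(\mathcal{C}(A, B))}$. Thus both vectors lie in the $(-1)$-eigenplane of $S$, which is spanned by $\vn := (\va \times \vb)/|\va \times \vb|$ and $\va - \vb$. The $R$-symmetry forces $\va \cdot \vb = \vb \cdot \vc = \vc \cdot \va =: c$, hence $\hat{\vu} \perp (\va - \vb)$, and since $\overline{\mathcal{C}(A, B)} = |o * \mathcal{C}(A, B)|\,\vn$, the previous display reduces to
\begin{equation*}
\overline{\mathcal{C}} \cdot \overline{\Lambda(\mathcal{C})} = 9\,(\vn \cdot \hat{\vu})^2\,\overline{\mathcal{C}(A, B)} \cdot \overline{\Lambda(\mathcal{C}(A, B))}.
\end{equation*}
A Gram-determinant calculation using $\det(\va, \vb, \vc)^2 = (1-c)^2(1+2c)$, $|\va \times \vb|^2 = 1 - c^2$, and $|\va + \vb + \vc|^2 = 3(1 + 2c)$ yields $(\vn \cdot \hat{\vu})^2 = (1 - c)/(3(1 + c))$.

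Finally, the $5$-fold rotation of $I$ about $oA$ fixes $\va$, so Lemma \ref{lem:6} forces $\Lambda(\va) \parallel \va$, and analogously $\Lambda(\vb) \parallel \vb$. Proposition \ref{prop:1} then yields $\overline{\mathcal{C}(A, B)} \cdot \overline{\Lambda(\mathcal{C}(A, B))} \geq (1 - c)/2$; assembling everything gives
\begin{equation*}
|K|\,|K^\circ| \geq 400 \cdot \frac{1}{9} \cdot 9 \cdot \frac{1 - c}{3(1 + c)} \cdot \frac{1 - c}{2} = \frac{200(1 - c)^2}{3(1 + c)},
\end{equation*}
which equals $\mathcal{P}(\Pentagon)$ once $c = \va \cdot \vb = 1/\sqrt{5}$ is substituted. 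The main obstacle is the algebraic reduction using the edge rotation $S$: without the key observation $\hat{\vu} \perp (\va - \vb)$, the component of $\overline{\Lambda(\mathcal{C}(A, B))}$ along $\va - \vb$ would remain uncontrolled and the clean factorization needed to invoke Proposition \ref{prop:1} would fail.
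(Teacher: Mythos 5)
Your proposal is correct and follows essentially the same route as the paper's proof: the same factor-of-$400$ fundamental-domain decomposition, Lemma \ref{lem:7}, the face rotation $R$ to collapse the three boundary arcs, the edge rotation (the paper's $R_{AB}$, your $S$) to kill the component of $\overline{\Lambda(\mathcal{C}(A,B))}$ along $\va+\vb$, the $5$-fold vertex rotation to get $\Lambda(\va)\parallel\va$, and Proposition \ref{prop:1}. The only difference is cosmetic: you package the linear algebra via the projection identity $(E+R+R^2)\vv=3(\vv\cdot\hat{\vu})\hat{\vu}$ and Gram determinants in the parameter $c$, where the paper computes in explicit icosahedral coordinates; both yield the factor $3(1-c)/(1+c)=3/\phi^2$ and the same final value $\tfrac{80}{3}(5-2\sqrt{5})=\mathcal{P}(\Pentagon)$.
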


\begin{proof}
Let $\Pentagon$ be the regular icosahedron with the twelve vertices
\begin{equation*}
(0,\pm 1,\pm \phi), \quad (\pm \phi, 0, \pm 1), \quad (\pm 1,\pm \phi,0),
\end{equation*}
where $\phi=(1+\sqrt{5})/2$
(see \cite{Cox}*{pp.\,52--53}).
We put
\begin{equation*}
\va:=
\begin{pmatrix}
0 \\ 1 \\ \phi
\end{pmatrix}, \quad
\vb:=
\begin{pmatrix}
\phi \\ 0 \\ 1 
\end{pmatrix}, \quad
\vc:=
\begin{pmatrix}
1 \\ \phi \\ 0
\end{pmatrix}.
\end{equation*}
Let $R$ be a rotation through the angle $2 \pi/3$ about the axis ${}^t (1,1,1)$.
Then, we have 
\begin{equation*}
R=
\begin{pmatrix}
0 & 0 & 1 \\
1 & 0 & 0 \\
0 & 1 & 0
\end{pmatrix} \in I, \quad
R(A)=B, \quad
R(B)=C, \quad
R(C)=A.
\end{equation*}
By a simple calculation, we see that
$|\Pentagon|=10(3+\sqrt{5})/3$ and $|\Pentagon^\circ|=2(25-11\sqrt{5})$.
Let $K \in \K^3(I)$.
By Proposition \ref{prop:sch}, we may assume that $K \in \check{K}^3(I)$.
By a dilation, we can also assume that $A,B,C \in \partial K$.
Putting $\tilde{K}:=o*\mathcal{S}_{K}(A,B,C), \tilde{K}^\circ:=o*\Lambda(\mathcal{S}_{K}(A,B,C))$,
by the $I$-symmetry, we obtain
\begin{equation*}
|K| \, |K^\circ| = 400 |\tilde{K}| \, |\tilde{K}^\circ|.
\end{equation*}
By Lemma \ref{lem:7}, we get
\begin{equation}
\label{eq:11}
9 |\tilde{K}| \, |\tilde{K}^\circ| \geq 
\left(
\overline{\mathcal{C}(A,B)}
+
\overline{\mathcal{C}(B,C)}
+
\overline{\mathcal{C}(C,A)}
\right)
\cdot
\left(
\overline{\Lambda(\mathcal{C}(A,B))}
+
\overline{\Lambda(\mathcal{C}(B,C))}
+
\overline{\Lambda(\mathcal{C}(C,A))}
\right).
\end{equation}

Since
\begin{equation*}
\mathcal{C}(B,C)=
R(\mathcal{C}(A,B)), \quad
\mathcal{C}(C,A)=
R^2(\mathcal{C}(A,B)),
\end{equation*}
the right-hand side of \eqref{eq:11} equals
\begin{equation*}
(E+R+R^2)
\overline{\mathcal{C}(A,B)}
\cdot
(E+R+R^2)
\overline{\Lambda(\mathcal{C}(A,B))}.
\end{equation*}
Here, by definition, we have $\overline{\mathcal{C}(A,B)} \parallel {}^t (1, \phi^2, -\phi)$.
Let $R_{AB}$ be the rotation through the angle $\pi$ about the axis through $o$ and the midpoint of the segment $AB$.
Then $R_{AB} \in I$ and 
\begin{equation}
\label{eq:6}
R_{AB} \overline{\Lambda(\mathcal{C}(A,B))}
=
\overline{\Lambda(\mathcal{C}(B,A))}
=
-\overline{\Lambda(\mathcal{C}(A,B))}
\end{equation}
hold.
Note that the three vectors
\begin{equation*}
\va \times \vb =
\begin{pmatrix}
1 \\ \phi^2 \\ -\phi
\end{pmatrix}, \quad
\va-\vb=
\begin{pmatrix}
-\phi \\ 1 \\ \phi-1
\end{pmatrix}, \quad
\va+\vb=
\begin{pmatrix}
\phi \\ 1 \\ \phi+1
\end{pmatrix}
\end{equation*}
are orthogonal to each other.
Putting
$\overline{\Lambda(\mathcal{C}(A,B))}=y_1 (\va \times \vb) + y_2 (\va-\vb) + y_3 (\va+\vb)$,
by \eqref{eq:6}, we obtain
\begin{equation*}
- y_1 (\va \times \vb) - y_2 (\va-\vb) + y_3 (\va+\vb)
=
- y_1 (\va \times \vb) - y_2 (\va-\vb) - y_3 (\va+\vb),
\end{equation*}
so that $y_3=0$.
We put $x\, {}^t(1, \phi^2, -\phi):=\overline{\mathcal{C}(A,B)}$ for a nonzero real number $x$.
Then,
\begin{equation*}
\overline{\mathcal{C}(A,B)} \cdot \overline{\Lambda(\mathcal{C}(A,B))}
= x
\begin{pmatrix}
1 \\ \phi^2 \\ -\phi
\end{pmatrix}
\cdot
\{y_1 (\va \times \vb) + y_2 (\va-\vb)\}
=
4x y_1 \phi^2
\end{equation*}
holds and the right-hand side of \eqref{eq:11} becomes
\begin{equation*}
\begin{aligned}
&
x(E+R+R^2)
\begin{pmatrix}
1 \\ \phi^2 \\ -\phi
\end{pmatrix}
\cdot
(E+R+R^2)
\begin{pmatrix}
y_1-\phi y_2 \\ \phi^2 y_1 + y_2 \\ -\phi y_1 + (\phi-1) y_2
\end{pmatrix}
=4x y_1
\begin{pmatrix}
1 \\ 1 \\ 1
\end{pmatrix}
\cdot
\begin{pmatrix}
1 \\ 1 \\ 1
\end{pmatrix}
=12x y_1 \\
&= \frac{3}{\phi^2} \overline{\mathcal{C}(A,B)} \cdot \overline{\Lambda(\mathcal{C}(A,B))}
= \frac{3(3-\sqrt{5})}{2} \overline{\mathcal{C}(A,B)} \cdot \overline{\Lambda(\mathcal{C}(A,B))}.
\end{aligned}
\end{equation*}

We denote by $R_A$ the rotation through the angle $2\pi/5$ about the axis through $o$ and $A$.
Then $R_A \in I$ and $R_A \va=\va$ hold, so that we have $\Lambda(\va) \parallel \va$.
Similarly, $\Lambda(\vb) \parallel \vb$ holds.
Since $|\va|=|\vb|$, by Proposition \ref{prop:1}, we obtain
\begin{equation*}
\overline{\mathcal{C}(A,B)} \cdot \overline{\Lambda(\mathcal{C}(A,B))}
\geq 
\frac{1}{2} \left(1- \frac{\va \cdot \vb}{|\va| \, |\vb|}\right)=
\frac{1}{2} \left(1- \frac{1}{\sqrt{5}}\right).
\end{equation*}
Consequently,
\begin{equation*}
 |K| \, |K^\circ| \geq \frac{80}{3} (5- 2\sqrt{5}).
\end{equation*}
Since the right-hand side equals the volume product of $\Pentagon$, it completes the proof.
\end{proof}

\subsection{The case $G=C_{\ell h}$}

\begin{proposition}
\label{prop:3}
Assume that $\ell \geq 3$.
Then the inequality $\mathcal{P}(K) \geq \mathcal{P}(P_{\ell})$ holds for any $C_{\ell h}$-invariant convex body $K \in \K^3(C_{\ell h})$.
\end{proposition}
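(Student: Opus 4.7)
The plan is to apply the signed volume estimate (Lemma~\ref{lem:7}) to a triangular closed curve on $\partial K$ whose enclosed surface is a fundamental domain of the $C_{\ell h}$-action, and then to reduce the resulting lower bound to two planar estimates from Section~\ref{sec:2}: one in the horizontal equatorial plane $\{z=0\}$, and one in a vertical plane through the axis of rotation. By Proposition~\ref{prop:sch} I may assume $K\in\check{\mathcal{K}}^3(C_{\ell h})$. After rotating $K$ about the $z$-axis (an operation in $\mathcal{G}$ preserving $\mathcal{P}(K)$ and the $C_{\ell h}$-invariance), I place $A:=\rho_K(e_1)e_1$ on the positive $x$-axis, and set $B:=R_\ell A$ and $T:=\rho_K(e_3)e_3$. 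The reflection $H\in C_{\ell h}$ fixes $A$, forcing the tangent plane of $\partial K$ at $A$ to be vertical, so in particular $A\in\partial L$ with $L:=K\cap\{z=0\}$; the rotation $R_\ell$ fixes $T$, forcing $\Lambda_K(T)\in\R e_3$. Let $U:=\mathcal{S}_K(A,B,T)$ be the piece of $\partial K$ enclosed by $\mathcal{C}:=\mathcal{C}_K(A,B,T,A)$. Since $\partial K$ is tiled by the $2\ell$ images $gU$ ($g\in C_{\ell h}$) and, by $G$-equivariance of $\Lambda$ (Lemma~\ref{lem:6}), $\partial K^\circ$ is likewise tiled by the images of $\Lambda(U)$, the wedges $\hat K:=o*U$ and $o*\Lambda(U)$ satisfy $|K|=2\ell|\hat K|$ and $|K^\circ|=2\ell|o*\Lambda(U)|$. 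Hence Lemma~\ref{lem:7} gives
\begin{equation*}
|K||K^\circ|\geq\tfrac{4\ell^2}{9}\,\overline{\mathcal{C}}\cdot\overline{\Lambda(\mathcal{C})}.
\end{equation*}

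Using $\mathcal{C}(B,T)=R_\ell\mathcal{C}(A,T)$, the corresponding identity for $\Lambda$, formula~\eqref{eq:d}, and the observation (from Lemma~\ref{lem:Lambda} applied to $\{z=0\}$) that $\Lambda(\mathcal{C}(A,B))$ fills the wedge of $\partial L^\circ$ between $\Lambda(A)$ and $R_\ell\Lambda(A)$, a direct computation yields
\begin{equation*}
\overline{\mathcal{C}}=(\alpha_3\sin\xi,\ \alpha_3(1-\cos\xi),\ \alpha_1),
\end{equation*}
\begin{equation*}
\overline{\Lambda(\mathcal{C})}=(w_1(1-\cos\xi)+w_2\sin\xi,\ w_2(1-\cos\xi)-w_1\sin\xi,\ |L^\circ|/\ell),
\end{equation*}
where $\alpha_1:=|L\cap\pos(A,B)|=|L|/\ell$, $\alpha_3:=|K\cap\{y=0,\ x\geq 0,\ z\geq 0\}|$, and $(w_1,w_2,w_3):=\overline{\Lambda(\mathcal{C}(T,A))}$. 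In the dot product the $w_1$-terms cancel by $R_\ell$-symmetry --- this cancellation is the key algebraic mechanism, since we have no a priori control on $w_1$ --- leaving
\begin{equation*}
\overline{\mathcal{C}}\cdot\overline{\Lambda(\mathcal{C})}=2\alpha_3 w_2(1-\cos\xi)+\alpha_1\,|L^\circ|/\ell.
\end{equation*}

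The main technical step, which I expect to be the principal obstacle, is identifying $w_2$ as a two-dimensional polar area in the $xz$-section. The $y$-component of a signed-area vector is a line integral in $x$ and $z$ alone, so $w_2$ is unchanged under projection to $\{y=0\}$. By Lemma~\ref{lem:Lambda} applied to this plane, $\pi_{xz}\Lambda_K=\Lambda_{K_{xz}}$ on $\partial K_{xz}$, where $K_{xz}:=K\cap\{y=0\}$; the $H$- and $R_\ell$-symmetries give $\Lambda_{K_{xz}}(A)=(1/a)e_1$ and $\Lambda_{K_{xz}}(T)=(1/t)e_3$ with $a=\rho_K(e_1)$, $t=\rho_K(e_3)$. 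A careful orientation check then shows that $\Lambda_{K_{xz}}(\mathcal{C}(T,A))$ traces the first-quadrant arc of $\partial(K_{xz})^\circ$, so $w_2=|(K_{xz})^\circ\cap\pos(A,T)|$.

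Finally, I invoke the two planar lemmas. Lemma~\ref{lem:2}, applied to the $C_\ell$-invariant body $L\subset\R^2$ with the rotated pair $(A,R_\ell A)$, yields $\alpha_1\,|L^\circ|/\ell\geq(1-\cos\xi)/2$. Lemma~\ref{lem:1}, applied to $K_{xz}\subset\R^2$ with the dual pairs $(A,(1/a)e_1)$ and $(T,(1/t)e_3)$, gives $(A-T)\cdot((1/a)e_1-(1/t)e_3)=2$, whence $\alpha_3 w_2\geq 1/2$. Summing,
\begin{equation*}
\overline{\mathcal{C}}\cdot\overline{\Lambda(\mathcal{C})}\geq(1-\cos\xi)+\tfrac{1}{2}(1-\cos\xi)=\tfrac{3}{2}(1-\cos\xi),
\end{equation*}
so $|K||K^\circ|\geq(2\ell^2/3)(1-\cos\xi)=\mathcal{P}(P_\ell)$, as required.
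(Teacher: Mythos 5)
Your argument is correct and is essentially the paper's own proof: the same fundamental domain $o*\mathcal{S}_K(T,A,B)$, the same application of Lemma \ref{lem:7}, the same cancellation of cross terms by the $R_\ell$- and $H$-symmetries, and the same reduction to the two planar estimates of Section \ref{sec:2} (the paper normalizes both radii to $1$ via $\mathcal{G}$ and routes both bounds through Proposition \ref{prop:1}, while you keep general radii $a,t$ and invoke Lemmas \ref{lem:1} and \ref{lem:2} directly --- a purely cosmetic difference). The one point to tighten is the claim $\overline{\Lambda(\mathcal{C}(A,B))}=(0,0,|L^\circ|/\ell)$: Lemma \ref{lem:Lambda} only controls the projection of $\Lambda(\mathcal{C}(A,B))$ onto $\{z=0\}$, so to kill the first two components (which would otherwise contribute uncontrolled terms to the dot product) you need the $H$-equivariance of Lemma \ref{lem:6}, which gives $\overline{\Lambda(\mathcal{C}(A,B))}=-H\,\overline{\Lambda(\mathcal{C}(A,B))}$ --- exactly the paper's argument that its term $\text{(II)}$ vanishes.
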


\begin{proof}
Let $K \in \K^3(C_{\ell h})$ for $\ell \geq 3$.
To prove the inequality for $\mathcal{P}$, we may assume that $K \in \checkK^3(C_{\ell h})$ by Proposition \ref{prop:sch}.
We put
\begin{equation*}
\vp:=
\begin{pmatrix}
0 \\ 0 \\ 1
\end{pmatrix}, \quad
\va:=
\begin{pmatrix}
1 \\ 0 \\ 0
\end{pmatrix}, \quad
\vb:=R_\ell \va=
\begin{pmatrix}
\cos \xi \\ \sin \xi \\ 0
\end{pmatrix},
\end{equation*}
where $\xi=2\pi/\ell$.
\begin{center}
\includegraphics[height=10em]{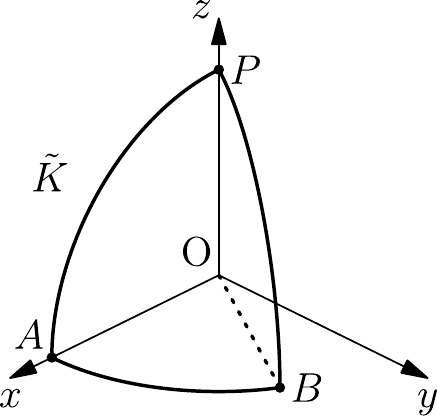}\qquad
\includegraphics[height=10em]{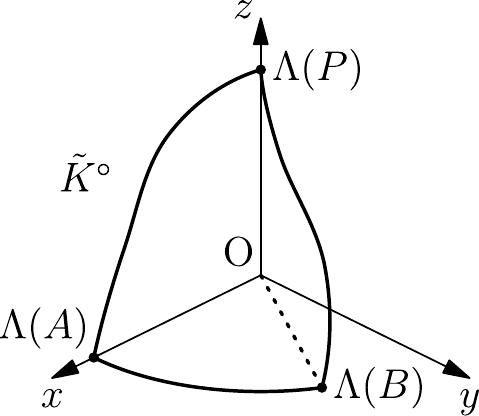}
\end{center}
Recall that $\mathcal{G}$ is a subgroup of $GL(3,\mathbb R)$ defined in Section \ref{sec:1.3}.
Since $gh=hg$ holds for every $g \in \mathcal{G}$ and every $h \in C_{\ell h}$,
we see that $g K \in \check{K}(C_{\ell h})$ for any $g \in \mathcal{G}$.
Thus, we can assume that $P, A \in \partial K$.
Setting $\tilde{K}:=o*\mathcal{S}_{K}(P,A,B)$ and $\tilde{K}^\circ:= o* \Lambda(\mathcal{S}_{K}(P,A,B))$, by the $C_{\ell h}$-symmetries of $K$ and $K^\circ$, we have 
\begin{equation*}
 |K|=2\ell |\tilde{K}|, \quad
 |K^\circ|=2\ell |\tilde{K}^\circ|.
\end{equation*}
By Lemma \ref{lem:7}, we obtain
\begin{equation}
\label{eq:9}
9 |\tilde{K}| \, |\tilde{K}^\circ|
\geq 
 \left(
 \overline{\mathcal{C}(P,A)}
+\overline{\mathcal{C}(A,B)}
+\overline{\mathcal{C}(B,P)}
\right) \cdot
\left(
 \overline{\Lambda(\mathcal{C}(P,A))}
+\overline{\Lambda(\mathcal{C}(A,B))}
+\overline{\Lambda(\mathcal{C}(B,P))}
\right).
\end{equation}
Since $\mathcal{C}(B,P)=R_\ell(\mathcal{C}(A,P))$, we obtain
\begin{equation*}
 \overline{\mathcal{C}(B,P)} 
=R_\ell \overline{\mathcal{C}(A,P)}
=-R_\ell \overline{\mathcal{C}(P,A)}.
\end{equation*}
Since
$\Lambda(\mathcal{C}(B,P))=\Lambda(R_\ell(\mathcal{C}(A,P)))=R_\ell(\Lambda(\mathcal{C}(A,P)))$
by Lemma \ref{lem:6}, we get
\begin{equation*}
 \overline{\Lambda(\mathcal{C}(B,P))} 
=R_\ell \overline{\Lambda(\mathcal{C}(A,P))}
=-R_\ell \overline{\Lambda(\mathcal{C}(P,A))}.
\end{equation*}
It follows from the above equalities that the right-hand side of \eqref{eq:9} equals
\begin{equation*}
\begin{aligned}
& (E-R_\ell) \overline{\mathcal{C}(P,A)} \cdot (E-R_\ell) \overline{\Lambda(\mathcal{C}(P,A))}
+
(E-R_\ell) \overline{\mathcal{C}(P,A)} \cdot \overline{\Lambda(\mathcal{C}(A,B))} \\
& +
\overline{\mathcal{C}(A,B)} \cdot (E-R_\ell) \overline{\Lambda(\mathcal{C}(P,A))}
+
\overline{\mathcal{C}(A,B)} \cdot \overline{\Lambda(\mathcal{C}(A,B))}.
\end{aligned}
\end{equation*}
Here we denote these four terms by $\text{(I)}, \text{(II)}, \text{(III)}$, and $\text{(IV)}$, respectively.

Let us start with the calculation of $\text{(III)}$.
For any $\vx={}^t (x_1, x_2, x_3) \in \R^3$, we have
\begin{equation*}
 (E - R_\ell) \vx =
\begin{pmatrix}
(1-\cos \xi) x_1 + (\sin \xi) x_2 \\
-(\sin \xi) x_1 + (1-\cos \xi) x_2 \\
0
\end{pmatrix}.
\end{equation*}
On the other hand, by definition, $\overline{\mathcal{C}(A,B)} \parallel {}^t (0,0,1)$ holds, which immediately implies that $\text{(III)}=0$.
Next, since
$\Lambda(\mathcal{C}(A,B)) = \Lambda(H(\mathcal{C}(A,B))) =H\Lambda(\mathcal{C}(A,B)))$,
we obtain
\begin{equation*}
\overline{\Lambda(\mathcal{C}(A,B))} = \overline{H(\Lambda(\mathcal{C}(A,B)))} = 
- H \overline{\Lambda(\mathcal{C}(A,B))}.
\end{equation*}
Hence $\overline{\Lambda(\mathcal{C}(A,B))} \parallel {}^t (0,0,1)$ holds,
which means that $\text{(II)}=0$.
Moreover, we can put 
${}^t (0, x_2, 0) := \overline{\mathcal{C}(P,A)}$ and 
$ {}^t (y_1, y_2, y_3) := \overline{\Lambda(\mathcal{C}(P,A))}$ by their definitions.
By the above calculation, we get
\begin{equation*}
\begin{aligned}
\text{(I)}
&=
\begin{pmatrix}
(\sin \xi) x_2 \\ (1- \cos \xi) x_2 \\ 0
\end{pmatrix}\cdot
\begin{pmatrix}
(1-\cos \xi) y_1 + (\sin \xi) y_2 \\
-(\sin \xi) y_1 + (1-\cos \xi) y_2 \\
0
\end{pmatrix}
=2(1-\cos \xi) x_2 y_2 \\
&=2(1-\cos \xi) \overline{\mathcal{C}(P,A)} \cdot \overline{\Lambda(\mathcal{C}(P,A))}.
\end{aligned}
\end{equation*}
Consequently, we obtain
\begin{equation}
\label{eq:4}
|K| \, |K^\circ| \geq \frac{4 \ell^2}{9} \left(
2(1-\cos \xi) \overline{\mathcal{C}(P,A)} \cdot \overline{\Lambda(\mathcal{C}(P,A))}
+ \overline{\mathcal{C}(A,B)} \cdot \overline{\Lambda(\mathcal{C}(A,B))}
\right).
\end{equation}

Finally, we compute the right-hand side of \eqref{eq:4}.
Since $R_\ell \, \vp=\vp$ and $\vp={}^t(0,0,1)$,
we get $R_\ell \Lambda(\vp) = \Lambda(\vp)$ and $\Lambda(\vp) = \vp$.
From $H\va=\va$ and $\vb = R_\ell \va$, we have $H \Lambda(\va) = \Lambda(\va)$ and $\Lambda(\vb) = R_\ell \Lambda(\va)$.
Thus, we can put
\begin{equation*}
\Lambda(\va) =
\begin{pmatrix}
z^\circ_1 \\ z^\circ_2 \\ 0
\end{pmatrix}, \quad
\Lambda(\vb) = 
\begin{pmatrix}
z^\circ_1 \cos \xi - z^\circ_2 \sin \xi \\
z^\circ_1 \sin \xi + z^\circ_2 \cos \xi \\
0
\end{pmatrix}
\end{equation*}
for some $z^\circ_1, z^\circ_2 \in \R$.
Since $z_1^\circ=\va \cdot \Lambda(\va)=1$, Proposition \ref{prop:1} asserts that 
\begin{equation*}
\begin{aligned}
\overline{\mathcal{C}(P,A)} \cdot \overline{\Lambda(\mathcal{C}(P,A))} 
&\geq \frac{1}{4}(\vp-\va)\cdot(\Lambda(\vp)-\Lambda(\va))
=\frac{1}{2}, \\
\overline{\mathcal{C}(A,B)} \cdot \overline{\Lambda(\mathcal{C}(A,B))}
& \geq \frac{1}{4}(\va-\vb)\cdot(\Lambda(\va)-\Lambda(\vb))
=\frac{1}{2} (1-\cos \xi),
\end{aligned}
\end{equation*}
so that
\begin{equation*}
|K| \, |K^\circ| \geq \frac{2 \ell^2}{3} \left(
1-\cos \xi
\right).
\end{equation*}
It is easy to check that the right-hand side equals the volume product of $P_\ell$.
\end{proof}

\subsection{The case $G=D_\ell$}

\begin{proposition}
\label{prop:4}
Assume that $\ell \geq 3$.
Then the inequality $\mathcal{P}(K) \geq \mathcal{P}(P_\ell)$ holds for any $D_\ell$-invariant convex body $K \in \K^3(D_\ell)$.
\end{proposition}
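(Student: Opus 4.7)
The plan is to adapt the argument of Proposition \ref{prop:3} (the $C_{\ell h}$ case), exploiting the fact that $P_\ell$ is again the conjectured minimizer. By Proposition \ref{prop:sch} we may assume $K \in \checkK^3(D_\ell)$. Since every element of the diagonal group $\mathcal{G}'$ commutes with every element of $D_\ell$, after applying a suitable element of $\mathcal{G}'$ we may further assume that $P := {}^t(0,0,1)$ and $A := {}^t(1,0,0)$ lie on $\partial K$, so that $B := R_\ell A = {}^t(\cos\xi,\sin\xi,0) \in \partial K$ by $D_\ell$-invariance. The ``upper wedge'' $\mathcal{S}_K(P,A,B)$ (in cylindrical coordinates, the portion of $\partial K$ with $0 \le \theta \le \xi$ and $z \ge 0$) is a fundamental domain for the $D_\ell$-action on $\partial K$: a direct computation shows that the $180^\circ$ rotation $R_\ell V H \in D_\ell$ acts as $(r,\theta,z) \mapsto (r,\xi-\theta,-z)$, identifying the upper and lower halves of the wedge $\{0 \le \theta \le \xi\}$. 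Setting $\tilde K := o*\mathcal{S}_K(P,A,B)$ and $\tilde K^\circ := o*\Lambda(\mathcal{S}_K(P,A,B))$, we obtain $|K| = 2\ell\,|\tilde K|$ and $|K^\circ| = 2\ell\,|\tilde K^\circ|$, and Lemma \ref{lem:7} applied to $\mathcal{C} := \mathcal{C}(P,A,B,P)$ yields $9|\tilde K|\,|\tilde K^\circ| \ge \overline{\mathcal{C}}\cdot\overline{\Lambda\mathcal{C}}$.

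From $R_\ell P = P$ and $R_\ell A = B$ we have $\mathcal{C}(B,P) = -R_\ell \mathcal{C}(P,A)$, so
\[
\overline{\mathcal{C}} = (E - R_\ell)\overline{\mathcal{C}(P,A)} + \overline{\mathcal{C}(A,B)}, \qquad \overline{\Lambda\mathcal{C}} = (E - R_\ell)\overline{\Lambda\mathcal{C}(P,A)} + \overline{\Lambda\mathcal{C}(A,B)}
\]
(the second by Lemma \ref{lem:6}), and expanding yields four terms (I)--(IV) exactly as in Proposition \ref{prop:3}. The crux is that the cross terms (II), (III) again vanish, by two structural observations. First, because each segment $\overline{PA}$, $\overline{AB}$ lies in a coordinate plane and the corresponding curve is its radial rescaling, $\mathcal{C}(P,A)$ lies in the $xz$-plane and $\mathcal{C}(A,B)$ lies in the $xy$-plane; hence $\overline{\mathcal{C}(P,A)}$ is a multiple of ${}^t(0,1,0)$ and $\overline{\mathcal{C}(A,B)}$ is a multiple of ${}^t(0,0,1)$. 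This replaces the $H$-symmetry argument used in Proposition \ref{prop:3}, which is unavailable since $H \notin D_\ell$. Second, because $R_\ell V H \in D_\ell \cap SO(3)$ swaps $A \leftrightarrow B$ and thus sends $\mathcal{C}(A,B)$ to $-\mathcal{C}(A,B)$ as an oriented curve, Lemma \ref{lem:6} forces $\overline{\Lambda\mathcal{C}(A,B)}$ into the $2$-dimensional $(-1)$-eigenspace of $R_\ell V H$, which is spanned by ${}^t(\sin(\xi/2),-\cos(\xi/2),0)$ and ${}^t(0,0,1)$. A short calculation gives $(E - R_\ell)\overline{\mathcal{C}(P,A)} \parallel {}^t(\cos(\xi/2),\sin(\xi/2),0)$, the rotation axis of $R_\ell V H$, which is orthogonal to both basis vectors of this eigenspace; hence (II) $= 0$. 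Since $\overline{\mathcal{C}(A,B)}$ is vertical while $(E - R_\ell)\overline{\Lambda\mathcal{C}(P,A)}$ lies in the $xy$-plane, (III) $= 0$ as well.

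The diagonal terms are bounded just as in the $C_{\ell h}$ case: a direct calculation yields (I) $= 2(1-\cos\xi)\,\overline{\mathcal{C}(P,A)}\cdot\overline{\Lambda\mathcal{C}(P,A)}$ and (IV) $= \overline{\mathcal{C}(A,B)}\cdot\overline{\Lambda\mathcal{C}(A,B)}$. To invoke Proposition \ref{prop:1} we need $\Lambda(P) = P$, $\Lambda(A) = A$, $\Lambda(B) = B$; these follow from Lemma \ref{lem:6} together with the existence of rotations in $D_\ell$ fixing each of these three points (the stabilizers contain $R_\ell$, $V H$, and $R_\ell^2 V H$ respectively, each acting as a rotation whose axis passes through the fixed point). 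Proposition \ref{prop:1} then yields $\overline{\mathcal{C}(P,A)}\cdot\overline{\Lambda\mathcal{C}(P,A)} \ge 1/2$ and $\overline{\mathcal{C}(A,B)}\cdot\overline{\Lambda\mathcal{C}(A,B)} \ge (1-\cos\xi)/2$, so
\[
\overline{\mathcal{C}}\cdot\overline{\Lambda\mathcal{C}} \ge (1-\cos\xi) + \tfrac{1}{2}(1-\cos\xi) = \tfrac{3}{2}(1-\cos\xi),
\]
and therefore $|K|\,|K^\circ| = 4\ell^2 |\tilde K|\,|\tilde K^\circ| \ge \tfrac{2\ell^2}{3}(1-\cos\xi) = \mathcal{P}(P_\ell)$. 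I expect the main obstacle to be locating the correct substitute for the horizontal reflection $H$: the natural candidate $R_\ell V H$ produces a $2$-dimensional rather than $1$-dimensional $(-1)$-eigenspace for $\overline{\Lambda\mathcal{C}(A,B)}$, and one must verify that its extra horizontal direction is orthogonal to $(E - R_\ell)\overline{\mathcal{C}(P,A)}$ so that the cross term (II) still collapses.
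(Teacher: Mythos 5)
Your proof is correct and follows essentially the same route as the paper's: the same fundamental domain $o*\mathcal{S}_K(P,A,B)$ with $|K|=2\ell|\tilde K|$, the same four-term expansion, the same identification $\Lambda(\vp)=\vp$, $\Lambda(\va)=\va$, $\Lambda(\vb)=\vb$ via stabilizers, and the same use of the symmetry $R_\ell VH$ to kill the cross term (II). The only difference is presentational: the paper verifies $(E-R_\ell)\overline{\mathcal{C}(P,A)}\cdot\overline{\Lambda(\mathcal{C}(A,B))}=0$ by writing out the singular linear system forced by $\overline{\Lambda(\mathcal{C}(A,B))}=-R_\ell VH\,\overline{\Lambda(\mathcal{C}(A,B))}$, whereas you phrase the identical fact as orthogonality of $(E-R_\ell)\overline{\mathcal{C}(P,A)}$ (the rotation axis of the $\pi$-rotation $R_\ell VH$) to its $(-1)$-eigenspace.
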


\begin{proof}
Let $K \in \K^3(D_\ell)$ for $\ell \geq 3$.
Recall that $D_\ell=\braket{R_\ell, VH}$.
To prove the inequality, we may assume that $K \in \checkK^3(D_\ell)$ by Proposition \ref{prop:sch}.
We put
\begin{equation*}
\vp=
\begin{pmatrix}
0 \\ 0 \\ 1 
\end{pmatrix}, \quad 
\va=
\begin{pmatrix}
1 \\ 0 \\ 0 
\end{pmatrix}, \quad 
\vq=
\begin{pmatrix}
0 \\ 0 \\ -1 
\end{pmatrix}, \quad 
\vb=
\begin{pmatrix}
\cos \xi \\ \sin \xi \\ 0
\end{pmatrix}.
\end{equation*}
In a similar way as the proof of Proposition \ref{prop:3},
by a linear transformation in $\mathcal{G}'$,
we can assume that $A, B, P, Q \in \partial K$.
Let us consider the closed regions $\tilde{K}:=o*\mathcal{S}_{K}(P,A,B)$ and $\tilde{K}^\circ:=o*\Lambda(\mathcal{S}_{K}(P,A,B))$.
Since $(VH)R_\ell^{-1}(\mathcal{S}_{K}(P,A,B))=\mathcal{S}_{K}(Q,B,A)$ holds, we have
\begin{equation*}
|K| = 2\ell |\tilde{K}|, \quad
|K^\circ| = 2\ell |\tilde{K}^\circ|.
\end{equation*}
Here, we show that
\begin{equation}
\label{eq:1}
(E-R_\ell) \overline{\mathcal{C}(P,A)} \cdot \overline{\Lambda(\mathcal{C}(A,B))} =0.
\end{equation}
For $g \in D_\ell$, we have $g(K)=K$ and $g(\partial K)=\partial K$, so that
\begin{equation*}
g(\mathcal{C}_K (X, Y)) =
\mathcal{C}_{gK} (g(X), g(Y)) =
\mathcal{C}_K (g(X), g(Y))
\end{equation*}
for any $\vx, \vy \in \partial K \text{ with } \vx \nparallel \vy$.
Since $R_\ell, VH \in D_\ell$, the above formula yields that
\begin{equation*}
R_\ell V H(\mathcal{C}(B,A))
=\mathcal{C}(R_\ell V H(B), R_\ell V H(A))
=\mathcal{C}(A,B).
\end{equation*}
Thus, by Lemma \ref{lem:6}, we have
\begin{equation*}
\Lambda(\mathcal{C}(A,B))
= \Lambda(R_\ell V H(\mathcal{C}(B,A)))
= R_\ell V H(\Lambda(\mathcal{C}(B,A))),
\end{equation*}
so that
\begin{equation*}
\overline{\Lambda(\mathcal{C}(A,B))} = 
\overline{R_\ell V H(\Lambda(\mathcal{C}(B,A)))}
=R_\ell V H \overline{\Lambda(\mathcal{C}(B,A))}
=- R_\ell V H \overline{\Lambda(\mathcal{C}(A,B))}.
\end{equation*}
Putting ${}^t (x_1, x_2, x_3) := \overline{\Lambda(\mathcal{C}(A,B))}$, we have
\begin{equation*}
\begin{pmatrix}
x_1 \\ x_2 \\ x_3 
\end{pmatrix}
=
\begin{pmatrix}
 \cos \xi & - \sin \xi & 0 \\
 \sin \xi & \cos \xi & 0 \\
 0 & 0 & 1
\end{pmatrix}
\begin{pmatrix}
-1 & 0 & 0 \\
 0 & 1 & 0 \\
 0 & 0 & 1
\end{pmatrix}
\begin{pmatrix}
 x_1 \\ x_2 \\ x_3
\end{pmatrix}
=
\begin{pmatrix}
-(\cos \xi) x_1 - (\sin \xi) x_2 \\
-(\sin \xi) x_1 + (\cos \xi) x_2 \\
x_3
\end{pmatrix}.
\end{equation*}
Thus
\begin{equation}
\label{eq:2}
\begin{pmatrix}
1+\cos \xi &   \sin \xi \\
  \sin \xi & 1- \cos \xi
\end{pmatrix}
\begin{pmatrix}
 x_1 \\ x_2
\end{pmatrix}
=
\begin{pmatrix}
 0 \\ 0
\end{pmatrix}
\end{equation}
holds.
On the other hand, since
\begin{equation*}
(E-R_\ell) \overline{\mathcal{C}(P,A)} =
\begin{pmatrix}
1-\cos \xi & \sin \xi & 0 \\
-\sin \xi & 1-\cos \xi & 0 \\
0 & 0 & 0
\end{pmatrix}
\begin{pmatrix}
0 \\ |\overline{\mathcal{C}(P,A)}| \\ 0
\end{pmatrix}
=
|\overline{\mathcal{C}(P,A)}|
\begin{pmatrix}
\sin \xi \\ 1-\cos \xi \\ 0
\end{pmatrix},
\end{equation*}
we obtain
\begin{equation*}
(E-R_\ell) \overline{\mathcal{C}(P,A)} \cdot \overline{\Lambda(\mathcal{C}(A,B))}
=
|\overline{\mathcal{C}(P,A)}|
\left(
(\sin \xi) x_1 + (1-\cos \xi)x_2
\right) =0
\end{equation*}
by \eqref{eq:2}, so that \eqref{eq:1} is verified.

Similarly as the case $C_{\ell h}$, by the signed volume estimate, we obtain exactly the same inequality as \eqref{eq:4}.
Since $R_\ell \vp=\vp$ and $\vp \cdot \Lambda(\vp)=1$,
we have $\Lambda(\vp)=R_\ell \Lambda(\vp)$, so that $\Lambda(\vp)=\vp$ holds.
Similarly, $VH \va=\va$ implies that $\Lambda(\va)=\va$.
Hence, $\Lambda(\vb)=\Lambda (R_\ell \va)=R_\ell \Lambda(\va)=R_\ell \va=\vb$ also holds.
Thus, in the same way as in the case $C_{\ell h}$, we can sharply estimate the right-hand side of \eqref{eq:4} from below and obtain the conclusion.
\end{proof}

\subsection{The cases $G=C_{2h}, T_h, S_6, D_{3d}$}

\begin{proof}[Proof of Corollary \ref{cor:1}]
Recall that $\Diamond$ is the regular octahedron, which is $O_h$-invariant.
By \cite{IS}*{Theorem 1}, $\Diamond$ is a minimizer of $\mathcal{P}$ on the set of $S_2$-invariant convex bodies.
Under the matrix representations used in Sections \ref{sec:1.3}, \ref{sec:4.1}, and \ref{sec:4.2},
we have
\begin{equation*}
\begin{aligned}
S_2&=\braket{-E} \subset C_{2h}=\braket{R_2, H}
\subset T_h=\braket{\pm R_A, \pm R_B, \pm R_C, \pm R_D}
\subset O_h, \\
S_2&\subset S_6 \subset D_{3d} \subset O_h' :=g O_h g^{-1},
\end{aligned}
\end{equation*}
where
\begin{equation*}
g:=
\begin{pmatrix}
\frac{\sqrt{2}}{\sqrt{3}} & -\frac{1}{\sqrt{6}} & -\frac{1}{\sqrt{6}} \\
 0 & \frac{1}{\sqrt{2}} & -\frac{1}{\sqrt{2}} \\
 \frac{1}{\sqrt{3}} & \frac{1}{\sqrt{3}} & \frac{1}{\sqrt{3}}
\end{pmatrix}
\in SO(3).
\end{equation*}
It follows from $\Diamond \in \mathcal{K}(O_h)$ that
\begin{equation*}
\mathcal{P}(\Diamond)
=\min_{K \in \mathcal{K}(S_2)} \mathcal{P}(K)
\leq \min_{K \in \mathcal{K}(C_{2h})} \mathcal{P}(K)
\leq \min_{K \in \mathcal{K}(T_h)} \mathcal{P}(K)
\leq \min_{K \in \mathcal{K}(O_h)} \mathcal{P}(K)
\leq
\mathcal{P}(\Diamond).
\end{equation*}
Moreover, since $\Diamond':=g \Diamond$ is also a minimizer of $\mathcal{P}$ on $\mathcal{K}(S_2)$ by \cite{IS}*{Theorem 1} and $\Diamond' \in \mathcal{K}(O_h')$,
we obtain
\begin{equation*}
\mathcal{P}(\Diamond')
=\min_{K \in \mathcal{K}(S_2)} \mathcal{P}(K)
\leq \min_{K \in \mathcal{K}(S_6)} \mathcal{P}(K)
\leq \min_{K \in \mathcal{K}(D_{3h})} \mathcal{P}(K)
\leq \min_{K \in \mathcal{K}(O_h')} \mathcal{P}(K)
\leq
\mathcal{P}(\Diamond').
\end{equation*}
In addition, \cite{IS}*{Theorem 1} asserts that, if $K$ is a minimizer of $\mathcal{P}$ on the set of $S_2$-invariant convex bodies, then $K$ or $K^\circ$ is a parallelepiped.
That is, $K^\circ$ or $K$ is a linear image of $\Diamond$.
Thus, we obtain the conclusion.
\end{proof}

\section{Proof of Theorem \ref{thm:2}: equality conditions}
\label{sec:5}

\subsection{The case $G=T$}

\begin{proposition}
\label{prop:8}
Let $K \in \mathcal{K}^3(T)$. If $\mathcal{P}(K)=\mathcal{P}(\triangle)$ holds, then $K$ is a dilation of $\triangle$ or $\triangle^\circ$.
\end{proposition}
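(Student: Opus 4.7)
The plan is to trace the equality backward through the chain of inequalities in the proof of Proposition~\ref{prop:5}. Assume $K \in \mathcal{K}^3(T)$ with $\mathcal{P}(K) = \mathcal{P}(\triangle) = 64/9$. After a suitable dilation we may take $A,B,C,D \in \partial K$. Moreover, the $R_A$-invariance argument from the proof of Proposition~\ref{prop:5} (the $R_A$-fixed dual face of $K^\circ$ in direction $\va$ must contain the point on $\R\va$ satisfying $\vx\cdot\va=1$, which is $\va$ itself) carries over to general $K\in\mathcal{K}^3(T)$ and gives $A,B,C,D \in \partial K \cap \partial K^\circ$.

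Equality in $\mathcal{P}(K)=64/9$ forces every intermediate inequality in that proof to be tight. In particular, taking $A^\circ=A$, $B^\circ=B$, the hypotheses $\va\parallel\va^\circ$, $\vb\parallel\vb^\circ$, $|\va|=|\vb|=1$ of Proposition~\ref{prop:2} are met, so by Lemma~\ref{lem:3} exactly one of the following alternatives occurs:
\begin{enumerate}[\upshape (i)]
\item $K \cap H_{AB} \cap \pos(A,B) = \conv\{o, A, E_{AB}, B\}$ with $E_{AB} = \frac{3}{2}(\va+\vb) = \sqrt{3}(1,0,0)$, and $\pi_{H_{AB}}(K^\circ)\cap\pos(A,B) = \conv\{o,A,B\}$; or
\item the roles of $K$ and $K^\circ$ are exchanged.
\end{enumerate}
Since $T$ acts transitively on the six edges of $\triangle$, the same alternative must hold at every edge.

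In Case~(ii), the six edges of $\triangle$ are straight segments on $\partial K$, hence $\triangle\subset K$; dually the six points $E_{XY}$ lie on $\partial K^\circ$, which forces $K\subset\bigcap_{XY}\{\vx:E_{XY}\cdot\vx\leq 1\}=[-1/\sqrt{3},1/\sqrt{3}]^3$, the cube with vertices $\{\pm\va,\pm\vb,\pm\vc,\pm\vd\}$. Next, the equality version of the signed-volume estimate---Lemma~\ref{lem:7} combined with the equality characterization of Lemma~\ref{lem:5}---applied to the face curve $\mathcal{C}(A,B,C,A)$ pins down $o*\mathcal{S}_K(A,B,C)$ to be the bipyramid $\vx_0*(o*\mathcal{C}(A,B,C,A))$ for some $\vx_0\in\mathcal{S}_K(A,B,C)$; by $R_D$-symmetry of the face piece $\vx_0=-\mu\vd$ for some $\mu\in[1/3,1]$, with $\mu=1/3$ (the apex lying in the plane of $ABC$) corresponding to $K=\triangle$.

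The main obstacle is to exclude $\mu>1/3$, i.e., to rule out a genuine non-planar ``tent'' over each face. The intended route is to combine the primal bipyramid description of $K$ with the analogous one for $K^\circ$ (which sits in Case~(i) exactly when $K$ is in Case~(ii)), and to use the polar inequality $\vx_0\cdot\vy_0\leq 1$ between the dual apices together with the cube bound to pin $\mu$ down to $1/3$; then all four face pieces are planar triangles and $K=\triangle$. Case~(i) is handled by applying the same analysis to $K^\circ$, yielding that $K^\circ$ is a dilation of $\triangle$, i.e., $K$ is a dilation of $\triangle^\circ$.
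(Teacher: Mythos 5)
Your reduction to the two-dimensional equality condition, the dichotomy at each edge, the sandwich $\triangle\subset K\subset[-1/\sqrt3,1/\sqrt3]^3$ in Case~(ii), and the identification of $o*\mathcal{S}_K(A,B,C)$ as a cone $\vx_0*(o*\mathcal{C}(A,B,C,A))$ with $\vx_0=-\mu\vd$ all match the paper's strategy (modulo the approximation bookkeeping with $K_m\to K$, and the small omitted step that $Z^\circ\in\pi_H(K^\circ)$ only gives $Z^\circ\in K^\circ$ after averaging $\vz^\circ\pm s\,\va\times\vb$ under the rotation by $\pi$ in $T$ about the axis through the midpoint of $AB$). But the step you yourself flag as ``the main obstacle'' is genuinely open in your write-up, and the route you propose for it does not close: the two apices are the test points $\vx_0=\overline{\Lambda(\mathcal{C})}/(3|\tilde K^\circ|)$ and $\vy_0=\overline{\mathcal{C}}/(3|\tilde K|)$, so
\begin{equation*}
\vx_0\cdot\vy_0=\frac{\overline{\mathcal{C}}\cdot\overline{\Lambda(\mathcal{C})}}{9\,|\tilde K|\,|\tilde K^\circ|}=1
\end{equation*}
\emph{identically} in the equality case --- the apex--apex polar pairing is exactly the tightness of Lemma~\ref{lem:7} and carries no new information --- while the cube bound only yields $\mu\le 1$ (the cube itself realizes $\mu=1$). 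So ``polar inequality between the apices plus cube bound'' cannot force $\mu=1/3$.

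What actually pins the apex down is a polar pairing between the \emph{dual} apex and the \emph{primal vertex} $A$, not between the two apices. In Case~(ii) the equality in Lemma~\ref{lem:7} also forces $\hat K^\circ=K^\circ\cap\pos(A,B,C)$ to be the cone with apex $Q^\circ=\vy_0=s\,{}^t(1,1,-1)$ over $o*\mathcal{C}_{K^\circ}(A^\circ,Z^\circ,B^\circ,X^\circ,C^\circ,Y^\circ,A^\circ)$; since the three points $\tfrac32(\va+\vb)$, $\tfrac32(\va+\vc)$, $\tfrac32(\va+\vd)$ (your $E_{AB},E_{AC},E_{AD}$) all lie on $\partial K^\circ\cap\{\vx\cdot\va=1\}$ and have $A^\circ=\va$ as their centroid, $A^\circ$ sits in the relative interior of that face, and the segment $Q^\circ A^\circ\subset\partial K^\circ$ then forces $\vq^\circ\cdot\va\ge 1$; combined with $\vq^\circ\cdot\va\le 1$ this gives $\vq^\circ=\sqrt3\,{}^t(1,1,-1)$, whence $\vq^\circ\cdot\va=\vq^\circ\cdot\vb=\vq^\circ\cdot\vc=1$ and $K\cap\pos(A,B,C)=\conv\{o,A,B,C\}$ directly (so one never needs to discuss $\mu$ for the primal cone at all). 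You would need to replace your intended final step by this argument, or an equivalent one, for the proof to be complete.
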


\begin{proof}
Suppose that $K \in \mathcal{K}^3(T)$ satisfies $\mathcal{P}(K)=\mathcal{P}(\triangle)$, i.e., $K$ is a minimizer of $\mathcal{P}$.
By a dilation of $K$, we may assume that $A, B, C, D \in \partial K$, where $A, B, C, D$ are the points used in Section \ref{sec:4.1}.
By the approximation result (Proposition \ref{prop:sch}), there is a sequence $\{K_m\}_{m \in \N} \subset \checkK^3(T)$ such that $K_m \rightarrow K$ $(m \rightarrow \infty)$ in the Hausdorff distance.
In addition, normalizing $K_m$ by their dilations, we can assume $A,B,C,D \in \partial K_m$.
Notice that, even though this normalization is done, the convergence $K_m \rightarrow K$ $(m \rightarrow \infty)$ still holds.
We denote the curve $\mathcal{C}_{K_m}(A,B)$ by $\mathcal{C}_m(A,B)$ for short.
For each convex body $K_m$, we can define the map $\Lambda_m:\partial K_m \to \partial K_m^\circ$.
Let $H$ be the plane through $o$, $A$, and $B$, and $\pi_H$ be the orthogonal projection onto $H$.
We put
\begin{equation*}
 L_m:=o*\mathcal{C}_m(A,B) \subset H, \quad L_m^\circ:=o*\pi_H(\Lambda_m(\mathcal{C}_m(A,B))) \subset H.
\end{equation*}
Then by the argument in the proof of Proposition \ref{prop:5}, we have
\begin{equation}
\label{eq:7}
\frac{9}{16} \mathcal{P}(K_m) \geq 6 |L_m| \, |L_m^\circ| \geq 4.
\end{equation}

Now we examine the two dimensional subsets $L_m$ and $L_m^\circ$ of $H$.
By the $T$-symmetry, we have $A^\circ:=\Lambda_m(A)=A$ and  $B^\circ:=\Lambda_m(B)=B$, and
\begin{equation*}
L_m=K_m \cap \pos(A,B), \quad
L_m^\circ=\pi_H(K_m^\circ) \cap \pos(A,B)
\end{equation*}
hold. Putting
\begin{equation*}
L:=K \cap \pos(A,B), \quad
L^\circ:=\pi_H(K^\circ) \cap \pos(A,B),
\end{equation*}
then we obtain that $L_m \rightarrow L$ and $L_m^\circ \rightarrow L^\circ$ on $H$ in the Hausdorff distance.
Since $K$ is a minimizer of $\mathcal{P}$,
taking $m \rightarrow \infty$ in \eqref{eq:7}, we have
\begin{equation*}
|L| \, |L^\circ|= \frac{2}{3},
\end{equation*}
which means that Proposition \ref{prop:2} holds with equality. 
Thus, either of the following two cases occurs.
\begin{enumerate}[{Case} (i):]
 \item 
$L=\conv \left\{o, A, Z, B\right\}$ and
$L^\circ= \conv \left\{o, A^\circ, B^\circ\right\}$, where $\vz=3(\va+\vb)/2$.
 \item 
$L=\conv \left\{o, A, B\right\}$ and
$L^\circ= \conv \left\{o, A^\circ, Z^\circ, B^\circ\right\}$, where $\vz^\circ=3(\va^\circ+\vb^\circ)/2$. 
\end{enumerate}

First, we consider Case (ii).
\begin{center}
\includegraphics[height=13em]{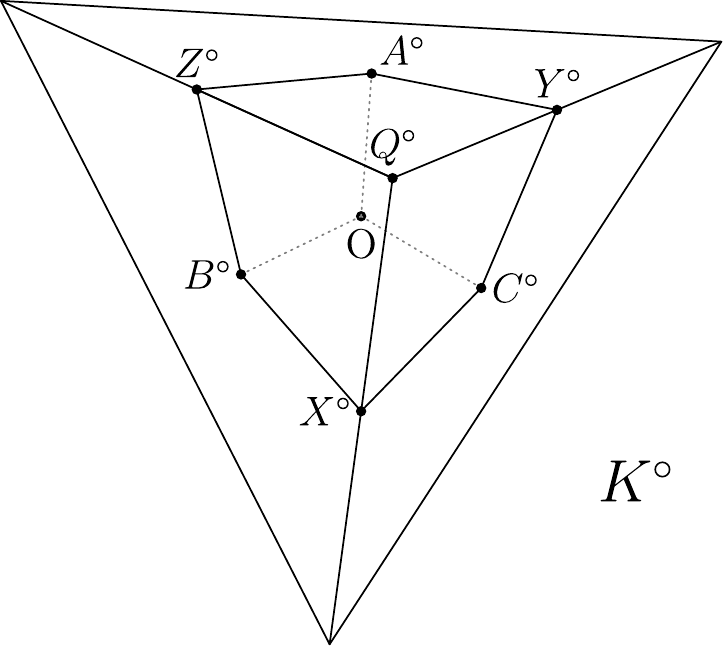}
\end{center}
Since $Z^\circ \in \pi_H(K^\circ)$, there exists $s \in \R$ such that 
$\vz^\circ + s \va \times \vb \in K^\circ$.
We note that $\vz^\circ \parallel (\va^\circ+\vb^\circ)/2=(\va+\vb)/2$.
Let $R$ be the rotation through the angle $\pi$ about the axis through $o$ and the midpoint of $A$ and $B$.
Then, we see that $R \in T$ and
\begin{equation*}
 R(\vz^\circ + s \va \times \vb) = \vz^\circ - s \va \times \vb \in R(K^\circ) = K^\circ.
\end{equation*}
Thus, the convexity of $K^\circ$ implies that $Z^\circ \in K^\circ$. 
Moreover, since $o, A^\circ, B^\circ, Z^\circ \in \pos(A,B)$, we get
\begin{equation*}
L^\circ = \conv \left\{o, A^\circ, Z^\circ, B^\circ\right\} \subset K^\circ \cap \pos(A,B)
\subset \pi_H(K^\circ) \cap \pos(A,B) = L^\circ,
\end{equation*}
so that $L^\circ=K^\circ \cap \pos(A,B)$.
It follows from $A=A^\circ$ and $B=B^\circ$ that
\begin{equation}
\label{eq:8}
L^\circ=o*\mathcal{C}_{K^\circ}(A^\circ, B^\circ), \quad
\overline{\mathcal{C}_{K^\circ}(A^\circ, B^\circ)} = |L^\circ| \frac{\va \times \vb}{|\va \times \vb|}.
\end{equation}
Putting 
$Y^\circ:=R_D Z^\circ$ and $X^\circ:=R_D^2 Z^\circ$, we get
\begin{equation*}
o*\mathcal{C}_{K^\circ}(C^\circ,A^\circ) = \conv\{o, C^\circ, Y^\circ, A^\circ\}, \quad
o*\mathcal{C}_{K^\circ}(B^\circ,C^\circ) = \conv\{o, B^\circ, X^\circ, C^\circ\}.
\end{equation*}
As for $L$, since $L=\conv \{o, A, B\}=K \cap \pos(A,B)$, we have
\begin{equation*}
L=o*\mathcal{C}_K(A,B), \quad
\overline{\mathcal{C}_K(A,B)} = |L| \frac{\va \times \vb}{|\va \times \vb|}.
\end{equation*}

Now we consider the truncated convex cone $o*\mathcal{S}_K(A,B,C)$.
By the $T$-symmetry, we have
\begin{equation*}
\begin{aligned}
&\overline{\mathcal{C}_K(A,B)}+\overline{\mathcal{C}_K(B,C)}+\overline{\mathcal{C}_K(C,A)}
=
(E+R_D+R_D^2)\overline{\mathcal{C}_K(A,B)} \\
&=
|L| (E+R_D+R_D^2) \frac{\va \times \vb}{|\va \times \vb|}
=\sqrt{2} |L|
\begin{pmatrix}
 1 \\ 1 \\ -1
\end{pmatrix}.
\end{aligned}
\end{equation*}
Moreover, Lemma \ref{lem:5} asserts that
\begin{equation*}
\frac{\sqrt{2} |L| \vx}{3} \cdot
\begin{pmatrix}
 1 \\ 1 \\ -1
\end{pmatrix}
=
\frac{\vx}{3}
 \cdot 
(\overline{\mathcal{C}_K(A,B)}+\overline{\mathcal{C}_K(B,C)}+\overline{\mathcal{C}_K(C,A)})
\leq |o* \mathcal{S}_K(A,B,C)| = \frac{|K|}{4}
\end{equation*}
for any $\vx \in K$. Hence,
\begin{equation*}
\vq^\circ:=
\frac{4\sqrt{2} |L|}{3|K|}
\begin{pmatrix}
 1 \\ 1 \\ -1
\end{pmatrix} \in K^\circ.
\end{equation*}
We put $\hat{K}^\circ:=o*\mathcal{S}_{K^\circ}(A^\circ,B^\circ,C^\circ)$, then it is easy to check that $\vq^\circ \in \hat{K}^\circ$.
By \eqref{eq:8},
we can calculate the volume of the solid
$Q^\circ*(o* \mathcal{C}_{K^\circ}(A^\circ,B^\circ,C^\circ,A^\circ)) \subset \hat{K}^\circ$ as 
\begin{equation*}
\begin{aligned}
&
\frac{1}{3} \vq^\circ \cdot \left(
\overline{\mathcal{C}_{K^\circ}(A^\circ,B^\circ)}
+
\overline{\mathcal{C}_{K^\circ}(B^\circ,C^\circ)}
+
\overline{\mathcal{C}_{K^\circ}(C^\circ,A^\circ)}
\right)
=
\frac{1}{3} \vq^\circ \cdot
(E+R_D+R_D^2) |L^\circ| \frac{\va \times \vb}{|\va \times \vb|}
\\
&
=
\frac{4\sqrt{2} |L|}{9|K|}
\begin{pmatrix}
 1 \\ 1 \\ -1
\end{pmatrix}
\cdot
\sqrt{2}{|L^\circ|}
\begin{pmatrix}
 1 \\ 1 \\ -1
\end{pmatrix}
=
\frac{16}{9 |K|} = \frac{|K^\circ|}{4},
\end{aligned}
\end{equation*}
which is exactly the volume of $\hat{K}^\circ$.
Hence, we can apply Lemma \ref{lem:5} with equality to get
$\hat{K}^\circ=Q^\circ*(o* \mathcal{C}_{K^\circ}(A^\circ,B^\circ,C^\circ,A^\circ))$.
By the convexity, the segment $Q^\circ A^\circ$ is on the boundary of $\hat{K}^\circ$ and that of $K^\circ$.
Here, the endpoints of the three vectors
\begin{equation*}
\va^\circ=
\frac{1}{\sqrt{3}}
\begin{pmatrix}
1 \\ 1 \\ 1
\end{pmatrix}, \quad
\vy^\circ=
\frac{1}{\sqrt{3}}
\begin{pmatrix}
0 \\ 3 \\ 0
\end{pmatrix}, \quad
\vz^\circ=
\frac{1}{\sqrt{3}}
\begin{pmatrix}
3 \\ 0 \\ 0
\end{pmatrix}
\end{equation*}
are on the plane defined by $(x+y+z)/\sqrt{3}=1$.
Thus, putting $\vq^\circ=s \, {}^t (1, 1, -1)$ $(s>0)$, we have
$\va^\circ \cdot(\vq^\circ-\va^\circ) \geq 0$, i.e.,
\begin{equation*}
\frac{1}{\sqrt{3}}
\begin{pmatrix}
 1 \\ 1 \\ 1
\end{pmatrix}
\cdot \vq^\circ
=
\frac{s}{\sqrt{3}} \geq 1,
\end{equation*}
so that $s \geq \sqrt{3}$.
On the other hand, $\va \cdot \vq^\circ =s/\sqrt{3} \leq 1$ holds, which yields that $s=\sqrt{3}$ and $\vq^\circ=\sqrt{3} \,{}^t(1, 1, -1)$.
Hence, $\vq^\circ \cdot \va=\vq^\circ \cdot \vb=\vq^\circ \cdot \vc=1$.
By the definition of the polar, we have $K \subset \{\vx \in \R^3; \vx \cdot \vq^\circ \leq 1\}$.
Thus, we obtain
\begin{equation*}
\conv\{o,A,B,C\} \subset K \cap \pos(A,B,C) \subset \{\vx; \vx \cdot \vq^\circ \leq 1\} \cap \pos(A,B,C)
= \conv\{o,A,B,C\},
\end{equation*}
so that $K \cap \pos (A,B,C) = \conv \{o,A,B,C\}$.
By the $T$-symmetry of $K$, we have $K=\conv\{A,B,C,D\}$, which means that $K$ is a dilation of $\triangle$.

Finally, we consider Case (i).
It is clear that $L=o*\mathcal{C}_K(A,B)$.
As for $L^\circ$,
we have $L^\circ=\conv\{o,A^\circ, B^\circ\}$ and $o, A^\circ(=A), B^\circ(=B) \in L^\circ$.
It then follows from the definition of $L^\circ$ that
\begin{equation*}
L^\circ \subset K^\circ \cap \pos(A,B) \subset \pi_H(K^\circ) \cap \pos(A,B) = L^\circ,
\end{equation*}
so that $K^\circ \cap \pos(A,B) = L^\circ$, hence \eqref{eq:8} holds.
Then, similarly as Case (ii), Lemma \ref{lem:5} asserts that
\begin{equation*}
\vq:=
\frac{4\sqrt{2} |L^\circ|}{3|K^\circ|}
\begin{pmatrix}
 1 \\ 1 \\ -1
\end{pmatrix} \in K.
\end{equation*}
After that, by the same argument exchanging the roles of $K$ and $K^\circ$ in Case (ii), we get
$K^\circ=\conv\{A^\circ,B^\circ,C^\circ,D^\circ\}$.
\end{proof}

\subsection{The cases $G=O,I$}

One can show the following similarly as Proposition \ref{prop:8}, thus we omit their proofs.

\begin{proposition}
\label{prop:9}
\begin{enumerate}[\upshape (i)]
 \item 
Let $K \in \mathcal{K}^3(O)$. If $\mathcal{P}(K)=\mathcal{P}(\Diamond)$ holds,
then $K$ is a dilation of $\Diamond$ or $\Diamond^\circ$.
 \item 
Let $K \in \mathcal{K}^3(I)$. If $\mathcal{P}(K)=\mathcal{P}(\Pentagon)$ holds,
then $K$ is a dilation of $\Pentagon$ or $\Pentagon^\circ$.
\end{enumerate}
\end{proposition}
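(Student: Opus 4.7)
The plan is to mimic the proof of Proposition~\ref{prop:8} verbatim; I describe part (i), the octahedral case, in detail, since part (ii) (the icosahedral case) differs only by which regular polytope and which explicit constants appear. Let $K\in\mathcal K^3(O)$ achieve $\mathcal P(K)=32/3$. After a dilation, I would normalize so that the three octahedron vertices $A,B,C$ from the proof of Proposition~\ref{prop:6} lie on $\partial K$, and by Proposition~\ref{prop:sch} select an approximating sequence $K_m\in\checkK^3(O)$ passing through the same three points. Each link in the inequality chain of Proposition~\ref{prop:6} must then saturate in the Hausdorff limit, so with $H=\Span\{\va,\vb\}$, $L=K\cap\pos(A,B)$, and $L^\circ=\pi_H(K^\circ)\cap\pos(A,B)$ one has $|L|\,|L^\circ|=1/2$. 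By the equality case of Proposition~\ref{prop:2} (applied with $\xi=\pi/2$), the pair $(L,L^\circ)$ falls into one of two configurations: Case~A where $L=\conv\{o,A,Z,B\}$ and $L^\circ=\conv\{o,A,B\}$, or Case~B where $L=\conv\{o,A,B\}$ and $L^\circ=\conv\{o,A,Z^\circ,B\}$; in either case $\vz=\vz^\circ=\va+\vb$.

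In Case~B, two symmetries in $O$ do the work. The $\pi$-rotation about the axis through $o$ and the midpoint of edge $AB$ fixes $\vz^\circ$ and reverses the normal $\va\times\vb=\vc$; combined with convexity this upgrades $\vz^\circ\in\pi_H(K^\circ)$ to $\vz^\circ\in K^\circ$, so $L^\circ=K^\circ\cap\pos(A,B)$. The 3-fold rotation about $(1,1,1)/\sqrt 3$ cycles $A\to B\to C$, so $\overline{\mathcal C_K(A,B)}+\overline{\mathcal C_K(B,C)}+\overline{\mathcal C_K(C,A)}$ is a scalar multiple of ${}^t(1,1,1)$. Applying Lemma~\ref{lem:5} to the face $\mathcal S_K(A,B,C)$ (whose cone has volume $|K|/8$) produces a test vector $\vq^\circ\in K^\circ$ of the form $c\,{}^t(1,1,1)$. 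Computing the volume of $Q^\circ*(o*\mathcal C_{K^\circ}(A^\circ,B^\circ,C^\circ,A^\circ))$ and matching it against $|K^\circ|/8$ triggers the equality case of Lemma~\ref{lem:5} in the dual, forcing $\mathcal S_{K^\circ}(A^\circ,B^\circ,C^\circ)$ to be flat with $Q^\circ$ lying inside; combining $\va\cdot\vq^\circ\le 1$ with the lower bound $\va^\circ\cdot(\vq^\circ-\va^\circ)\ge 0$ pins $\vq^\circ=\va+\vb+\vc$. Then $K\cap\pos(A,B,C)\subset\{\vx:\vx\cdot\vq^\circ\le 1\}\cap\pos(A,B,C)=\conv\{o,A,B,C\}$, and the $O$-symmetry finishes $K=\Diamond$. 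Case~A is handled by exchanging the roles of $K$ and $K^\circ$ throughout, giving $K=\Diamond^\circ$.

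For part (ii) the same argument runs with $A,B,C$ taken from the proof of Proposition~\ref{prop:7}, angle $\xi=\arccos(1/\sqrt 5)$, and the 2-fold edge-midpoint and 3-fold face-center rotations of $I$ replacing their $O$-counterparts; the test-point analysis identifies $\vq^\circ$ with the dual vertex of the face $ABC$ of $\Pentagon$, producing $K=\Pentagon$ in Case~B and $K=\Pentagon^\circ$ in Case~A.

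The main obstacle is the identification of $\vq^\circ$ with the correct dual vertex. The explicit constants $|K|$, $|L|$, $|K^\circ|$, $|L^\circ|$ appearing in the formula for $\vq^\circ$ must combine precisely with the flat-face conclusion of Lemma~\ref{lem:5} so as to yield $\vq^\circ\cdot\va=\vq^\circ\cdot\vb=\vq^\circ\cdot\vc=1$; for the icosahedral case these constants involve $\phi$ and the arithmetic is heavier, but no new geometric input is required beyond what Proposition~\ref{prop:8} already uses.
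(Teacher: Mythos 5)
Your proposal is correct and coincides with the paper's approach: the paper itself omits these proofs, stating only that they follow ``similarly as Proposition \ref{prop:8}'', which is exactly the adaptation you carry out. Your verification of the case-specific ingredients (the angle $\xi=\pi/2$ giving $|L|\,|L^\circ|=1/2$, the edge-midpoint $\pi$-rotation and the $3$-fold face rotation lying in $O$ resp.\ $I$, the fundamental-domain counts $8$ and $20$, and the identification of $\vq^\circ$ with the dual vertex $\va+\vb+\vc$) is accurate.
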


\subsection{The case $G=D_\ell$}

\begin{proposition}
\label{prop:10}
Assume that $\ell \geq 3$.
Let $K \in \mathcal{K}^3(D_\ell)$.
If $\mathcal{P}(K)=\mathcal{P}(P_\ell)$ holds, then $K$ coincides with $P_\ell$ or $P_\ell^\circ$ up to a linear transformation in $\mathcal{G}'$.
\end{proposition}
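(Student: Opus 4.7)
The plan is to follow the strategy of Proposition 5.1 ($G=T$), adapted to the prismatic setting, where two independent equality constraints now arise from the pairs of special points $(P,A)$ and $(A,B)$. First, I apply a linear transformation in $\mathcal{G}'$ (which commutes with $D_\ell$) to normalize $K$ so that $A, B, P, Q \in \partial K$, and via Proposition~\ref{prop:sch} I choose an approximating sequence $K_m \in \checkK^3(D_\ell)$, rescaled so that the normalization persists in the limit. Continuity of $\mathcal{P}$ together with the equality $\mathcal{P}(K) = \mathcal{P}(P_\ell)$ forces every step of the proof of Proposition~\ref{prop:4} to saturate; in particular, both applications of Proposition~\ref{prop:1} become equalities:
\begin{equation*}
\overline{\mathcal{C}(P,A)} \cdot \overline{\Lambda(\mathcal{C}(P,A))} = \tfrac{1}{2}, \qquad \overline{\mathcal{C}(A,B)} \cdot \overline{\Lambda(\mathcal{C}(A,B))} = \tfrac{1}{2}(1-\cos\xi).
\end{equation*}

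The symmetries $R_\ell \vp = \vp$, $VH\va = \va$, and $\vb = R_\ell\va$ already observed in the proof of Proposition~\ref{prop:4} yield $\Lambda(\vp) = \vp$, $\Lambda(\va) = \va$, $\Lambda(\vb) = \vb$, so Proposition~\ref{prop:2} applies to each pair in the limit. This produces two possible shapes -- triangle or quadrilateral -- for each of the two-dimensional slices $L_{PA} := K \cap \pos(P,A)$ in the $xz$-plane and $L_{AB} := K \cap \pos(A,B)$ in the $xy$-plane. The two ``diagonal'' combinations should correspond to the two asserted minimizers: $L_{PA}$ quadrilateral with $L_{AB}$ triangle gives $K = P_\ell$, while $L_{PA}$ triangle with $L_{AB}$ quadrilateral gives $K = P_\ell^\circ$.

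In each surviving case, the identification of $K$ proceeds along the lines of Proposition~\ref{prop:8}: the equality version of Lemma~\ref{lem:5} applied to $\tilde{K} = o*\mathcal{S}_K(P,A,B)$ (respectively to $\tilde{K}^\circ$) with apex $\vx_0 = \vc_{PA} = (1,0,1)$ (respectively $\vx_0 = \vc_{AB} = (1,\tan(\xi/2),0)$) forces $\tilde{K}$ to equal the cone from $\vx_0$ over $o*\mathcal{C}(P,A,B,P)$. Spreading this structure under the $D_\ell$-action and using the supporting-hyperplane/polar duality as in the $G=T$ case then identifies $K$ as $P_\ell$ or $P_\ell^\circ$ up to a transformation in $\mathcal{G}'$: the $R_\ell$-orbit of $\vx_0 = (1,0,1)$ becomes the top $\ell$-gon of the prism, while the triangle form of $L_{AB}$ certifies that the equatorial edge has unit length, and dually for the bipyramid case.

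The main obstacle will be excluding the two ``mixed'' combinations in which $L_{PA}$ and $L_{AB}$ have the same shape. In the all-quadrilateral case, equality in Lemma~\ref{lem:5} would force a single apex $\vx_0 \in \mathcal{S}_K(P,A,B)$ serving simultaneously as the cone-apex over the corners $\vc_{PA} \in xz$-plane and $\vc_{AB} \in xy$-plane; since these two corners lie in distinct $D_\ell$-orbits, the $D_\ell$-equivariant extension of $\vx_0$ to all of $\partial K$ will conflict with the convexity of $K$. The all-triangle case is excluded by the analogous argument applied to $\tilde{K}^\circ$. Once the mixed cases are ruled out, the two remaining configurations give precisely the equality characterization asserted.
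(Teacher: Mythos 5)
Your overall framework---normalizing by $\mathcal{G}'$, approximating by $K_m\in\checkK^3(D_\ell)$, passing to the limit to obtain equality in Proposition~\ref{prop:2} for both slices $L:=K\cap\pos(P,A)$ and $M:=K\cap\pos(A,B)$, and thus a four-way case split---is the paper's. The gap is your plan for the ``main obstacle'': the two same-shape combinations cannot be excluded, because each of them is realized by a genuine minimizer. The bipyramid $\conv\bigl(\{(0,0,\pm1)\}\cup\{(\cos k\xi,\sin k\xi,0);\,k=0,\dots,\ell-1\}\bigr)$ is $D_\ell$-invariant, attains $\mathcal{P}=\mathcal{P}(P_\ell)$ (it is the image of $P_\ell^\circ$ under a rotation by $\xi/2$ about the $z$-axis followed by a dilation of the $xy$-plane, both of which preserve $D_\ell$-invariance and the volume product), and has \emph{both} slices equal to triangles, $L=\conv\{o,P,A\}$ and $M=\conv\{o,A,B\}$; dually, the right prism over the polar $\ell$-gon realizes the all-quadrilateral case. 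Accordingly, your proposed contradiction---that a single apex $\vx_0$ in the equality case of Lemma~\ref{lem:5} cannot simultaneously serve two corners in distinct $D_\ell$-orbits without violating convexity---is false: in the all-triangle case the apex exists and is $\vq^\circ=\vd^\circ+{}^t(0,0,1)$, and the cone from $\vq^\circ$ over $o*\mathcal{C}_{K^\circ}(P^\circ,C',A^\circ,D^\circ,B^\circ,R_\ell(C'),P^\circ)$ is exactly one fundamental domain of a convex right prism. (The underlying misconception is that the minimizer class consists only of $P_\ell$ and $P_\ell^\circ$ up to diagonal scalings; it also contains their $\xi/2$-rotations about the $z$-axis.)

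What the paper does instead is treat all four cases: the two ``diagonal'' ones yield $P_\ell$ and $P_\ell^\circ$ directly; the all-quadrilateral case reduces to the all-triangle case by exchanging the roles of $K$ and $K^\circ$; and the all-triangle case is resolved by producing the test point $\vq^\circ\in K^\circ$ from the signed-volume estimate, checking that the signed volume of the cone $\tilde{K}^\circ$ over it equals $|K^\circ|/(2\ell)$, and concluding that the $R_\ell$-translates of $\tilde{K}^\circ$ exhaust $K^\circ$, which identifies $K^\circ$ as a (rotated) right prism. A secondary point you should also address even in the diagonal cases: for $\ell=3$ the projection inclusions $\pi_2(\hat{K}^\circ)\subset L^\circ$ and $\pi_H(\hat{K}^\circ)\subset R_\ell(L^\circ)$ fail, so the identification of $\hat{K}^\circ$ with the inscribed polytope cannot be done by projections alone; the paper circumvents this by comparing the product of the volumes of the inscribed polytopes $\conv\{o,P,C,A,B,R_\ell(C)\}$ and $\conv\{o,P^\circ,A^\circ,D^\circ,B^\circ\}$ directly with $|K|\,|K^\circ|=\tfrac{2\ell^2}{3}(1-\cos\xi)$.
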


\begin{proof}
We use the symbols $\vp$, $\va$, $\vq$, and $\vb$ defined in the proof of Proposition \ref{prop:4}.
Without loss of generalities, we may assume that the minimizer $K$ satisfies $P, A, B \in \partial K$.
By Proposition \ref{prop:sch}, there is a sequence $\{K_m\}_{m \in \N} \subset \checkK^3(D_\ell)$ such that $K_m \rightarrow K$ $(m \rightarrow \infty)$.
Normalizing $K_m$ by a linear transformation in $\mathcal{G}'$, we can assume that
$P, A, B \in \partial K_m$ for every $m$.
With this change, the convergence $K_m \rightarrow K$ $(m \rightarrow \infty)$ still holds.
Let $\pi_2$ and $\pi_3$ be the orthogonal projections to $zx$-plane and $xy$-plane, respectively.
Let $H$ be the plane through $o,P,B$, and $\pi_H$ be the orthogonal projection onto $H$.
For each $K_m$, by the same argument as in the proof of Proposition \ref{prop:8}, we have $P^\circ:=\Lambda_m(P)=P$, $A^\circ:=\Lambda_m(A)=A$, and $B^\circ:=\Lambda_m(B)=B$, where $\Lambda_m:\partial K_m \to \partial K_m^\circ$.
Moreover, we put
\begin{equation*}
\begin{aligned}
L_m
&:=o*\mathcal{C}_m(P,A)
=K_m \cap \pos(P,A), \\
L_m^\circ
&:=o*\pi_2(\Lambda_m(\mathcal{C}_m(P,A)))
=\pi_2(K_m^\circ) \cap \pos(P,A), \\
M_m
&:=o*\mathcal{C}_m(A,B)
=K_m \cap \pos(A,B), \\
M_m^\circ
&:=o*\pi_3(\Lambda_m(\mathcal{C}_m(A,B)))
=\pi_3(K_m^\circ) \cap \pos(A,B).
\end{aligned}
\end{equation*}
As $m \rightarrow \infty$, these two dimensional sets converge to
\begin{equation*}
\begin{aligned}
L
&:=K \cap \pos(P,A), \quad
L^\circ
:=\pi_2(K^\circ) \cap \pos(P,A), \\
M
&:=K \cap \pos(A,B), \quad
M^\circ
:=\pi_3(K^\circ) \cap \pos(A,B),
\end{aligned}
\end{equation*}
respectively, in the Hausdorff distance.
Since the inequality \eqref{eq:4} holds for each $K_m$, by taking the limit $m \to \infty$,
we obtain
\begin{equation*}
|K| \, |K^\circ| \geq \frac{4 \ell^2}{9} \left(
2(1-\cos \xi) |L|\,|L^\circ|
+ |M|\,|M^\circ|
\right).
\end{equation*}
Since $K$ is a minimizer of $\mathcal{P}$, by the inequality in Proposition \ref{prop:2}, we get
\begin{equation*}
|L|\,|L^\circ|=\frac{1}{2}, \quad
|M|\,|M^\circ|=\frac{1-\cos \xi}{2},
\end{equation*}
which implies that Proposition \ref{prop:2} holds with equality for $L$ and also for $M$.
Hence, for the two dimensional subsets  $L$ and $L^\circ$ either of the following two cases happens.
\begin{enumerate}[{Case} (A):]
 \item
$L=\conv \left\{o, P, C, A\right\}$ and
$L^\circ= \conv \left\{o, P^\circ, A^\circ\right\}$, where $\vc=\vp+\va$.
 \item 
$L=\conv \left\{o, P, A\right\}$ and
$L^\circ= \conv \left\{o, P^\circ, C^\circ, A^\circ\right\}$, where $\vc^\circ=\vp^\circ+\va^\circ$.
\end{enumerate}
In addition, $M$ and $M^\circ$ are characterized as one of the following.
\begin{enumerate}[{Case} (a):]
 \item
$M=\conv \left\{o, A, D, B\right\}$ and
$M^\circ= \conv \left\{o, A^\circ, B^\circ\right\}$, where $\vd=(\va+\vb)/(1+\cos \xi)$.
 \item 
$M=\conv \left\{o, A, B\right\}$ and
$M^\circ= \conv \left\{o, A^\circ, D^\circ, B^\circ\right\}$, where $\vd^\circ=(\va^\circ+\vb^\circ)/(1+\cos \xi)$. 
\end{enumerate}

Thus, it suffices to consider only four cases.
Before that, we give some remarks.

\begin{remark}
\label{rem:5}
\begin{enumerate}[\upshape (i)]
 \item 
In Case (B), since $C^\circ \in L^\circ \subset \pi_2(K^\circ)$,
there exists $s \in \R$ such that $\vc^\circ + {}^t(0,s,0) \in K^\circ$.
In the following, we put $\vc':=\vc^\circ + {}^t(0,s,0)$.
 \item 
In Case (b), since $D^\circ \in M^\circ \subset \pi_3(K^\circ)$,
there exists $t \in \R$ such that $\vd^\circ + {}^t(0,0,t) \in K^\circ$.
In addition, we have $R_\ell VH(\vd^\circ + {}^t(0,0,t)) \in K^\circ$, so that
\begin{equation*}
\begin{pmatrix}
1 \\ \frac{\sin \xi}{1+\cos \xi} \\ t 
\end{pmatrix},
\begin{pmatrix}
1 \\ \frac{\sin \xi}{1+\cos \xi} \\ -t 
\end{pmatrix} \in K^\circ,
\text{ and }
 \vd^\circ = 
\begin{pmatrix}
1 \\ \frac{\sin \xi}{1+\cos \xi} \\ 0 
\end{pmatrix} \in K^\circ
\end{equation*}
by the convexity of $K^\circ$.
Hence,
\begin{equation*}
M^\circ=\conv \left\{o, A^\circ, D^\circ, B^\circ\right\} \subset K^\circ \cap \pos(A,B) \subset \pi_3(K^\circ) \cap \pos(A,B)= M^\circ
\end{equation*}
holds, so that $M^\circ =\conv \left\{o, A^\circ, D^\circ, B^\circ\right\}=K^\circ \cap \pos(A,B)$.
\end{enumerate}
\end{remark}

\paragraph{Case (A) and (a):} 
Since $P,A \in K^\circ$ and $L^\circ= \conv \left\{o, P^\circ, A^\circ\right\}$, by the convexity of $K^\circ$, we have
\begin{equation}
\label{eq:g}
L^\circ= \conv \left\{o, P^\circ, A^\circ\right\} \subset K^\circ \cap \pos(P,A)
\subset \pi_2(K^\circ) \cap \pos(P,A)= L^\circ,
\end{equation}
that is,
$L^\circ=\pi_2(K^\circ) \cap \pos(P,A)=K^\circ \cap \pos(P,A)$.
Similarly, $M^\circ=K^\circ \cap \pos(A,B)$.
By exchanging the roles of $K$ and $K^\circ$, this case is reduced to Case (B) and (b) below.

\paragraph{Case (A) and (b):}
We put $\hat{K}^\circ:=o*\mathcal{S}_{K^\circ}(P^\circ,A^\circ,B^\circ) = K^\circ \cap \pos(P,A,B)$.
Since $L^\circ=K^\circ \cap \pos(P,A)$ from \eqref{eq:g} and $M^\circ=\conv \left\{o, A^\circ, D^\circ, B^\circ\right\}$ from Remark \ref{rem:5}\,(ii),
$\conv\{o, P^\circ, A^\circ, D^\circ, B^\circ\} \subset \hat{K}^\circ$ holds.
On the other hand, we have $\pi_2(\hat{K}^\circ) \subset L^\circ$
and 
$\pi_H(\hat{K}^\circ) \subset R_\ell(L^\circ)$ for $\ell \geq 4$, which mean that
$\hat{K}^\circ \subset \conv\{o, P^\circ, A^\circ, D^\circ, B^\circ\}$ for $\ell \geq 4$.
However, $\pi_2(\hat{K}^\circ) \subset L^\circ$ and $\pi_H(\hat{K}^\circ) \subset R_\ell(L^\circ)$ do not hold for $\ell=3$.
Therefore, we give another proof of $\hat{K}^\circ=\conv\{o, P^\circ, A^\circ, D^\circ, B^\circ\}$
which is applicable for all $\ell \geq 3$.

Let us also consider $\hat{K}:=o*\mathcal{S}_K(P, A, B) = K \cap \pos(P,A,B)$.
In the same way as $\hat{K}^\circ$, we have $\conv\{o, P, C, A, B, R_{\ell}(C)\} \subset \hat{K}$.
Then, it follows from the $D_\ell$-symmetries of $K$ and $K^\circ$ that
\begin{equation}
\label{eq:f}
2\ell |\conv\{o, P, C, A, B, R_{\ell}(C)\}|\ 2\ell |\conv\{o, P^\circ, A^\circ, D^\circ, B^\circ\}|
\leq |K|\,|K^\circ|=\frac{2\ell^2}{3}(1-\cos\xi).
\end{equation}
On the other hand, by an easy calculation, we have $|\conv\{o, P, C, A, B, R_{\ell}(C)\}|=(\sin\xi)/2$
and $|\conv\{o, P^\circ, A^\circ, D^\circ, B^\circ\}|=(\sin\xi)/3(1+\cos\xi)$.
Hence, the inequality \eqref{eq:f}, in fact, holds with equality.
This means that $\hat{K}^\circ=\conv\{o, P^\circ, A^\circ, D^\circ, B^\circ\}$.

Again, by the $D_\ell$-symmetry, $K^\circ$ is nothing but a regular $\ell$-bipyramid
such that the cross section of $K^\circ$ with the $xy$-plane is the regular $\ell$-gon with vertices $(R_\ell)^j(D^\circ)$ $(j=0,\dots,\ell-1)$.

\paragraph{Case (B) and (a):}
We put $\hat{K}^\circ:=o*\mathcal{S}_{K^\circ}(P^\circ,A^\circ,B^\circ) = K^\circ \cap \pos(P,A,B)$.
Then, in a similar way as the above case,
$\conv\{o, A^\circ, B^\circ, P^\circ, C^\circ, R_\ell(C^\circ)\} \subset \hat{K}^\circ$ holds.
Again, by a similar argument in the above case, we obtain
$\hat{K}^\circ=\conv\{o, A^\circ, B^\circ, P^\circ, C^\circ, R_\ell(C^\circ)\}$.
By the $D_\ell$-symmetry, $K^\circ$ coincides with an $\ell$-regular right prism.

\paragraph{Case (B) and (b):}
We put $\hat{K}^\circ:=o*\mathcal{S}_{K^\circ}(P^\circ, C', A^\circ, D^\circ, B^\circ, R_\ell(C'))$.
For each convex body $K_m \in \checkK^3(D_\ell)$, we have
\begin{equation*}
\begin{aligned}
 \frac{2{\ell}}{3 |K_m|} \left(
 \overline{\mathcal{C}_m(P,A)}
+\overline{\mathcal{C}_m(A,B)}
+\overline{\mathcal{C}_m(B,P)}
\right)
&=
\frac{2{\ell}}{3 |K_m|}
\left(
|L_m|
\begin{pmatrix}
 \sin \xi \\ 1-\cos \xi \\ 0
\end{pmatrix}
+
|M_m|
\begin{pmatrix}
0 \\ 0 \\ 1
\end{pmatrix}
\right)
\in K_m^\circ.
\end{aligned}
\end{equation*}
As $m \rightarrow \infty$, we obtain
\begin{equation*}
\vq^\circ:=
\frac{2{\ell}}{3 |K|}
\left(
|L|
\begin{pmatrix}
 \sin \xi \\ 1-\cos \xi \\ 0
\end{pmatrix}
+
|M|
\begin{pmatrix}
0 \\ 0 \\ 1
\end{pmatrix}
\right)
\in K^\circ.
\end{equation*}
Now, we calculate the (signed) volume of the solid
\begin{equation*}
\tilde{K}^\circ:=Q^\circ*(o*\mathcal{C}_{K^\circ}(P^\circ, C', A^\circ, D^\circ, B^\circ, R_\ell(C'), P^\circ)) \subset K^\circ.
\end{equation*}
We put $L_1^\circ:=o*\mathcal{C}_{K^\circ}(P^\circ,C')$ and $L_2^\circ:=o*\mathcal{C}_{K^\circ}(C',A^\circ)$.
Then we have
\begin{equation*}
\begin{aligned}
\overline{\mathcal{C}_{K^\circ}(P^\circ,C')}&=
\frac{|L_1^\circ|}{\sqrt{1+s^2}}
\begin{pmatrix}
-s \\ 1 \\ 0
\end{pmatrix},  &
\overline{\mathcal{C}_{K^\circ}(R_\ell(C'), P^\circ)}
&=\frac{|L_1^\circ|}{\sqrt{1+s^2}}
\begin{pmatrix}
s \cos \xi+\sin \xi \\ s \sin \xi-\cos \xi \\ 0
\end{pmatrix}, \\
\overline{\mathcal{C}_{K^\circ}(C',A^\circ)}&=
\frac{|L_2^\circ|}{\sqrt{1+s^2}}
\begin{pmatrix}
0 \\ 1 \\ -s
\end{pmatrix}, &
\overline{\mathcal{C}_{K^\circ}(B^\circ, R_\ell(C'))}&=
\frac{|L_2^\circ|}{\sqrt{1+s^2}}
\begin{pmatrix}
\sin \xi \\ -\cos \xi \\ s
\end{pmatrix}, \\
\overline{\mathcal{C}_{K^\circ}(A^\circ,B^\circ)}&=
|M^\circ|
\begin{pmatrix}
0 \\ 0 \\ 1
\end{pmatrix}, &
|L^\circ|&= \frac{|L_1^\circ|+|L_2^\circ|}{\sqrt{1+s^2}}.
\end{aligned}
\end{equation*}
By a direct calculation, the signed volume of $\tilde{K}^\circ$ equals
\begin{equation*}
\begin{aligned}
& \frac{\vq^\circ}{3} \cdot \left(
|L^\circ|
\begin{pmatrix}
 \sin \xi \\ 1-\cos \xi \\ 0
\end{pmatrix}
+
|M^\circ|
\begin{pmatrix}
 0 \\ 0 \\ 1
\end{pmatrix}
+
\frac{s |L_1^\circ|}{\sqrt{1+s^2}}
\begin{pmatrix}
-(1-\cos \xi) \\ \sin \xi \\ 0
\end{pmatrix}
\right) \\
&=
\frac{2{\ell}}{9 |K|}\left(
2(1-\cos \xi) |L|\,|L^\circ| +|M|\,|M^\circ|
\right)
=\frac{\ell}{3|K|}(1-\cos \xi)=\frac{|K^\circ|}{2{\ell}}.
\end{aligned}
\end{equation*}
On the other hand, by the $D_\ell$-symmetry, we have $R_\ell^{k} (\tilde{K}^\circ) \subset K^\circ$ $(0 \leq k \leq \ell-1)$
and the $3$-dimensional Lebesgue measure of $R_\ell^{k} (\tilde{K}^\circ) \cap R_\ell^{k'} (\tilde{K}^\circ)$ is zero for $0 \leq k \neq k' \leq \ell-1$.
Combining the fact that the signed volume of $\tilde{K}^\circ$ is $|K^\circ|/2\ell$ as calculated above,
the sum of the signed volume of the subsets
$\tilde{K}^\circ$, $R_\ell (\tilde{K}^\circ), \dots, R_\ell^{\ell-1} (\tilde{K}^\circ)$ of $K^\circ$
is nothing but the volume of $K^\circ$.
Consequently, we obtain
\begin{equation*}
\tilde{K}^\circ \cup R_\ell (\tilde{K}^\circ) \cup \dots \cup R_\ell^{\ell-1} (\tilde{K}^\circ) = K^\circ,
\end{equation*}
which implies that $\vq^\circ \in \partial K^\circ$.
Thus, the segment $Q^\circ A^\circ$ is on the boundary $\partial K^\circ$.
Since the plane $\{ x=1\}$ contains the points $C^\circ$, $A^\circ$, and $D^\circ$,
the first coordinate of $\vq^\circ$ is greater than or equal to $1$.
On the other hand, since $\va \cdot \vq^\circ \leq 1$,
the first coordinate of $\vq^\circ$ equals $1$.
Since $|L|=1/2$, $|M|=(\sin \xi)/2$, and $|K|=(\ell \sin \xi)/3$, we obtain
\begin{equation*}
\vq^\circ = \frac{\ell}{3 |K|}
\begin{pmatrix}
\sin \xi \\ 1-\cos \xi \\ \sin \xi
\end{pmatrix}
= 
\begin{pmatrix}
1 \\ \frac{1-\cos \xi}{\sin \xi} \\ 1
\end{pmatrix}
=\vd^\circ + 
\begin{pmatrix}
 0 \\ 0 \\ 1
\end{pmatrix}. 
\end{equation*}
By the $D_\ell$-symmetry, $K^\circ$ is the $\ell$-regular right prism with vertices $(R_\ell)^k(Q^\circ)$, $(R_\ell)^k VH(Q^\circ)$ $(1 \leq k \leq \ell)$.
\end{proof}

\subsection{The Case $G=C_{\ell h}$}

\begin{proposition}
Assume that $\ell \geq 3$.
Let $K \in \mathcal{K}^3(C_{\ell h})$. If $\mathcal{P}(K)=\mathcal{P}(P_\ell)$,
then $K$ coincides with $P_\ell$ or $P_\ell^\circ$ up to a linear transformation in $\mathcal{G}$.
\end{proposition}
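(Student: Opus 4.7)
The strategy adapts the proof of Proposition \ref{prop:10} for $D_\ell$, with an additional normalization step that exploits the extra rotational freedom in $\mathcal{G}$ compared with $\mathcal{G}'$. The weaker symmetry of $C_{\ell h}$ (it lacks the rotation $VH$) prevents the identification $\Lambda_K(\va) = \va$ that was automatic in the $D_\ell$ case; this obstruction is absorbed by using the $z$-axis rotation in $\mathcal{G}$ to first normalize the equator.

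Suppose $K \in \mathcal{K}^3(C_{\ell h})$ satisfies $\mathcal{P}(K) = \mathcal{P}(P_\ell)$. Inspecting the proof of Proposition \ref{prop:3}, equality forces in particular $\overline{\mathcal{C}(A, B)} \cdot \overline{\Lambda(\mathcal{C}(A, B))} = (1-\cos\xi)/2$, and summing over the $\ell$ rotational sectors yields $|K_1| \cdot |K_1^\circ| = \mathcal{P}(\Pentagon_\ell)$ for the equator $K_1 := K \cap \{z=0\} \in \mathcal{K}^2(C_\ell)$. The two-dimensional result for $C_\ell$-invariant bodies then gives that $K_1$ is a dilation or rotation of $\Pentagon_\ell$; applying a suitable element of $\mathcal{G}$ (rotation about the $z$-axis together with dilations in $xy$ and $z$), I normalize $K$ so that $K_1 = \Pentagon_\ell$ (standard, with a vertex at $A = (1,0,0)$) and $P = (0,0,1) \in \partial K$.

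I then repeat the case analysis of Proposition \ref{prop:10} for this normalized $K$. Approximating by $K_m \in \checkK^3(C_{\ell h})$ and passing to the limit extracts $|L| \cdot |L^\circ| = 1/2$ in the $zx$-plane and $|M| \cdot |M^\circ| = (1-\cos\xi)/2$ in the $xy$-plane, with $L, M$ as in Proposition \ref{prop:10}. In the $zx$-plane, $R_\ell \vp = \vp$ and $H\va = \va$ give $\Lambda_K(\vp) = \vp$ and $\pi_2(\Lambda_K(\va)) = \va$, so Proposition \ref{prop:2} applies directly, producing the two subcases (A) and (B) identical to Proposition \ref{prop:10}. In the $xy$-plane, $K_1 = \Pentagon_\ell$ pins down $M = \conv\{o, A, B\}$; a direct computation gives $M^\circ = \conv\{o, A^\circ, E_0, B^\circ\}$, where $A^\circ := \Lambda_K(A) = (1, t, 0)$ for some $t \in [-\tan(\xi/2), \tan(\xi/2)]$ and $E_0 := (1, \tan(\xi/2), 0)$ is the neighbouring vertex of $\Pentagon_\ell^\circ$, with $|M^\circ| = \tan(\xi/2)$ independent of $t$. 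This is the $C_{\ell h}$-analog of Case (b) of Proposition \ref{prop:10}.

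In each of the two combined cases (A)-(b) and (B)-(b), the signed-volume test-point argument from Proposition \ref{prop:10} carries through with the parameter $t$ flowing through the coordinates, and the volume identity for the fundamental domain forces $K$ or $K^\circ$ to coincide with an $\ell$-regular right prism whose cross-section may be rotated about the $z$-axis by an angle determined by $t$; since such a rotation already belongs to $\mathcal{G}$, we conclude that $K$ coincides with $P_\ell$ or $P_\ell^\circ$ up to a transformation in $\mathcal{G}$. The principal technical hurdle is to verify that the test-point identifications of Proposition \ref{prop:10} still close correctly when $A^\circ \neq A$, i.e.\ that the $t$-dependent terms in the fundamental-domain volume balance precisely correspond to an additional $\mathcal{G}$-rotation of the final prism or bipyramid.
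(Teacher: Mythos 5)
Your first reduction is correct and is a genuinely different (and attractive) way to start: equality does force $\overline{\mathcal{C}(A,B)}\cdot\overline{\Lambda(\mathcal{C}(A,B))}=(1-\cos\xi)/2$ in each sector, the equator $K\cap\{z=0\}$ is a $C_\ell$-invariant planar body whose polar in that plane is $\pi_3(K^\circ)$, its volume product is therefore $\ell^2(1-\cos\xi)/2=\mathcal{P}(\Pentagon_\ell)$, and the two-dimensional theorem identifies it as a rotated dilate of the regular $\ell$-gon. The difficulty is how you spend the single rotational degree of freedom in $\mathcal{G}$. You spend it on making the equator standard, which leaves $\Lambda(\va)=(1,t,0)$ with $t$ undetermined; the paper spends it the other way, rotating the approximating bodies $K_m\in\checkK^3(C_{\ell h})$ by angles $\theta_m$ chosen so that $\Lambda_m(\va)=\va$, passing to a subsequence $\theta_m\to\theta$, and then running the proof of Proposition \ref{prop:10} verbatim with $H$ in place of $VH$. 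The two normalizations cannot in general be imposed simultaneously, so in your setup the parameter $t$ cannot be rotated away before the main computation.

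That main computation is exactly where your argument stops, and it is the crux, not a routine verification. Knowing $L,L^\circ,M,M^\circ$ sector by sector does not determine $K$: in your case (B)--(b), for instance, one only obtains a sandwich between the bipyramid $\conv(\{\pm P\}\cup(K\cap\{z=0\}))$ and the polar of the prism $\conv\{R_\ell^j(1,s,\pm1)\}$, and the ratio of their volumes is $1/((1+s^2)\cos^2(\xi/2))$, which is $>1$ unless $s=\pm\tan(\xi/2)$; so the inclusion alone does not pin down $K$. What closes the argument in Proposition \ref{prop:10} is the test point $\vq^\circ\in K^\circ$ from Lemma \ref{lem:5} together with the explicit signed-volume identity for the cone over the path $P^\circ,C',A^\circ,D^\circ,B^\circ,R_\ell(C')$, which shows the fundamental domain of $K^\circ$ is exactly that cone. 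That identity depends on all cross terms vanishing, and in particular on $\overline{\mathcal{C}_{K^\circ}(C',A^\circ)}$ being proportional to ${}^t(0,1,-s)$, which comes from $\vc'\times\va^\circ$ with $\va^\circ=(1,0,0)$. With $\va^\circ=(1,t,0)$ this cross product becomes ${}^t(-t,1,t-s)$, the decomposition of the boundary path changes, and it is not shown that the $t$-dependent terms cancel or assemble into a $\mathcal{G}$-rotation. You flag this yourself as ``the principal technical hurdle'' but do not resolve it, so the proof is incomplete at its central step. The repair is the paper's: normalize $\Lambda_m(\va)=\va$ by the $z$-axis rotation (rather than normalizing the equator), after which no new computation is required.
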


\begin{proof}
By a similar argument in the proof of Proposition \ref{prop:10},
there exists a sequence $\{K_m\}_{m \in \N} \subset \checkK^3(C_{\ell h})$ such that $K_m \rightarrow K$ $(m \rightarrow \infty)$ with $P, A, B \in \partial K_m$.
For each $K_m$, we have $P^\circ:=\Lambda_m(P)=P$.
However, in this case, we note that $A^\circ=A$ may not hold, where $A^\circ:=\lim_{m \to \infty}\Lambda_m(A)$.
Since $H \va=\va$, we have $H \Lambda_m(\va)=\Lambda_m(\va)$. By $\va \cdot \Lambda_m(\va)=1$, we can represent $\Lambda_m(\va)$ as
\begin{equation*}
 \Lambda_m(\va)=
\begin{pmatrix}
1 \\  a_m \\ 0
\end{pmatrix}
\quad (a_m \in \R).
\end{equation*}
Here, let $\mathcal{R}_\theta$ be the rotation through the angle $\theta$ about the $z$-axis.
Then, for each $m \in \N$ there exists $\theta_m \in [0, 2 \pi)$ such that
\begin{equation*}
 \Lambda_{\mathcal{R}_{\theta_m} K_m}(\va)=
\begin{pmatrix}
1 \\  0 \\ 0
\end{pmatrix}.
\end{equation*}
Taking a subsequence if necessary, we can assume 
$\theta_m \rightarrow \theta$ $(m \rightarrow \infty)$ for some $\theta \in \R$.
Using $\mathcal{R}_{\theta_m} K_m$ and $\mathcal{R}_\theta K$ instead of $K_m$ and $K$,
respectively, we may assume that $\Lambda_m(\va)=\va$.
Thus, we can get the conclusion in a similar way to the proof of Proposition \ref{prop:10}.
\end{proof}

\paragraph{Acknowledgment}

{\small
The first author was supported by JSPS KAKENHI Grant Numbers JP16K05120, JP20K03576.
The second author was supported by JSPS KAKENHI Grant Number JP18K03356.}

\begin{bibdiv}
\begin{biblist}
\bib{BF}{article}{
   author={Barthe, F.},
   author={Fradelizi, M.},
   title={The volume product of convex bodies with many hyperplane
   symmetries},
   journal={Amer. J. Math.},
   volume={135},
   date={2013},
   number={2},
   pages={311--347},
   doi={10.1353/ajm.2013.0018},
}
\bib{BMMR}{article}{
   author={B\"{o}r\"{o}czky, K. J.},
   author={Makai, E., Jr.},
   author={Meyer, M.},
   author={Reisner, S.},
   title={On the volume product of planar polar convex bodies---lower
   estimates with stability},
   journal={Studia Sci. Math. Hungar.},
   volume={50},
   date={2013},
   number={2},
   pages={159--198},
   doi={10.1556/SScMath.50.2013.2.1235},
}
\bib{CS}{book}{
   author={Conway, John H.},
   author={Smith, Derek A.},
   title={On quaternions and octonions: their geometry, arithmetic, and
   symmetry},
   publisher={A K Peters, Ltd., Natick, MA},
   date={2003},
   pages={xii+159},
}
\bib{Cox}{book}{
    label={Cox},
    author={Coxeter, H. S. M.},
    title={Regular polytopes},
    series={Second edition},
    publisher={The Macmillan Co., New York; Collier-Macmillan Ltd., London},
    date={1963},
    pages={xx+321 pp. (8 plates)},
}
\bib{FHMRZ}{article}{
   label={FHMRZ},
   author = {Matthieu Fradelizi},
   author = {Alfredo Hubard},
   author = {Mathieu Meyer},
   author = {Edgardo Rold\'an-Pensado},
   author = {Artem Zvavitch},
   title = {Equipartitions and Mahler volumes of symmetric convex bodies},
   eprint = {https://arxiv.org/abs/1904.10765},
   status = {preprint},
}
\bib{FMZ}{article}{
   author={Fradelizi, Matthieu},
   author={Meyer, Mathieu},
   author={Zvavitch, Artem},
   title={An application of shadow systems to Mahler's conjecture},
   journal={Discrete Comput. Geom.},
   volume={48},
   date={2012},
   number={3},
   pages={721--734},
   doi={10.1007/s00454-012-9435-3},
}
\bib{IS}{article}{
   author={Iriyeh, Hiroshi},
   author={Shibata, Masataka},
   title={Symmetric Mahler's conjecture for the volume product in the
   $3$-dimensional case},
   journal={Duke Math. J.},
   volume={169},
   date={2020},
   number={6},
   pages={1077--1134},
   doi={10.1215/00127094-2019-0072},
}
\bib{KR}{article}{
   author={Kim, Jaegil},
   author={Reisner, Shlomo},
   title={Local minimality of the volume-product at the simplex},
   journal={Mathematika},
   volume={57},
   date={2011},
   number={1},
   pages={121--134},
   doi={10.1112/S0025579310001555},
}
\bib{Ma57}{article}{
   label={Ma1},
   author={Mahler, Kurt},
   title={Ein \"Ubertragungsprinzip f\"ur konvexe K\"orper},
   language={German},
   journal={\v Casopis P\v est. Mat. Fys.},
   volume={68},
   date={1939},
   pages={93--102},
}
\bib{Ma59}{article}{
   label={Ma2},
   author={Mahler, Kurt},
   title={Ein Minimalproblem f\"ur konvexe Polygone},
   language={German},
   journal={Mathematica (Zutphen)},
   volume={B 7},
   date={1939},
   pages={118--127},
}
\bib{Me1986}{article}{
   label={Me1},
   author={Meyer, Mathieu},
   title={Une caract\'{e}risation volumique de certains espaces norm\'{e}s de
   dimension finie},
   language={French, with English summary},
   journal={Israel J. Math.},
   volume={55},
   date={1986},
   number={3},
   pages={317--326},
   doi={10.1007/BF02765029},
}
\bib{Me}{article}{
   label={Me2},
   author={Meyer, Mathieu},
   title={Convex bodies with minimal volume product in ${\bf R}^2$},
   journal={Monatsh. Math.},
   volume={112},
   date={1991},
   number={4},
   pages={297--301},
   doi={10.1007/BF01351770},
}
\bib{MP}{article}{
   author={Meyer, Mathieu},
   author={Pajor, Alain},
   title={On the Blaschke-Santal\'o inequality},
   journal={Arch. Math. (Basel)},
   volume={55},
   date={1990},
   number={1},
   pages={82--93},
   doi={10.1007/BF01199119},
}
\bib{MR}{article}{
   author={Meyer, M.},
   author={Reisner, S.},
   title={Inequalities involving integrals of polar-conjugate concave
   functions},
   journal={Monatsh. Math.},
   volume={125},
   date={1998},
   number={3},
   pages={219--227},
   doi={10.1007/BF01317315},
}
\bib{Re}{article}{
   author={Reisner, Shlomo},
   title={Minimal volume-product in Banach spaces with a $1$-unconditional
   basis},
   journal={J. London Math. Soc. (2)},
   volume={36},
   date={1987},
   number={1},
   pages={126--136},
   doi={10.1112/jlms/s2-36.1.126},
}
\bib{SR}{article}{
   author={Saint-Raymond, J.},
   title={Sur le volume des corps convexes sym\'etriques},
   language={French},
   conference={
      title={Initiation Seminar on Analysis: G. Choquet-M. Rogalski-J.
      Saint-Raymond, 20th Year: 1980/1981},
   },
   book={
      series={Publ. Math. Univ. Pierre et Marie Curie},
      volume={46},
      publisher={Univ. Paris VI, Paris},
   },
   date={1981},
   pages={Exp. No. 11, 25},
}
\bib{Sc}{book}{
   label={Sc1},
   author={Schneider, Rolf},
   title={Convex bodies: the Brunn-Minkowski theory},
   series={Encyclopedia of Mathematics and its Applications},
   volume={151},
   edition={Second expanded edition},
   publisher={Cambridge University Press, Cambridge},
   date={2014},
   pages={xxii+736},
}
\bib{Sc2}{article}{
   label={Sc2},
   author={Schneider, Rolf},
   title={Smooth approximation of convex bodies},
   journal={Rend. Circ. Mat. Palermo (2)},
   volume={33},
   date={1984},
   number={3},
   pages={436--440},
   doi={10.1007/BF02844505},
}
\end{biblist}
\end{bibdiv}
\end{document}